\theoremstyle{plain}
\newtheorem{lemma}{Lemma}
\newtheorem{theorem}{Theorem}
\newtheorem{proposition}{Proposition}
\newtheorem{corollary}{Corollary}
\theoremstyle{definition}
\newtheorem{definition}{Definition}
\theoremstyle{remark}
\newtheorem*{remark}{Remark.}
\title{A Multiresolution approach to solve\\large-scale optimization problems}
\author{Rosa Donat\thanks{Universitat de Val\`encia, 50th Doctor Moliner street, Burjassot (Spain)
		(donat@uv.es, sergio.lopez-urena@uv.es). The authors have been supported by grant MTM2017-83942 funded by Spanish MINEC and by grant PID2020-117211GB-I00 funded by MCIN/AEI/10.13039/501100011033.}, Sergio L\'opez-Ure\~na$^*$}
\def\R{\mathbb{R}}
\newcommand{\mR}{\mathbb{R}}
\newcommand{\cD}{\mathcal{D}}
\DeclareMathOperator*{\argmin}{arg\ min}
\newcommand{\cC}{\mathcal{C}}
\newcommand{\z}{z}
\begin{document}

\maketitle

\begin{abstract} 
General purpose optimization techniques can be used to solve many problems in engineering computations, although their cost is often prohibitive when the number of degrees of freedom is very large. We describe a multilevel approach to speed up the computation of the solution of a large-scale optimization problem by a given optimization technique. By embedding the problem within Harten's Multiresolution Framework (MRF), we set up a procedure that leads to the desired solution, after the computation of a finite sequence of {\em sub-optimal solutions}, which solve auxiliary optimization problems involving a smaller number of variables. For convex optimization problems having {\em smooth} solutions, we prove that the distance between the optimal solution and each sub-optimal approximation is related to the accuracy of the interpolation technique used within the MRF and analyze its relation with the performance of the proposed algorithm. Several numerical experiments confirm that our technique provides a computationally efficient strategy that allows the end user to treat both the optimizer and the objective function as black boxes throughout the optimization process.

\end{abstract}

%

\section{Introduction}
 Optimization problems  of the type
\begin{equation} \label{eq:problema}
\textrm{Find }\z_{\min} \in \mR^{N} \textrm{ such that } F(\z_{\min})=
\min_{\z \in \mR^{N}} F(\z), \qquad N>>1,
\end{equation}
 arise frequently in applications, often  in connection with the
 context of calculus of variations,  
PDE constrained optimization, optimal control \cite{Brandt77,Briggs87,Hackbusch80} or
image
processing \cite{CC06,CC10}.

In many occasions,  specially in  engineering computations,  it is
known, a priori, that the problem has a unique solution 
which  can  be computed (from a given initial data)  with an appropriate
`off-the-shelf' (black-box type) 
optimization technique which, however,  tends to be very time consuming.
Assuming that  problem \cref{eq:problema}  can be solved by using a
given  optimization technique, ${\cal D}$,  we describe a strategy to 
reduce the
computational time required by the direct application  of $\cD$ on a
space with $N$ degrees of freedom. Our technique is  based on
embedding problem \cref{eq:problema}  in Harten's  
Multiresolution Framework  (MRF henceforth) \cite{Harten96},  which  can be  loosely described as a  set of mathematical tools to
obtain multilevel representations of discrete data sets, similar to
those obtained in the well known wavelet framework. 

Harten's MRF  has been
successfully applied for different purposes and in several
scenarios (see e.g. 
\cite{DL-U17,ACG14,AMR13,CKMP03,CD01,BS99})  but, to
the best of our knowledge,  the applications  carried out in
\cite{DL-UM16,L-UT-LDG-A-C18} constitute the first attempt to  use
Harten's MRF in connection  with constrained/unconstrained
optimization.  The aim of this paper is to provide a complete mathematical
description of the technique used in \cite{DL-UM16,L-UT-LDG-A-C18} to
solve large-scale optimization 
problems of the type \cref{eq:problema}, together with some
theoretical results on its properties and  performance, including several  numerical
experiments that confirm our theoretical analysis. Our technique will be
referred throughout the paper as 
MR/OPT (for {\em Multiresolution for Optimization}).

\subsection{Related Work}

The use of multilevel techniques to solve large-scale optimization problems has
been a common trend in the literature since the mid 80's. 
Largely fueled by  the success of multigrid methods as  efficient solvers for
discretized partial differential 
equations (PDE) \cite{Brandt77,Briggs87,Hackbusch80})
and the fact that
many large-scale optimization 
problems arise from the discretization of a PDE
\cite{Nash00,CPT13,CWY18,FP14,FP15,GST08}, 
there are nowadays many multilevel
algorithms which exploit the fact that, if \cref{eq:problema} comes from an underlying  infinite dimensional problem,  it  may be described at several
discretization levels.

The  idea behind  many of these multilevel optimization techniques
is to combine an iterative (or descent) method with a correction which
requires the 
solution of a reduced, lower dimensional, auxiliary problem which is
cheaper to solve. For example, in the 
MG/OPT framework introduced by Nash in \cite{Nash00}, the solutions of
low-dimensional problems ({\em coarse resolution} problems) are used
to compute search directions for higher dimensional problems in a
multigrid fashion, an idea that has been considered by Frandi and
Papini in a series of papers \cite{FP14,FP15} where derivative-free multilevel
optimization techniques are considered.  Other   techniques, such as the {\em trust-region} multilevel optimization also make use of   auxiliary, lower
dimensional, optimization problems
(or {\em surrogate models} \cite{FP14,FP15})
which make an explicit use of the family of discrete
versions, at different resolution levels, associated to  the {infinite-dimensional/continuous} problem.

The present paper describes  a  different strategy to solve
\cref{eq:problema}, which is close, in spirit, to the 
so-called {\em `cost-effective'} multiscale strategy used in
\cite{CD01}, whose objective is to reduce the cost of  using a
time-consuming numerical scheme on a  fine-mesh
simulation involving hyperbolic PDEs.  Our aim is not to design a new optimization
algorithm, but rather to speed up the computation carried out 
 by a convenient (but possibly very time-consuming)  optimization technique, 
starting at a  given initial guess provided by the
end-user. 

In the  MR/OPT algorithm,  the function $F$ in
\cref{eq:problema} and the optimization tool are treated as black boxes, with the only requirement that ${\cal D}$ is appropriate  to solve \cref{eq:problema}  (except for  the possibly very large computational expense).
  Its design   involves the  
computation of  a sequence of iterates, which solve  {\em auxiliary optimization problems}
in $\R^m$ ($m<N$)  which do not make  use of  any 
discretization of the underlying infinite-dimensional problem at
lower  resolution levels (in contrast with
\cite{CPT13,FP14,FP15,GST08}). Somewhat related multilevel optimization strategies can be found in e.g.  \cite{DL-UM16,HZ14,MTRA17} (and references therein)  in the context of aerodynamic shape optimization, 
where it is now   recognized that a  shape optimization problem can be solved as a sequence of optimization steps that evolve from a basic parametrization of the shape to more refined parametrizations. These techniques are capable of  
 implementing  geometry changes while maintaining the required constraints and obtaining an improvement in the efficiency, accuracy, and robustness of widely used optimization procedures. As in the MR/OPT strategy, they are not based on discretizing the underlying infinite dimensional problem at several accuracy levels.

The paper is  
organized as follows: In \cref{sec:dmf} we briefly recall the main
ingredients of Harten's MRF required for the description of the MR/OPT
strategy, which is carefully described  in \cref{sec:MSO}. In \cref{sec:1D-int}, we
describe the
interpolatory MR setting, including the specific prediction schemes used in the  numerical
examples of \cref{sec:numex}, chosen to illustrate the performance and
limitations of our technique. We finish with some conclusions and perspectives in \cref{sec:conclusions}.

\section{Harten's Framework For Multiresolution} \label{sec:dmf}

In this section we shall recall  the essential ingredients of
Harten's MRF which are required for the description of our
algorithm (for a complete description of Harten's MRF see  e.g.
\cite{Harten96,AD00,ADH98a}).

A distinctive characteristic of Harten's approach  is that the discrete data
at different resolution levels in a specific MRF are
assumed to come from a particular {\em discretization} procedure, 
acting on an appropriate functional space, which  is
`naturally'  linked to a  hierarchy of {\em nested
  meshes}\footnote{$\{{\cal G}^k\}_k$ is nested if 
	${\cal G}^k \subset {\cal G}^{k+1}$} $({\cal
        G}^k)_{k=0}^L$ covering  a fixed   spatial
domain.

Assuming that $N_k$ is the dimension of  the discrete data attached to
${\cal G}^k$, where  $N_L>N_{L-1}> \cdots > N_0 $,
a multiresolution (MR) representation of a
discrete data set, $\z^L\in \R^{N_L}$, is an {\em  equivalent
	representation} (in $\R^{N_L}$)
which consists in 
a {\em coarse} representation of this data set,
$\z^0 \in \R^{N_0}$, together 
with a sequence of {\em details}, $d^\ell \in \R^{N_{\ell+1}- N_\ell}$, $\ell=0, \cdots, L-1$, that can be
understood as `essential' (non-redundant) difference in information between con\-se\-cu\-tive
resolution levels.

In a nutshell, Harten's MRF can be described as follows:
For  each pair of consecutive resolution levels, we are given a {\em
  decimation operator} $D_k^{k-1}$, which extracts  $k-1$ level
information from the 
discrete data at level $k$, and a {\em prediction operator} $P_{k-1}^k$
which produces $k$ level data from $k-1$ level
information. $D_k^{k-1}$ is a  linear operator, fully  determined by the underlying
discretization procedure that defines the framework.  $P_{k-1}^k$ may
be linear or nonlinear\footnote{This is a major difference between Harten's
  MRF and the classical wavelet theory.}. These two
operators are required to satisfy a {\em consistency relation}   at all resolution levels (I$_\ell$ is the identity operator on $\R^{N_\ell}$):
\begin{equation} \label{eq:consist-g}
D_k^{k-1} P_{k-1}^k  = \textrm{I}_{k-1},
\end{equation}
(but $ P_{k-1}^k D_k^{k-1} \neq \textrm{I}_{k}$). Notice that, for
$z^k \in \R^{N_k}$ we can write  
\[ z^k = P_{k-1}^k D_k^{k-1} z^k + (\textrm{I}_{k} - P_{k-1}^k
D_k^{k-1}) z^k =  P_{k-1}^k z^{k-1} + e^{k},\]
 hence each  $z^k \in \R^{N_k}$ can be {\em represented}
in terms of a `decimated' version (at the next, coarser, resolution
level)
together with a `prediction error', i.e.
\begin{equation} \label{eq:zk-ek}
\z^k \quad \equiv \quad \begin{cases}
z^{k-1}&\!\!\!:= D_{k}^{k-1} z^k \in \R^{N_{k-1}}, \\
e^{k}&\!\!\!:= z^k - P_{k-1}^k z^{k-1} \in \R^{N_k}. \end{cases}\end{equation}
The consistency relation \cref{eq:consist-g} implies that
$D_{k}^{k-1} e^{k}=0$, so that the
representation of $z^k$ in terms of  $( z^{k-1}, e^{k}) $ specified in
\cref{eq:zk-ek}  (with $N_{k-1}+N_k$ components)  is
redundant.  The {\em details}  $d^{k-1}$ represent  exactly the
non-redundant 
information contained in $e^k$, so that a
one-to-one correspondence between $z^k$ and  $(z^{k-1}, d^{k-1})$ can
be established (hence  $d^{k-1} \in \R^{N_k-N_{k-1}}$). By iterating this 
procedure from level $L$ to level $1$, we obtain the full MR
representation of $z^L\in \R^{N_L}$:
\begin{equation} \label{eq:full-MR-g}
z^L \leftrightarrow (z^{L-1},d^{L-1}) \leftrightarrow
(z^{L-2},d^{L-2},d^{L-1}) \leftrightarrow \cdots \leftrightarrow
(z^0, d^0, \cdots, d^{L-1}).
\end{equation}
In \cref{eq:full-MR-g}, $z^{k}=D_{k+1}^{k} \cdots D^{L-1}_L z^L \in \R^{N_k}$,   $L > k\geq 0$, i.e. $z^k$ are  coarse  versions
of $z^L$  at the different resolution  levels,  obtained by successive
decimation. Moreover, the details  $d^{k-1}$ are related to the prediction error $e^k=
(I- P_{k-1}^k D_{k}^{k-1} ) z^k$,
and  provide information on the local smoothness of the function which is
associated to the discrete data $z^L$ by the discretization operator
which defines the framework (see  also \cref{sec:1D-int}).

If we denote by  $M_{m,j}$ ($m<j$)  the
multiresolution transform 
o\-pe\-ra\-tor that acts between the resolution levels 
$m$ and $j$, then, for $0 \leq \ell \leq k \leq L$
and  $\forall z^L \in \R^{N_L}$,
\begin{equation} \label{eq:M0L-cases-0} 
M_k^Lz^L =(z^{k}, d^{k}, \cdots d^{L-1})  \quad \rightarrow \quad 
M_{\ell}^L z^L = (M_{\ell}^k z^k, d^{k}, \cdots d^{L-1}),
\end{equation}
or, equivalently, 
\begin{equation} \label{eq:M0L-cases}
M_{\ell,L} z^L = (z^\ell, d^\ell, \cdots,d^{k-1},d^{k}, \cdots ,
d^{L-1}) \,  \equiv \, 
\begin{cases}
M_{k,L} \, z^L= (z^{k}, d^{k}, \cdots d^{L-1}), \\
M_{\ell,k} z^{k} =(z^\ell,d^\ell, \cdots, d^{k-1}).
\end{cases}
\end{equation}
We shall use these relations in various places throughout the paper.

An important property for the practical use of   MR  transformations is their  {\em
  stability} with respect to {\em perturbations}.  In this paper, we shall assume that the prediction operators    $P_{k-1}^k:\R^{N_{k-1}}
        \rightarrow \R^{N_k}$  are linear (they can be 
	represented as $\R^{N_k \times N_{k-1}}$ matrices) which implies in turn that the
      multiscale transformations
         $M_{m,j} : \R^{N_j} \rightarrow
        \R^{N_j}$ are also a linear  operators ($\R^{N_j \times N_j}$
        matrices). In this case, the stability of the   associated     MRF  can be stated as
        follows 
\begin{equation} \label{eq:M-stab}
||M_{m,j}||_\infty \leq C, \qquad ||M_{m,j}^{-1}||_\infty \leq
\tilde C,
\end{equation} with $C, \tilde C$ independent of $ m,j$ or  $L$, for  $0\leq m \leq
j \leq L$, and $||\cdot||_\infty$ the usual operator norm.

Within Harten's framework,  stability is  related to the  {\em convergence} of the process that
results from the recursive application of the  prediction
operators, which involves a refinement process  akin to those found
in the theory of subdivision refinement\footnote{The convergence theory of
these processes is outside the scope of this paper. The reader is referred to
\cite{Dyn92} for the concept of 
convergence in subdivision refinement.}. 
The prediction operators considered in this paper satisfy a  property which  guarantees the stability of the associated MR transformations. This property, which we shall denote as {\em Property S}, is defined below. 
\begin{definition}[Property S] Let {${\cal P}=\{P_k^{k+1}\}_{k=0}^\infty$} be a
  sequence of linear prediction operators. We will say that ${\cal P}$
  satisfies {\em property S} if 
  there exist  $d_1, d_2>0$, such that for any $0\leq \ell<k$
\begin{equation} \label{eq:d1-d2}
 d_1|| z^\ell ||_\infty \leq || P_\ell^k z^\ell ||_\infty \leq d_2
 ||z^\ell||_\infty, \qquad \forall z^\ell\in\R^{N_\ell},
\end{equation}
where $P_\ell^k := P_{k-1}^k \cdots P_{\ell+1}^{\ell + 2}  P_\ell^{\ell + 1} $ is the operator that computes the recursive prediction process between levels $l < k$.
\end{definition}

\begin{remark} We shall assume that $P_{k}^k = I_k$. Notice that  
\begin{equation} \label{eq:Sk-M0k}
z^L= P_k^L z^k \quad \Leftrightarrow  \quad M_{k,L} z^L= (z^k,0, \ldots, 0).
\end{equation}
\end{remark}

\subsection{Harten's  Interpolatory MRF} \label{sec:1D-int}

In this paper, we shall use Harten's  interpolatory MR framework, which
is characterized by the assumption that the discrete data correspond
to the point-evaluations  of a function, defined on the   spatial
domain underlying the nested mesh structure. 

We briefly describe next the one-dimensional case:
Let us consider the sequence of nested meshes on the interval $[0,1]$
obtained by the dyadic refinement of a uniform mesh ${\cal G}^0$, with grid spacing
$h_0=1/J_0$. Thus, ${\cal G}^k=\{ i
h_k\}_{i=0}^{J_k}$ with $h_k=h_0/2^k=1/J_k$ and  the $k$-th level information consists in
$N_k=J_k+1$ discrete values, for each $k$, $0 \leq k \leq L$.

The operators that define 
the transfer of information between two consecutive  resolution levels
($k$ fine, $k-1$ coarse) in the 1D interpolatory MRF are  as follows: \\[2pt]
\noindent $\bullet$  {\em Decimation} by {\em downsampling}:  For $ z^k  \in
\R^{N_k}$,
\begin{equation} \label{eq:deci-def} z^{k-1}_i = (D_{k}^{k-1}
z^k)_{i}=z^k_{2i}, \quad 0 \leq i \leq J_{k-1}. 
\end{equation}
Notice that this decimation operator corresponds to the interpretation
of the discrete data as the  
point-values of an underlying function defined on  the unit interval
$[0,1]$ at each resolution level. 
Obviously $z^{k-1} \in
\R^{N_{k-1}}$ is a {\em	coarse} version of the  data $z^k$, which  corresponds to the
point-values of the function on the  grid ${\cal
	G}^{k-1}$. \\[2pt]
\noindent $\bullet$  {\em Prediction} by {\em interpolation}.
Because of \cref{eq:deci-def}, the 
consistency relation \cref{eq:consist-g} becomes 
\[  z^{k-1}_i= (D_k^{k-1} P_{k-1}^k z^{k-1})_i = (P_{k-1}^k z^{k-1})_{2i}.\]
This  {\em  interpolation
condition} implies that the prediction errors at even points are zero,
thus, only
the predicted values at odd grid points need to be 
computed and stored. 
Then, the   one-to-one correspondence between the discrete
sets $z^k$ and $(z^{k-1},d^{k-1})$, i.e. the two-level multiresolution
transformation $M_{k-1,k}$ and its inverse, are
\begin{equation} \label{eq:M-k-1-k}
M_{k-1,k} z^k = (z^{k-1}, d^{k-1}) \ \leftrightarrow 
\begin{cases} z^{k-1}_{i}\!\!\!\!\!\!&= z^k_{2i}, 
\, \, 0 \leq i \leq J_{k-1},\\
d_i^{k-1}\!\!\!\!\!\!&=z^k_{2i-1} - (P_{k-1}^k z^{k-1})_{2i-1},
\, \, 1 \leq i \leq J_{k-1}, \end{cases}
\end{equation}
\begin{equation} \label{eq:M-k-1-k-inv}
M_{k-1,k}^{-1} (z^{k-1}, d^k) = z^k \  \leftrightarrow 
\begin{cases}
z^k_{2i}\!\!\!\!\!\!&= z^{k-1}_{i},   \, \, 0 \leq i \leq J_{k-1},\\
z^k_{2i-1}\!\!\!\!\!\!&= d_i^{k-1}+ (P_{k-1}^k z^{k-1})_{2i-1}, \, \, 1 \leq i \leq J_{k-1}.
\end{cases}
\end{equation}
\begin{remark} \label{rem:d-zL}
From \cref{eq:M-k-1-k},  since $z^{k-1}= D_{k}^{k-1} z^k$, $z^k= D_{k+1}^k
\cdots D_L^{L-1} z^L$, we can also write $d_i^{k-1}=d_i^{k-1}(z^L)$, which
clearly express that the detail coefficients are determined by the
properties of the discrete data at the finest resolution level.
 \end{remark}

In this paper, we shall consider prediction operators based on  certain piecewise
polynomial interpolatory techniques of fixed polynomial degree, $n$,
odd. We refer the reader to \cite{ADH98a, DGH03} for specific details,
and simply provide the   prediction rules for $n=1,3,5$
\begin{equation} \label{eq:Pkdef}
\begin{cases}
(P_{k-1}^k z^{k-1})_{2i-1}\!\!\!&=\sum_{\ell=1}^{(n+1)/2} \beta_\ell
(z^{k-1}_{i+\ell}+ z^{k-1}_{i+1-\ell}), \quad \frac{n+1}2 \leq i \leq J_{k-1}-\frac{n-1}2, \\
(P_{k-1}^k z^{k-1})_{2i}\!\!\!&=z^{k-1}_i, \quad 0 \leq i
\leq J_{k-1},
\end{cases}
\end{equation}
\begin{equation} \label{eq:n-coeff}
\begin{cases} n=1 \Rightarrow & \beta_1=1/2, \\
n=3 \Rightarrow & \beta_1=9/16, \, \beta_2=-1/16, \\
n=5 \Rightarrow & \beta_1=150/256, \, \beta_2=-25/256, \, \beta_3=
3/256, \end{cases} 
\end{equation}
with  the
following  {\em special boundary  rules} at the left boundary of $[0,1]$ for $n=3$,
\begin{equation}  \label{eq:b-predn3}
(P_{k-1}^k z^{k-1})_{1}= \frac{5}{16} z^{k-1}_0 + \frac{15}{16} z^{k-1}_1
-\frac{5}{16}z^{k-1}_2+ \frac1{16} z^{k-1}_3,
\end{equation}
and  $n=5$,
\begin{equation}  \label{eq:b-predn5} 
\! \begin{cases}
\!\!(P_{k-1}^k z^{k-1})_{1}\!\!\!\!\!\!&=
\frac{63}{256}z^{k-1}_0\!+\frac{315}{256}z^{k-1}_1\!-\!\frac{105}{128}z^{k-1}_2\!+\!\frac{63}{128}z^{k-1}_3\!-\!\frac{45}{256}z^{k-1}_4\!+\!\frac{7}{256}z^{k-1}_5,
\\[6pt]  
\!\!(P_{k-1}^k
z^{k-1})_{3}\!\!\!\!\!\!&=-\frac{7}{256}z^{k-1}_0\!+\!\frac{105}{256}z^{k-1}_1\!+\!\frac{105}{128}z^{k-1}_2\!-\!\frac{35}{128}z^{k-1}_3\!+\!\frac{21}{256}z^{k-1}_4\!-\!\frac{3}{256}z^{k-1}_5.
\end{cases}
\end{equation}
 The  corresponding rules at the right end of $[0,1]$  can be obtained
 by symmetry. We prove in the Appendix that these prediction operators
 satisfy {\em property S} with $d_1=1$, hence, the associated MR transformations are stable, i.e. satisfy \cref{eq:M-stab}.

\begin{remark} The  prediction operators defined by \cref{eq:Pkdef} are  related to the  well known (stationary\footnote{Stationary subdivision rules are
	independent of the refinement level.})  subdivision rules of the interpolatory Deslauries-Dubuc
subdivision schemes \cite{Dyn92,DGH03} for infinite sequences. To deal with bounded domains, we have to take care of the, possibly special, prediction rules close to the boundaries, where the required centered stencils might leave the domain.  For $n=1$, i.e. linear
interpolation, only   the boundary values  $z^{k-1}_0, z^{k-1}_{N_{k-1}}$ are required to
apply the prediction operator  specified in \cref{eq:Pkdef} at each
resolution level, hence no special rules are required close to the boundaries of the domain. However, for  $n=3$ (or $n=5$) interpolation polynomials of degree 3 (or 5) are
involved, hence special rules are required to predict
$k$-data from  data from $(k-1)$-th grid values at the positions
$i=1$ and $i=J_{k-1}-1$  (or $i=1,2$, $i=J_{k-1}-1, J_{k-1}-2
	$ for $n=5$). To obtain \cref{eq:b-predn3,eq:b-predn5}, we
        have used the stencil of the $n+1$ points closest to the
boundary, in order to obtain the required predicted value
\cite{ADH98a}.  Hence, all polynomial pieces in the interpolatory
reconstruction process have the same degree (and the same
approximation order).
\end{remark}

The decay of the scale coefficients with respect to the resolution
 level under smoothness assumptions, is a well known fact that is
 very simple to explain in the interpolatory MRF.  Since the details
are interpolation errors, their size (and behavior across
scales) is determined by the approximation order of the interpolatory technique
used to define the prediction operators.
  The rules  in   \cref{eq:Pkdef,eq:n-coeff,eq:b-predn3,eq:b-predn5},
  involve polynomials of degree $n$, hence
\begin{equation} \label{eq:dk-interp}
 d^{k-1}_i = {\cal O}(h_{k}^{n+1} ).
\end{equation}
when the  data  around the spatial location corresponding
to the indices {$(2i-1,k)$} correspond to the point values of a
sufficiently smooth function.

In this paper, we shall make use of the 1D interpolatory MRF just
described and its 2D version obtained by using a tensor-product
approach \cite{BH97}. When
using the 1D prediction operators specified above in the 2D
tensor-product context, the prediction operators have the same
order of approximation, with respect to the
uniform mesh spacing of the (tensor product) hierarchical mesh
structure, as in the 1D setting. Hence, the decay of the scale coefficients in the 2D tensor-product interpolatory MRF is also given by \cref{eq:dk-interp}.

\section{ MR/OPT: A multiresolution approach for the efficient solution
  of  large-scale  optimization problems} 
\label{sec:MSO}

We shall describe next a multilevel strategy to compute the solution of
\cref{eq:problema}, assuming that the end-user provides an  initial
guess $\bar z$ and a preferred  optimization tool ${\cal D}$ so that
\begin{equation} \label{eq:problema-MR}
\quad  z_{\min} = \cD(\bar z, F, N)= \argmin\{ F(z), \, \, \,  z \in \R^{N}\}.
\end{equation}
The notation above
is meant to emphasize the fact 
that the optimization problem \cref{eq:problema},  with  $F$ as the objective
function on a  discrete  space with  $N$ degrees of freedom,  can be
solved using $\cal D$ as  optimization  tool  and  $\bar z$ as
the initial guess. 

Considering $N=N_L$ as the finest resolution level in a
sequence of $L+1$ levels of refinement 
corresponding to discrete spaces $\R^{N_k}$ (
$N_0<N_1<\cdots<N_L=N$, $L>0$) within   a MRF which is {\em appropriate} for the
given problem, the MR/OPT strategy   produces  a 
sequence of {\em `sub-optimal' solutions},  $\{z^{L,k}\}_{k=0}^{L+1}$,
satisfying 
\begin{equation}  \label{eq:Dalphas2}
z^{L,0} := \bar z, \quad F(\bar z)=F(z^{L,0}) \geq F(z^{L,1}) \geq \cdots 
\geq F(z^{L,L+1}), \quad  z^{L,L+1}= z_{\min}. 
\end{equation}
The $k$-th level sub-optimal solution satisfies 
\[ F(z^{L,k+1}) = \min \{ F(z), \, z  \in \Xi_k \} \]
where $\Xi_k $ is a subset 
of $\R^N$, whose construction depends on
the MRF being used and on the sequence of prediction operators ${\cal
  P}$. 

In this paper we always assume that the  prediction operators
in ${\cal P}$ are linear and satisfy {\em property S}. Hence the MR
transformations used are  also linear operators and \cref{eq:M-stab}
is satisfied. 

At the coarsest resolution level ($k=0$) we
set $z^{L,0}:=\bar z$, the  initial guess 
provided by the user. Its  MR representation, $ M_{0,L} z^{L,0}=M_{0,L} \bar z=:
(\bar z^0,\bar d^0, \ldots, \bar d^{L-1})$, serves to define the set
$\Xi_0\subset \R^{N_L}$ and the  associated 
$0$-th-level {\em objective function}
as  follows
\begin{align*}
\Xi_0 :=&\{ M_{0,L}^{-1} (\bar z^0+ \varepsilon^0, \bar d^0, \ldots, \bar
d^{L-1}), \, \,  \varepsilon^0 \in R^{N_0}\}, \\
F_0(\varepsilon^0):=&F(M_{0,L}^{-1}(\bar
z^0+ \varepsilon^0, \bar d^0, \bar d^1, \ldots, \bar d^{L-1})), &
\varepsilon^0 \in \R^{N_0}. 
\end{align*} 
Since  the MR operators are linear, taking into account
\cref{eq:Sk-M0k}, we can write
\[ M_{0,L}^{-1}(\bar z^0 + \varepsilon^0, \bar d^0, \bar
d^1, \ldots, \bar d^{L-1})=
\bar z + P_0^L \varepsilon^0. \]
Hence 
$\Xi_0$ is an affine space  with
$N_0$ degrees of freedom, which 
 is   defined by  considering
only  perturbations (of the  MR representation) of the 
initial data at the coarsest resolution level. These perturbations  are moved to
$\R^{N_L}$
by successive prediction. 

 We then
compute (assuming it is feasible\footnote{We shall see that this is
  indeed the case for convex optimization problems.}) 
\begin{equation} \label{eq:0-aux}
\varepsilon^0_* := \argmin \{ F_0(\varepsilon^0), \quad \varepsilon^0
\in \R^{N_0}\} = {\cal D}(0,F_0, {N_0}),
\end{equation}
and define the  {\em sub-optimal solution at level $0$}  as
\begin{equation} \label{eq:zL1-def}
z^{L,1}:=\bar z+ P_0^L \varepsilon^0_* =  z^{L,0}+ P_0^L\varepsilon^0_* .
\end{equation}
 Obviously
\[z^{L,1}= \argmin\{ F(z), \, \, z \in  \Xi_0 \}, \quad F(z^{L,1}) = F_0(\varepsilon^0_*) \leq F_0(0) = F(z^{L,0})= F(\bar z).\]
  Notice that the computation of $z^{L,1}$ can be carried out involving
only direct calls to the  objective function $F$, since $ F_0(\varepsilon^0)=F(\bar z+ P_0^L
\varepsilon^0)$. {Notice that the user only needs to provide $F$, and the MRF takes care of  the definition of   the  objective function $F_0$.} In addition, 
  considering $\varepsilon^0= 0 \in \R^{N_0}$ as the initial guess to
solve  \cref{eq:0-aux} (the {\em $0$-th level auxiliary optimization problem})
corresponds to considering $z^{L,0}=\bar z$ as the `corresponding' initial guess  in the
affine space $\Xi_0 \subset \R^{N}$. 
Notice also that 
\[ M_{0,L} z^{L,1}=
M_{0,L} \bar z + M_{0,L} P_0^L \varepsilon^0_*=
(\bar z^0+ \varepsilon^0_*, \bar d^0,\cdots, \bar d^{L-1}), \]
 i.e.
the difference between $z^{L,0}$ and  $z^{L,1}$  occurs only at
the coarsest resolution level, while  the `details'  in their MR representations
coincide  at all higher resolution levels. 
Taking into account  \cref{eq:M0L-cases-0}, we can write
\[ M_{1,L} z^{L,1}= (z^1_*, \bar d^1, \cdots,\bar d^{L}), \quad
M_{0,1} z^1_* = (\bar z_0+\varepsilon^*_0,\bar d^0) \, \equiv
\, z^1_*= M_{0,1}^{-1}(\bar  z_0,\bar d^0) + P_0^1 \varepsilon^0_* \] 
so that  we can repeat the process as if the level 1 was the coarsest resolution level. 

The general $k$-th step
of the algorithm,  $1 \leq k \leq L$, which will provide the {\em
  sub-optimal solution} $z^{L,k+1}$,  can be described as follows:  We assume that we have obtained
$\varepsilon_*^{m} \in \R^{N_m}$, $0 \leq m \leq k-1$ (the solution of
the $m$-th level auxiliary optimization  problem) and the associated  sub-optimal
solutions, $\{z^{L,m}\}_{m=1}^k$, which satisfy the
following properties  ($ \bar z=z^{L,0}  $)
\[z^{L,m}=z^{L,m-1}+P_{m-1}^L \varepsilon_*^{m-1}; \quad  M_{m,L}
z^{L,m}=(z^m_*,\bar d^{m},\ldots, \bar d^{L-1}), \quad  1 \leq m \leq  k, \, \, \]
 where {\(z^0_*=\bar z^0\) and } 
\[  \begin{cases} 
 z^m_* \! \! &= M_{m-1,m}^{-1}(z_*^{m-1}, \bar d^{m-1})
+ P_{m-1}^m \varepsilon^{m-1}_*, \\
\bar d^j \! \! &=d^j(\bar z), \, \, m-1 \leq j \leq L-1.  \end{cases}\]
Then, we proceed as follows: \\[3pt]
\noindent{\bf 1.} Consider the $k$-th level  space of approximation
	\begin{equation} \label{eq:Xik-def}
 \Xi_k:=\{ M_{k,L}^{-1}(z^k_*+ \varepsilon^k, \bar d^k, \cdots, \bar
	d^{L-1}), \, \, \varepsilon^k \in \R^{N_k}\}=
	z^{L,k} +\{  P_k^L\varepsilon^k, \, \, \varepsilon^k \in   \R^{N_k} \} 
\end{equation}
	which is an affine space in $\R^N$  with  ${N_k}$ degrees of
        freedom.

\noindent{\bf 2.} Define the $k$-th level  objective function
	$F_k: \R^{N_k}
	\rightarrow \R$, 
	\begin{equation} \label{eq:aux-Fk}
	F_k(\varepsilon^k):= 
	F(M_{k,L}^{-1}(z^k_*+\varepsilon^k, \bar d^{k},\ldots, \bar
	d^{L-1}))=F(z^{L,k}+ P_k^L \varepsilon^k).
	\end{equation}
{Notice that only $F$ and ${\cal P}=\{P_k^{k+1}\}_k$ is required to define $F_k$.}

\noindent{\bf 3.} Assuming that it is feasible,  compute the solution to the $k$-th
	level {\em auxiliary optimization problem}
	\begin{equation} \label{eq:alphaLk}
	\varepsilon^k_* := \argmin \{ F_k(\varepsilon^k),\, \, \, \varepsilon^k
	\in \R^{N_k}\} = {\cal D}(0,  F_k, N_k).
	\end{equation}
\noindent{\bf 4. } Define
	\begin{equation} \label{eq:k-auxP}
	\quad  z^{L,k+1}:=z^{L,k}+ P_k^L \varepsilon^k_* .
	\end{equation}
Then, $ z^{L,k+1}=\argmin\{ F(z), \, \, z \in
\Xi_k \}$, $ F(z^{L,k+1}) =F_k(\varepsilon^k_*) \leq F_k(0)  =
F(z^{L,k})$ and 
\begin{equation} \label{eq:MkL-zLk1}
 M_{k,L}z^{L,k+1}= M_{k,L}z^{L,k}+ M_{k,L} P_k^L \varepsilon^k_*=
(z^k_*+\varepsilon^k_*,  \bar d^{k}, \ldots,\bar d^{L-1})  
\end{equation}
hence
\begin{align}
&d^i(z^{L,k+1})=\bar d^i= d^i(\bar z), \   k \leq i\leq L-1,  \label{eq:dk-equal}\\ 
&M_{k+1,L} z^{L,k+1} =(z^{k+1}_*, \bar d^{k+1}, \cdots,
\bar d^{L-1}), \quad z^{k+1}_* :=M_{k,k+1}^{-1}(z^k_*, \bar d^k)
+ P_k^{k+1} \varepsilon^k_*  . \label{eq:Mk-1} \end{align}

We can easily prove the following properties of  the affine spaces
$\Xi_k$.
\begin{lemma} \label{lemma:contained} The spaces $\Xi_k$ in
  \cref{eq:Xik-def} satisfy
\begin{enumerate}
\item 	$ \Xi_{k} \subset \Xi_{k+1}, \quad  0\leq  k < L.$ \label{lem:xi-props1}
\item $ \Xi_{k} = \{z^{L,\ell} + P_k^L\varepsilon^k \ : \
  \varepsilon^k\in\R^{N_k}\}, \quad  \forall \ell, 0\leq\ell\leq k\leq
  L.$ \label{lem:xi-props2}
\end{enumerate}
\end{lemma}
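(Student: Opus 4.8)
The plan is to prove both items directly from the definition \cref{eq:Xik-def}, exploiting the two equivalent descriptions of $\Xi_k$ given there: the "MR-representation" form $\{M_{k,L}^{-1}(z^k_*+\varepsilon^k,\bar d^k,\dots,\bar d^{L-1})\}$ and the "affine-space" form $z^{L,k}+\{P_k^L\varepsilon^k\}$. The key structural facts I will lean on are: (i) the prediction operators compose, $P_k^L = P_{k+1}^L P_k^{k+1}$ and more generally $P_\ell^L = P_k^L P_\ell^k$ for $\ell\le k\le L$ (this is just the definition $P_\ell^k:=P_{k-1}^k\cdots P_\ell^{\ell+1}$ together with $P_k^k=I_k$); (ii) the update relation $z^{L,k+1}=z^{L,k}+P_k^L\varepsilon^k_*$ from \cref{eq:k-auxP}; and (iii) the identity $M_{k,L}^{-1}(w^k,0,\dots,0)=P_k^L w^k$ coming from \cref{eq:Sk-M0k}, which lets me pass freely between the two descriptions.

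For item 2, I would first observe that for $\ell=k$ the claim $\Xi_k=\{z^{L,k}+P_k^L\varepsilon^k:\varepsilon^k\in\R^{N_k}\}$ is exactly the second equality already recorded in \cref{eq:Xik-def}. To handle general $\ell\le k$, I would show that $z^{L,\ell}$ and $z^{L,k}$ differ by an element of the subspace $\{P_k^L\varepsilon^k:\varepsilon^k\in\R^{N_k}\}$; then adding that subspace to either base point yields the same affine space. Concretely, iterating \cref{eq:k-auxP} gives
\[
z^{L,k}=z^{L,\ell}+\sum_{m=\ell}^{k-1} P_m^L\varepsilon^m_*,
\]
and by the composition property (i), $P_m^L\varepsilon^m_* = P_k^L\bigl(P_m^k\varepsilon^m_*\bigr)\in P_k^L(\R^{N_k})$ for each $m$ with $\ell\le m\le k-1$ (using $P_m^k$ as defined, with $P_k^k=I_k$ for the term $m=k-1$ handled via $P_{k-1}^k$). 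Hence $z^{L,k}-z^{L,\ell}\in P_k^L(\R^{N_k})$, so $z^{L,\ell}+P_k^L(\R^{N_k}) = z^{L,k}+P_k^L(\R^{N_k})=\Xi_k$, which is item 2.

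Item 1 is then immediate from item 2: take $\ell=k$ in the description of $\Xi_{k+1}$, i.e. $\Xi_{k+1}=\{z^{L,k}+P_{k+1}^L\varepsilon^{k+1}:\varepsilon^{k+1}\in\R^{N_{k+1}}\}$ (using item 2 with the pair $(k,k+1)$ in place of $(\ell,k)$, which is legitimate since $k\le k+1$). Given any point $z^{L,k}+P_k^L\varepsilon^k\in\Xi_k$, write $P_k^L=P_{k+1}^L P_k^{k+1}$ by (i), so the point equals $z^{L,k}+P_{k+1}^L(P_k^{k+1}\varepsilon^k)$, which has exactly the form of an element of $\Xi_{k+1}$ with $\varepsilon^{k+1}:=P_k^{k+1}\varepsilon^k$. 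Therefore $\Xi_k\subseteq\Xi_{k+1}$.

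The only mild subtlety — and the one place I would be careful — is the bookkeeping on the composition of prediction operators and the convention $P_k^k=I_k$: one must check that $P_m^L=P_k^L P_m^k$ really holds for all $\ell\le m\le k\le L$, including the degenerate endpoints, so that the telescoped sum for $z^{L,k}-z^{L,\ell}$ lands in $P_k^L(\R^{N_k})$. This is purely definitional (it follows by unwinding $P_m^L=P_{L-1}^L\cdots P_m^{m+1}$ and splitting the product at level $k$), so I expect no real obstacle; the argument is otherwise a short chain of substitutions, and both parts follow from item 2 once the composition identity is in hand.
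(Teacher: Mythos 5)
Your proposal is correct and rests on exactly the same two ingredients as the paper's proof: the composition identity $P_\ell^L=P_k^L P_\ell^k$ and the update relation $z^{L,k+1}=z^{L,k}+P_k^L\varepsilon^k_*$, combined with the affine structure of the $\Xi_k$. The only difference is cosmetic: the paper proves item~1 first (one-step difference of base points) and deduces item~2 from $z^{L,\ell}\in\Xi_\ell\subset\Xi_k$, whereas you telescope to prove item~2 directly and then deduce item~1; both orderings are valid.
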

\begin{proof}
	
The $\Xi_k$  are affine spaces, hence to prove \cref{lem:xi-props1} it is enough to check
that  $P_k^{L}(\R^{N_k}) \subset P_{k+1}^{L}(\R^{N_{k+1}})$ and
$z^{L,k}\in \Xi_{k+1}$.
For this, we simply notice that $\forall \varepsilon_k \in \R^{N_k}$
\[ P_k^L \varepsilon^k = P_{k+1}^L P_k^{k+1} \varepsilon^k,
 \quad 
 z^{L,k} = z^{L,k+1}  -P_{k+1}^LP_{k}^{k+1} \varepsilon^k_*, \quad P_k^{k+1} \varepsilon^k \in
	\R^{N_{k+1}}.\]
Since $z^{L,\ell}\in \Xi_\ell \subset \Xi_k$,  \cref{lem:xi-props2}
follows immediately.
\end{proof}
\begin{remark} 
	By construction,   $\Xi_L= \R^{N_L}= \R^N$, hence
	$z^{L,L+1}=z_{\min}$ and the relations in  \cref{eq:Dalphas2} are  obviously satisfied.
\end{remark}

The MR/OPT strategy  substitutes
the  direct computation of the original (large-scale) optimization problem,
by the computation of the solutions of a
sequence of auxiliary optimization problems, each one of them
associated to a level of refinement in a multiresolution ladder.  It
is reasonable to assume, when these auxiliary optimization problems
admit a solution,   it will be obtained  {\em quite fast}
 at low resolution levels, due to the
reduced number of degrees of freedom in the spaces where they are
defined. In addition, it is expected that 
if the distance between the
sub-optimal solutions and the true solution gets smaller while climbing
up the MR ladder, then, even though each {\em auxiliary } minimization 
problem in the MR ladder involves an increasing number
of variables, it is also expected that  the improved
initial guess  (the previous sub-optimal solution) chosen to carry
out the optimization process will improve the performance of the
optimizer. In this case,  the MR/OPT strategy leads to an overall gain
in functional evaluations.

In the following section, we shall see that, in some cases, it is possible to ensure
that  the auxiliary  optimization problems  are of the same
type  as the original one, hence their solution can be computed by the
same optimization technique, ${\cal D}$. In addition, we shall see
that   the previous  observations on the performance of the MR/OPT
strategy can be theoretically  justified in some cases. 

\subsection{Some theoretical results on MR/OPT} \label{sec:theory}
We have assumed that the auxiliary optimization problems can be solved
with the same optimization tool as the full problem \cref{eq:problema-MR}. It is
simple to see that this is the case
for quadratic minimization problems of the type
\begin{equation} \label{numex:ODE-optP}
\text{Find }\z_{\min} \in \mR^{N} \text{ such that } F(\z_{\min})=
\min_{\z \in \mR^{N}} F(\z), \quad F(z)=\frac12 z^T A z - b^T z +c.
\end{equation}
with $A$  a symmetric and positive definite matrix.

\begin{proposition} \label{prop:F-quad} Let us consider the  quadratic minimization problem 
	\cref{numex:ODE-optP}, with $A$ symmetric and positive
	definite. If the prediction operators in ${\cal P}$ satisfy
        \cref{eq:d1-d2},  then the auxiliary minimization problem at level $k$
	\cref{eq:alphaLk}  is also a quadratic minimization problem
	of the same type.
\end{proposition}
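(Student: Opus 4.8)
The plan is to substitute the explicit parametrization of $\Xi_k$ into the quadratic objective and read off the coefficients. Recall from \cref{eq:aux-Fk} that
\[
F_k(\varepsilon^k) = F\!\left(z^{L,k} + P_k^L \varepsilon^k\right),
\]
and that $P_k^L$ is a linear operator, hence representable as a matrix in $\R^{N_L \times N_k}$ (this uses that the prediction operators in ${\cal P}$ are linear, which is part of the hypothesis that they satisfy \cref{eq:d1-d2}). Writing $P := P_k^L$ and $z_0 := z^{L,k}$ for brevity, the plan is to expand
\[
F_k(\varepsilon^k) = \tfrac12 (z_0 + P\varepsilon^k)^T A (z_0 + P\varepsilon^k) - b^T(z_0 + P\varepsilon^k) + c,
\]
and collect terms by degree in $\varepsilon^k$. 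The quadratic term is $\tfrac12 (\varepsilon^k)^T (P^T A P)\,\varepsilon^k$, the linear term is $-(b - A z_0)^T P\,\varepsilon^k = -(P^T(b - A z_0))^T \varepsilon^k$, and the constant term is $\tfrac12 z_0^T A z_0 - b^T z_0 + c = F(z^{L,k})$. So one defines
\[
A_k := P_k^{L\,T} A\, P_k^L, \qquad b_k := P_k^{L\,T}\big(b - A z^{L,k}\big), \qquad c_k := F(z^{L,k}),
\]
and then $F_k(\varepsilon^k) = \tfrac12 (\varepsilon^k)^T A_k \varepsilon^k - b_k^T \varepsilon^k + c_k$, which is of the same form as \cref{numex:ODE-optP}.

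The one point that genuinely requires the hypothesis — and which I expect to be the crux — is showing that $A_k$ is again symmetric and positive definite, so that the auxiliary problem is really ``of the same type'' (in particular, has a unique minimizer reachable by ${\cal D}$). Symmetry of $A_k = P^T A P$ is immediate from symmetry of $A$. For positive definiteness, note $A_k$ is automatically positive semidefinite; strict positivity is equivalent to $P_k^L$ having trivial kernel, i.e. $P_k^L \varepsilon^k = 0 \Rightarrow \varepsilon^k = 0$. This is exactly where \emph{Property S} enters: the left inequality in \cref{eq:d1-d2}, $d_1 \|\varepsilon^k\|_\infty \le \|P_k^L \varepsilon^k\|_\infty$ with $d_1 > 0$, forces $P_k^L$ to be injective, hence $\varepsilon^k \mapsto (P_k^L)^T A P_k^L \varepsilon^k$ is positive definite. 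Concretely: if $(\varepsilon^k)^T A_k \varepsilon^k = 0$, then $(P_k^L\varepsilon^k)^T A (P_k^L\varepsilon^k) = 0$, so positive definiteness of $A$ gives $P_k^L \varepsilon^k = 0$, and then \cref{eq:d1-d2} gives $\varepsilon^k = 0$.

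Assembling these observations gives the proposition: the $k$-th level auxiliary problem \cref{eq:alphaLk} is $\min_{\varepsilon^k \in \R^{N_k}} \big(\tfrac12 (\varepsilon^k)^T A_k \varepsilon^k - b_k^T \varepsilon^k + c_k\big)$ with $A_k$ symmetric positive definite, i.e.\ a quadratic minimization problem of the type \cref{numex:ODE-optP}, with $N_k$ degrees of freedom in place of $N$. I would also remark in passing that the minimizer is $\varepsilon^k_* = A_k^{-1} b_k$ and that feasibility (existence and uniqueness) of \cref{eq:alphaLk} — flagged as an assumption in the footnote when $\Xi_k$ was introduced — is now established in this quadratic setting, which ties back to the claim in \cref{eq:Dalphas2}. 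The only routine part left is the bookkeeping of the expansion, which I would not belabor.
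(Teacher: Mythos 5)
Your proposal is correct and follows essentially the same route as the paper's proof: expand the quadratic objective along the affine parametrization $z^{L,k}+P_k^L\varepsilon^k$, read off $A_k=(P_k^L)^TAP_k^L$, $b_k=(P_k^L)^T(b-Az^{L,k})$, $c_k=F(z^{L,k})$, and use the left inequality of \cref{eq:d1-d2} to get injectivity of $P_k^L$ and hence positive definiteness of $A_k$. No gaps; the closing remarks about $\varepsilon^k_*=A_k^{-1}b_k$ and feasibility are a harmless addition beyond what the paper states.
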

\begin{proof} Using the definition of $F$ in \cref{numex:ODE-optP}, we have
	\begin{align*} F_k(\varepsilon^k) &=
          F(z^{L,k}+P_k^L\varepsilon^k) =
	\frac12 (z^{L,k}+P_k^L\varepsilon^k)^T A
	(z^{L,k}+P_k^L\varepsilon^k) - b^T (z^{L,k}+P_k^L\varepsilon^k)+c \\
	&=F(z^{L,k}) + \frac12 (P_k^L\varepsilon^k)^T A
	P_k^L\varepsilon^k + (z^{L,k})^T A	P_k^L\varepsilon^k - b^T P_k^L\varepsilon^k \\
	&= \frac12 (\varepsilon^k)^T  (P_k^L)^T A P_k^L  \varepsilon^k 
- ((P_k^L)^T (b-Az^{L,k}))^T \varepsilon^k +F(z^{L,k}).
	\end{align*}
Hence,
	\begin{equation} \label{eq:auxP-A}
	F_k(\varepsilon^k)
	= \frac12 (\varepsilon^k)^TA_k\varepsilon^k - b_k^T
	\varepsilon^k + c_k, \quad \begin{cases} 
	A_k\!\!\!\!&=(P_k^L)^T A P_k^L \in \R^{N_k\times N_k}, \\
	b_k\!\!\!\!&=(P_k^L)^T (b-Az^{L,k}) \in \R^{N_k},\\
	c_k &= F(z^{L,k}) \in \R. \end{cases}
	\end{equation}
	Obviously  $A_k$ is symmetric. If $A$  is positive definite,
        then $A_k$ is   also positive definite, since 
	$P_k^L$ is injective, by \cref{eq:d1-d2}:
	\[ \varepsilon^k\neq 0 \quad \longleftrightarrow \quad
	P_k^L\varepsilon^k\neq0 \quad \longleftrightarrow \quad 0 < (P_k^L
	\varepsilon^k)^T A (P_k^L\varepsilon^k) = (\varepsilon^k)^T A_k
	\varepsilon^k.\]
\end{proof}

These quadratic optimization problems  are special cases
of a larger class of  objec\-tive functions for which the same
property holds: The auxiliary optimization problems are of the same
type as the original one. 
\begin{proposition} \label{prop:propF-Fk} Let $F: \R^{N_L} \rightarrow \R$ be.  If the
  prediction operators in ${\cal P}$ satisfy  {\em property S},
the auxiliary functions $F_k: \R^{N_k} \rightarrow \R$ defined in
	\cref{eq:aux-Fk} satisfy:
	\begin{enumerate}
		\item If $F$ is convex and/or $F \in {\cal
                    C}^2(\R^{N_L},\R)$, then  $F_k$ is also convex
                  and/or $F_k \in {\cal C}^2(\R^{N_k},\R)$.
		\label{it:convex-smooth}
	\item  If  the hessian matrix $\nabla^2 F(\xi^L)$ is a positive definite
                  matrix $\forall \xi^L \in \R^{N_L}$, then $\nabla^2
                  F_k(\xi^k)$ is  a positive definite matrix $\forall
                  \xi^k \in \R^{N_k}$. \label{it:hessiank}
		\item If $F$ is coercive, i.e. {$\lim_{||z^L||_\infty
                      \rightarrow \infty}
F(z^L) = + \infty$}, then $F_k$ is coercive. \label{it:coercive}
	
	\end{enumerate}
\end{proposition}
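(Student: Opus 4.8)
The plan is to observe that each auxiliary function factors as $F_k = F\circ\Phi_k$, where $\Phi_k\colon\R^{N_k}\to\R^{N_L}$ is the affine map $\Phi_k(\varepsilon^k)=z^{L,k}+P_k^L\varepsilon^k$ appearing in \cref{eq:aux-Fk}, whose linear part is the composite prediction operator $P_k^L$. Every property to be checked is then inherited from $F$ through precomposition with an affine map, and \emph{property S} is used only to control $\Phi_k$: the two-sided bound \cref{eq:d1-d2} tells us that $P_k^L$ is bounded (automatic, being linear on a finite-dimensional space) and, crucially through its lower bound $d_1>0$, that $P_k^L$ is injective and that $\|P_k^L\varepsilon^k\|_\infty\to\infty$ whenever $\|\varepsilon^k\|_\infty\to\infty$.

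First I would dispatch \cref{it:convex-smooth}. Convexity of $F_k$ follows because $\Phi_k$ maps segments to segments (affinity), so the convexity inequality for $F_k$ reduces verbatim to the one for $F$; and $F_k\in\mathcal{C}^2(\R^{N_k},\R)$ because it is the composition of a $\mathcal{C}^2$ map with an affine (hence $\mathcal{C}^\infty$) one, the chain rule giving $\nabla F_k(\xi^k)=(P_k^L)^T\nabla F(\Phi_k(\xi^k))$ and the Hessian formula $\nabla^2 F_k(\xi^k)=(P_k^L)^T\,\nabla^2 F(\Phi_k(\xi^k))\,P_k^L$. Then \cref{it:hessiank} drops out of this Hessian formula together with the injectivity of $P_k^L$: for $\varepsilon^k\neq0$ one has $P_k^L\varepsilon^k\neq0$, hence $(\varepsilon^k)^T\nabla^2 F_k(\xi^k)\,\varepsilon^k=(P_k^L\varepsilon^k)^T\nabla^2 F(\Phi_k(\xi^k))(P_k^L\varepsilon^k)>0$ — exactly the congruence argument already used in \cref{prop:F-quad}.

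Finally, for \cref{it:coercive} I would take an arbitrary sequence with $\|\varepsilon^k\|_\infty\to\infty$, apply the lower bound in \cref{eq:d1-d2} to get $\|P_k^L\varepsilon^k\|_\infty\ge d_1\|\varepsilon^k\|_\infty\to\infty$, then the triangle inequality to get $\|\Phi_k(\varepsilon^k)\|_\infty\ge\|P_k^L\varepsilon^k\|_\infty-\|z^{L,k}\|_\infty\to\infty$, and conclude $F_k(\varepsilon^k)=F(\Phi_k(\varepsilon^k))\to+\infty$ by coercivity of $F$.

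I do not anticipate a serious obstacle; the single point deserving care is that coercivity must be transferred via the \emph{lower} property-S bound rather than the upper one, since it is precisely $d_1>0$ that prevents $\Phi_k$ from absorbing a blow-up of $\varepsilon^k$ before it reaches $F$. Everything else is the routine fact that convexity, $\mathcal{C}^2$-smoothness, pointwise positive-definiteness of the Hessian, and coercivity are all stable under precomposition with an injective affine map.
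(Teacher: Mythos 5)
Your proposal is correct and follows essentially the same route as the paper's proof: convexity and $\mathcal{C}^2$-smoothness via precomposition with the affine map $\varepsilon^k\mapsto z^{L,k}+P_k^L\varepsilon^k$, positive definiteness of $\nabla^2 F_k$ via the congruence $(P_k^L)^T\nabla^2F\,P_k^L$ together with injectivity of $P_k^L$ from the lower bound in \cref{eq:d1-d2}, and coercivity via $\|z^{L,k}+P_k^L\varepsilon^k\|_\infty\geq d_1\|\varepsilon^k\|_\infty-\|z^{L,k}\|_\infty$. The only difference is cosmetic: you phrase coercivity with sequences where the paper uses an explicit $\eta$--$\delta'$ threshold.
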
 
\begin{proof}
Since $P_k^L$ is a linear operator,	the proof of \cref{it:convex-smooth} follows
easily  from  the expression of $F_k$ in \cref{eq:aux-Fk}. To  prove
\cref{it:hessiank},  let $\xi^k \in \R^{N_k}$ be and notice that $\forall \varepsilon^k \neq 0$,
	\[ (\varepsilon^k)^T\nabla^2 F_k(\xi^k)\varepsilon^k = (P_k^L
        \varepsilon^k)^T \nabla^2F(z^{L,k}+ P_k^L \xi^k)
        (P_k^L\varepsilon^k) >0,\] 
  since $\varepsilon^k\neq 0 \, \rightarrow  \,
  P_k^L\varepsilon^k\neq0$, by \cref{eq:d1-d2}.

To prove \cref{it:coercive}, let  $0<\eta \in \R$. Since $F$ 
is coercive,  $\exists
\delta=\delta(F,\eta)$ such that $F(z^L)>\eta$, $\forall z^L \in
\R^{N_L}$:  $||z^L||_\infty
>\delta$. Define $\delta':=d_1^{-1}(\delta + ||z^{L,k}||_\infty)$, with
$d_1$ in \cref{eq:d1-d2}. Then $\forall \varepsilon^k : $
$||\varepsilon^k||_\infty >\delta'$ we have 
\[ \|z^{L,k} + P_k^L \varepsilon^k\|_\infty \geq \|P_k^L
\varepsilon^k\|_\infty  -
\|z^{L,k}\|_\infty \geq d_1\|\varepsilon^k\|_\infty  - \|z^{L,k}\|_\infty > d_1 \delta'  -
\|z^{L,k}\| = \delta,\]
hence, 
\[ F_k(\varepsilon^k) = F(z^{L,k}+ P_k^L \varepsilon^k)> \eta , \quad \forall
\varepsilon^k \, : \, ||\varepsilon^k||_\infty > \delta'.\]

\end{proof} 

The following results aim to estimate the difference between
sub-optimal solutions. For the sake of completeness and ease of reference, we include the following  technical lemma.
\begin{lemma} \label{prop:bound1}
	Let $F\in\cC^2(\R^N,\R)$ be a convex function,
	 such that its Hessian matrix is always
	positive definite. Let $A\subset \R^{N}$ be a compact convex
	set and let $x_A\in A$ be such that $\nabla F(x_A) = 0$. Let $B\subset A$ be a compact
	convex set and $x_B=\argmin \{ F(x), \, x \in B
	\}$. Then there exists $c=c(A,F, \| \cdot \|) \geq 1$ such that
	\[ \| x_A - x_B \| \leq c \ \text{dist}_{||\cdot||}(x_A,B), \]
	where $ \text{dist}_{||\cdot||}(x_A,B) = \min \{ || x-x_A||, \, x \in B\} $, for any norm in $\R^N$.
	
\end{lemma}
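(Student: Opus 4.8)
The plan is to exploit the fact that $F$ restricted to the compact convex set $A$ is strongly convex, which is the standard route to turning a bound on how well one can approximate $x_A$ from inside $B$ into a bound on $\|x_A - x_B\|$. First I would observe that, since $F\in\cC^2$ and $\nabla^2 F$ is positive definite everywhere, the continuous function $x\mapsto \lambda_{\min}(\nabla^2 F(x))$ attains a positive minimum $\mu>0$ on the compact set $A$; likewise $\|\nabla^2 F(x)\|$ attains a finite maximum $M$ on $A$. Hence on $A$ the function $F$ is $\mu$-strongly convex and $\nabla F$ is $M$-Lipschitz (with respect to the Euclidean norm; the passage to an arbitrary norm is handled at the end by norm equivalence on $\R^N$, which only affects the constant $c$).

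The core estimate is then a two-sided Taylor/convexity argument. On one side, strong convexity around the stationary point $x_A$ gives, for every $x\in A$,
\[
F(x) \ \geq\ F(x_A) + \tfrac{\mu}{2}\,\|x - x_A\|^2,
\]
since $\nabla F(x_A)=0$. Applying this with $x=x_B\in B\subset A$ yields $\tfrac{\mu}{2}\|x_B-x_A\|^2 \le F(x_B) - F(x_A)$. On the other side, I need an upper bound on $F(x_B)-F(x_A)$ in terms of $\mathrm{dist}(x_A,B)$. Pick $\tilde x\in B$ realizing $\|\tilde x - x_A\| = \mathrm{dist}(x_A,B)$; since $x_B$ minimizes $F$ over $B$ we have $F(x_B)\le F(\tilde x)$, and a second-order Taylor expansion of $F$ about $x_A$ along the segment $[x_A,\tilde x]\subset A$ (using $\nabla F(x_A)=0$ and the bound $M$ on the Hessian) gives $F(\tilde x) - F(x_A) \le \tfrac{M}{2}\|\tilde x - x_A\|^2$. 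Combining the two inequalities,
\[
\tfrac{\mu}{2}\,\|x_B - x_A\|^2 \ \leq\ F(x_B) - F(x_A) \ \leq\ F(\tilde x) - F(x_A) \ \leq\ \tfrac{M}{2}\,\mathrm{dist}(x_A,B)^2,
\]
so $\|x_B - x_A\| \le \sqrt{M/\mu}\;\mathrm{dist}(x_A,B)$. Relabelling $c := \max\{1,\sqrt{M/\mu}\}$ (inflating by norm-equivalence constants if a non-Euclidean norm is used) gives the claim, and $c$ depends only on $A$, $F$, and the chosen norm.

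The only mildly delicate point — and the one I would be most careful about — is making sure all the Taylor expansions take place inside $A$ so that the uniform bounds $\mu$ and $M$ are legitimately available: this is exactly why the hypothesis asks $A$ to be convex and $x_A,x_B,\tilde x$ all to lie in $A$, so every segment used is contained in $A$ and the mean-value form of Taylor's theorem applies with constants drawn from $A$. Everything else (existence and uniqueness of $x_B$, finiteness of $\mu$ and $M$) is routine: $x_B$ exists because $F$ is continuous on the compact set $B$ and is unique because $F$ is strictly convex there, though uniqueness is not even needed for the inequality. I would also note in passing that the inequality $c\ge 1$ is free since we take a max with $1$, matching the normalization in the statement.
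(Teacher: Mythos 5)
Your proposal is correct and follows essentially the same route as the paper: a second-order Taylor expansion about the stationary point $x_A$, bounded below by the smallest and above by the largest value of the Hessian quadratic form over the compact set $A$, combined with $F(x_B)\le F(\tilde x)$ for a point $\tilde x\in B$ realizing (or, in the paper, any point approximating) the distance. The only cosmetic difference is that the paper defines its constants $\rho_{\min},\rho_{\max}$ directly as extremal generalized Rayleigh quotients with respect to the given norm, which absorbs the norm-equivalence step you append at the end.
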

\begin{proof}
Since $\nabla F(x_A) = 0$, $B\subset A$ and $A$  is convex,  for any  $x \in
B$, $\exists \xi_x \in A$ such that
\begin{equation} \label{eq:taylor}
F(x) - F(x_A) = \frac12 (x - x_A)^T \nabla^2 F(\xi_x) (x - x_A).
\end{equation}

The properties of $F$ and $A$ allow us to define $\rho_{\min}$,
$\rho_{\max}$ as follows
\begin{align*}
	\rho_{\min}:= \min_{\xi\in A} \min_{x\neq 0} \frac{x^T \nabla^2 F(\xi) x}{\|x\|^2} >0, \qquad
	\rho_{\max}:= \max_{\xi\in A} \max_{x\neq 0} \frac{x^T \nabla^2 F(\xi) x}{\|x\|^2}>0,
\end{align*}
so that
\[
\rho_{\min} \|x\|^2 \leq x^T \nabla^2 F(\xi) x \leq \rho_{\max} \|x\|^2, \qquad \forall\xi\in A, \ \forall x\in\R^N.
\]
Let $w$ be an arbitrary but fixed point in $B$. Since $F(x_B) \leq F(w)$,
applying \cref{eq:taylor} with $x=w$ and $x=x_B$ and using the
inequalities above, we get
\[ \frac{\rho_{\min}}2 \|x_B - x_A\|^2 \leq  F(x_B) - F(x_A) \leq F(w) - F(x_A) \leq \frac{\rho_{\max}}2 \|w-x_A\|^2.\]
Then,
\begin{equation} \label{eq:c-lemma}
\| x_A - x_B \| \leq c \|x_A-w\|, \qquad  c := \sqrt{\rho_{\max} /
	\rho_{\min}}.
\end{equation}
The result follows from observing that the last inequality is
true for every $w\in B$.
\end{proof}

\begin{proposition} \label{prop:dist_z_xi}
	Let  $F\in\cC^2(\R^{N_L},\R)$  be a convex coercive function such
	that $\nabla^2 F(\xi^L) $ is a positive definite matrix $\forall \xi^L
	\in \R^{N_L}$.  If   ${\cal P}=\{P_k^{k+1}\}_{k=0}^{L-1}$
        satisfies {\em property S}, then
        the sub-optimal solutions 
	$\{z^{L,k}\}_{k=1}^{L+1}$ in \cref{eq:k-auxP}  are well defined and unique. In
        addition,  for any  compact and  convex set $D$ which contains
        in its interior the sequence $\{z^{L,k}\}_{k=0}^{L+1}$,
  there exists  $C=C(F,D, {\cal P}) >0  $ such that for $\ell, k$:  $ 0\leq	\ell \leq k < L$,
\begin{equation} \label{eq:zk1-zl1}
 \| z^{L,k+1} - z^{L,\ell+1} \|_\infty \leq C \
 \text{dist}_\infty(z^{L,k+1}, \Xi_{\ell}\cap D).
\end{equation}
\end{proposition}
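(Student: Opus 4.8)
The plan is to settle first the well-definedness and uniqueness, then the inequality \cref{eq:zk1-zl1}. For the former I would argue by induction on $k$: with $z^{L,0}=\bar z$ given, assume $z^{L,k}$ is already defined; by \cref{prop:propF-Fk} the auxiliary objective $F_k$ of \cref{eq:aux-Fk} is then $\cC^2$, coercive, and has a positive definite Hessian at every point, hence is strictly convex. A coercive continuous function attains its infimum on $\R^{N_k}$, and strict convexity makes the minimizer unique, so $\varepsilon^k_*$ in \cref{eq:alphaLk} is well defined and unique, and therefore so is $z^{L,k+1}=z^{L,k}+P_k^L\varepsilon^k_*$ by \cref{eq:k-auxP}.

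For \cref{eq:zk1-zl1}, fix $0\le\ell\le k<L$ and transport the problem to $\R^{N_k}$ through the affine bijection $\Phi_k(\varepsilon):=z^{L,k}+P_k^L\varepsilon$, which parametrizes $\Xi_k$ (cf.\ \cref{eq:Xik-def}). I would apply \cref{prop:bound1} to $F_k$ (which meets its hypotheses by \cref{prop:propF-Fk}) with $A:=\Phi_k^{-1}(D)$, $B:=\Phi_k^{-1}(\Xi_\ell\cap D)$, $x_A:=\varepsilon^k_*=\Phi_k^{-1}(z^{L,k+1})$, and $x_B:=\argmin\{F_k(\varepsilon):\varepsilon\in B\}$. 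The set $A$ is compact and convex because $\Phi_k$ is affine, $D$ is compact and convex, and \cref{eq:d1-d2} gives $\|\varepsilon\|_\infty\le d_1^{-1}\|P_k^L\varepsilon\|_\infty$ (so $A$ is bounded); by \cref{lemma:contained} and the injectivity of $P_k^L$, $\Phi_k^{-1}(\Xi_\ell)$ is an affine subspace, whence $B=A\cap\Phi_k^{-1}(\Xi_\ell)$ is compact and convex; and since the whole sequence $\{z^{L,m}\}_{m=0}^{L+1}$ lies in $D$, one has $x_A\in A$ with $\nabla F_k(x_A)=0$, and $x_B=\Phi_k^{-1}(z^{L,\ell+1})$ because $z^{L,\ell+1}\in\Xi_\ell\cap D$ minimizes $F$ over the larger set $\Xi_\ell$. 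Then \cref{prop:bound1} gives $\|x_A-x_B\|_\infty\le c\,\text{dist}_\infty(x_A,B)$, and translating back via $z^{L,k+1}-z^{L,\ell+1}=P_k^L(x_A-x_B)$ and the two inequalities of \cref{eq:d1-d2} (the upper one on the left-hand side, the lower one on the distance term) yields $\|z^{L,k+1}-z^{L,\ell+1}\|_\infty\le (c\,d_2/d_1)\,\text{dist}_\infty(z^{L,k+1},\Xi_\ell\cap D)$.

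The step I expect to be the real obstacle is showing that the constant can be chosen independently of $k$ and $\ell$. Reading off the proof of \cref{prop:bound1}, $c=\sqrt{\rho_{\max}/\rho_{\min}}$, where $\rho_{\min},\rho_{\max}$ are the smallest and largest values of the Rayleigh quotient $x\mapsto x^T\nabla^2 F_k(\xi)x/\|x\|_\infty^2$ over $\xi\in A$; moreover that proof goes through verbatim if $\rho_{\min}$ is replaced by any smaller positive lower bound and $\rho_{\max}$ by any larger upper bound. Since $\nabla^2 F_k(\xi)=(P_k^L)^T\nabla^2 F(\Phi_k(\xi))P_k^L$ with $\Phi_k(\xi)\in D$ for $\xi\in A$, the substitution $y=P_k^L x$ together with \cref{eq:d1-d2} gives
\[ d_1^2\,\mu_{\min}\ \le\ \frac{x^T\nabla^2 F_k(\xi)\,x}{\|x\|_\infty^2}\ \le\ d_2^2\,\mu_{\max}, \qquad \forall\,\xi\in A,\ \forall\,x\ne 0, \]
where $\mu_{\min},\mu_{\max}$ are the extreme Rayleigh quotients of $\nabla^2 F$ over the fixed compact set $D$, both finite and strictly positive because $\nabla^2 F$ is continuous and everywhere positive definite. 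Hence $c\le (d_2/d_1)\sqrt{\mu_{\max}/\mu_{\min}}$, and one may take $C:=(d_2/d_1)^2\sqrt{\mu_{\max}/\mu_{\min}}$, which depends only on $F$, $D$, and ${\cal P}$ (through $d_1,d_2$), as claimed; the case $\ell=k$ and the nonemptiness of $A$ and $B$ are trivial.
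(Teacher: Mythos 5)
Your proof is correct and follows essentially the same route as the paper's: both reduce the estimate to \cref{prop:bound1} applied on an affine parametrization of $\Xi_k$, bound the distance terms via the two inequalities of \cref{eq:d1-d2}, and make the constant uniform in $\ell,k$ by sandwiching the Rayleigh quotients of $\nabla^2 F_k$ between $d_1^2\mu_{\min}$ and $d_2^2\mu_{\max}$ over $D$, arriving at the same $C=(d_2/d_1)^2\sqrt{\mu_{\max}/\mu_{\min}}$. The only (immaterial) difference is that you base the parametrization at $z^{L,k}$ and work with $F_k$ directly, whereas the paper bases it at $z^{L,\ell}$ and works with the translated function $F_{\ell,k}(\varepsilon^k)=F(z^{L,\ell}+P_k^L\varepsilon^k)$.
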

\begin{proof}
By \cref{prop:propF-Fk}, $\varepsilon^k_*$
in \cref{eq:alphaLk} is uniquely defined, and, thus, so is
$z^{L,k+1}$, for   each $k$, $0 \leq k \leq L$.
	
	Now, let  $\ell, k$ be   a pair of (fixed) indices,  $0\leq \ell\leq k
	\leq L$.  From \cref{eq:k-auxP}, we can write
	\begin{align*}
	z^{L,k+1}&= z^{L,k}+ P_k^L \varepsilon^k_*, \quad 
	\varepsilon^k_*=\argmin \{ F_k(\varepsilon^k), \,
	\varepsilon^k\in \R^{N_k} \}. \\
	z^{L,\ell+1}&=z^{L,\ell}+P_\ell^L \varepsilon^\ell_*, \quad 
	\varepsilon^\ell_*=\argmin \{ F_\ell(\varepsilon^\ell), \,
	\varepsilon^\ell\in \R^{N_\ell} \}.      
	\end{align*}
Thus,  since $P_\ell^L= P_k^L P_\ell^k$, 
\[ z^{L,k+1}-z^{L,\ell+1}= z^{L,k}-z^{L,\ell} +
P_k^L(\varepsilon_*^k- P_\ell^k \varepsilon_*^\ell). \]
Since $\Xi_\ell \subset \Xi_k$, 
	$\exists ! \,  \varepsilon^k_\ell$ : 
	$z^{L,\ell}= z^{L,k}+ P_k^L \varepsilon^k_\ell$\footnote{It is
          unique because of \cref{eq:d1-d2}.}. Hence we can write
	\begin{equation} \label{eq:zk-zl}
	z^{L,k+1}-z^{L,\ell+1}=
P_k^L (\varepsilon^k_A-\varepsilon^k_B),
	\quad  \begin{cases} 
	\varepsilon^k_A&= \varepsilon^k_*-\varepsilon^k_\ell, \\
	\varepsilon^k_B&=P_{\ell}^{k} \varepsilon^\ell_*.
	\end{cases}
	\end{equation}
Thus, by \cref{eq:d1-d2}, 
\[ \|	z^{L,k+1}-z^{L,\ell+1}\|_\infty \leq d_2 \|
\varepsilon^k_A-\varepsilon^k_B \|_\infty. \]
To estimate the right-hand-side of the above relation, {we seek to apply \cref{prop:bound1}}.
For this, we consider $D$ to  be a compact and  convex set which contains
        in its interior the sequence  of iterates
        $\{z^{L,k}\}_{k=0}^{L+1}$, and define the sets
\[	A_{\ell,k} := \{\varepsilon^k \in\R^{N_k}  : z^{L,\ell} + P_k^L\varepsilon^k \in D\},\quad 
	B_{\ell,k} := \{\varepsilon^k \in A_{\ell,k}  :
	\varepsilon^k = P_{\ell}^{k}\varepsilon^\ell, \,  
	\varepsilon^\ell\in\R^{N_\ell}\} .
\]
 Since $D$ is compact and convex, and the
        prediction operators are  linear, $A_{\ell,k}$ and
$B_{\ell,k}$ are compact and convex 	sets in $\R^{N_k}$. 
Notice
that  \cref{lemma:contained} and \cref{eq:d1-d2} imply that there are
one-to-one correspondences between $A_{\ell,k} \, \Leftrightarrow \, \Xi_k \cap
D$ and $B_{\ell,k}\, \Leftrightarrow \, \Xi_\ell \cap   D$.  The sets
$\Xi_k \cap D$ and $\Xi_\ell \cap D$ are compact and convex in $\R^{N_L}$.

Taking into account the relations above, we easily get
\begin{equation} \label{eq:eka-ekb}
z^{L,\ell+1} =z^{L,\ell}+ P_k^L
	P_{\ell}^{k} \varepsilon^\ell_* \, \, \rightarrow \, \, \varepsilon^k_B
	\in  B_{\ell,k}, \, \, z^{L,k+1}=z^{L,\ell}+P_k^L \varepsilon^k_A\, \, \rightarrow \, \,
	\varepsilon^k_A \in A_{\ell,k}.
\end{equation}
Hence, considering the function
	\[ F_{\ell,k}: \R^{N_k} \rightarrow \R, \quad
	F_{\ell,k}(\varepsilon^k):=F(z^{L,\ell}+P_k^L \varepsilon^k) \]
 we have
\[ 	F_{\ell,k}(\varepsilon^k_A)=  F(z^{L,k+1} )=
 \min_{z^L \in \Xi_k} F(z^L), \quad
 F_{\ell,k}(\varepsilon^k_B)=  F(z^{L,\ell+1})= \min_{ z^L \in
   \Xi_\ell} F(z^{L}), \]
therefore
\[ F_{\ell,k}(\varepsilon^k_A)= \min_{\varepsilon^k \in A_{\ell,k}}
F_{\ell,k}(\varepsilon^k), 
\quad 
F_{\ell,k}(\varepsilon^k_B)= 
\min_{\varepsilon^k \in B_{\ell,k}} F_{\ell,k}(\varepsilon^k).
\]
Since  $F_{\ell,k}(\varepsilon^k)=F_k(\varepsilon^k_\ell+ \varepsilon^k)$, we
have that  $\nabla F_{\ell,k}(\varepsilon^k_A)= \nabla F_k(\varepsilon_*^k)= 0$.
Hence, applying \cref{prop:bound1}, there
	exists $  c_{\ell,k}= c_{\ell,k}(A_{\ell,k},F_{\ell,k})$ such that
	\[ \| \varepsilon_A^k - \varepsilon_B^k \|_\infty \leq  
	c_{\ell,k} \ \text{dist}_\infty(\varepsilon_A^k,B_{\ell,k}).
 \] 
Let $\varepsilon^k \in B_{l,k}$ and $\varepsilon^{\ell} \in
\R^{N_\ell}$: $\varepsilon^k= P_{\ell}^k \varepsilon^\ell$. Then, 
 using  \cref{eq:d1-d2} and \cref{eq:eka-ekb}  we have
\[ ||P_\ell^{k} \varepsilon^l - \varepsilon^k_A||_\infty \leq d_1^{-1} ||P_k^L (P_\ell^{k}
\varepsilon^l - \varepsilon^k_A)||_\infty 
= d^{-1}_1||z^{L,\ell}+P_\ell^L \varepsilon^\ell - z^{L,k+1}\|_\infty, \]
thus
	\begin{align*}
	\text{dist}_\infty(\varepsilon_A^k,B_{\ell,k}) &
	\leq 
	d_1^{-1} \inf \left\{ \| z^{L,\ell} + P_\ell^L\varepsilon^\ell
          - z^{L,k+1} \|_\infty \ : \ P_\ell^{k} \varepsilon^\ell
        \in B_{k,\ell} \right\} \\ 	&= 
d_1^{-1} \text{dist} (z^{L,k+1}, \Xi_\ell\cap D).
	\end{align*}	
Therefore
\begin{equation} \label{eq:zk1-zl1-0}
\| z^{L,k+1} -  z^{L,\ell+1} \|_\infty \leq d_2
        ||\varepsilon^k_A-\varepsilon^k_B||_\infty \leq c_{\ell,k}
        \frac{d_2}{d_1} \text{dist} (z^{L,k+1}, \Xi_\ell\cap D).
\end{equation}
To see that $c_{\ell,k}$ on the right hand side can be substituted by
a universal constant, independent of $\ell, k$, we proceed as follows:
Recall that, from \cref{prop:bound1},  $c_{\ell,k} =c(A_{\ell,k},F_{\ell,k}) =\sqrt{\rho_{\max}^{l,k} / \rho_{\min}^{l,k}}$ with
	\begin{align*}
	\rho_{\min}^{l,k}= \min_{\xi^k\in A_{\ell,k}} \min_{\varepsilon^k\neq 0} \frac{(\varepsilon^k)^T \nabla^2  F_{\ell,k}(\xi^k) \varepsilon^k}{\|\varepsilon^k\|_\infty^2}, \qquad
	\rho_{\max}^{l,k}= \max_{\xi^k\in A_{\ell,k}} \max_{\varepsilon^k\neq 0} \frac{(\varepsilon^k)^T \nabla^2  F_{\ell,k}(\xi^k) \varepsilon^k}{\|\varepsilon^k\|_\infty^2}.
	\end{align*}
Taking into account  that   $\forall \, \xi^k  \in \R^{N_k}$ 
\[\quad \nabla^2 F_{\ell,k}(\xi^k)= (P_k^L)^T
	\nabla^2 F( z^k) P_k^L, \quad z^k=z^{L,\ell}+ P_k^L\xi^k, \]
and  recalling that \cref{eq:d1-d2} implies $P_k^L
 \varepsilon^k \neq 0$ when $\varepsilon_k \neq 0$,  we can write
	\begin{align*}
	\frac{(\varepsilon^k)^T \nabla^2  F_{\ell,k}(\xi^k)
		\varepsilon^k}{\|\varepsilon^k\|_\infty^2}&= 
	\frac{(P_k^L \varepsilon^k)^T \nabla^2F(z^k) (P_k^L
		\varepsilon^k)}{\|P_k^L \varepsilon^k\|_\infty^2} \frac{\|P_k^L
		\varepsilon^k\|_\infty^2}{\|\varepsilon^k\|_\infty^2} \\ & \geq  
	\frac{(P_k^L \varepsilon^k)^T \nabla^2F(z^k) (P_k^L
\varepsilon^k)}{\|P_k^L \varepsilon^k\|_\infty^2}  d_1^2  
\geq \min_{\varepsilon^L\neq 0} 
\frac{(\varepsilon^L)^T \nabla^2
  F(z^k)\varepsilon^L}{\|\varepsilon^L\|_\infty^2} d_1^2 .
	\end{align*}
Since $\xi^k \in A_{\ell,k} \, \Rightarrow \, z^k=z^{L,\ell}+ P_k^L
\xi^k \in \Xi_k \cap D$, we have  that
\begin{equation}  \label{eq:rhomin-D}
	\rho_{\min}^{l,k}\geq  \rho_{\min}(F,D)d_1^2, 
 \quad \text{ with } \quad 
\rho_{\min}(F,D) \!\!:= \min_{z \in D}
 \min_{\varepsilon^L\neq 0} 
\frac{(\varepsilon^L)^T \nabla^2 F(z)\varepsilon^L}{\|\varepsilon^L\|_\infty^2}, 
\end{equation}
proceeding analogously with the upper bounds, we have that
\begin{equation}  \label{eq:rhomax-D}
\rho_{\max}^{l,k} \leq
        \rho_{\max}(F,D)d_2^2, \quad \text{ with } \quad 
 \rho_{\max}(F,D)\!\! :=\max_{z \in D} \max_{\varepsilon^L\neq 0}
	\frac{(\varepsilon^L)^T \nabla^2 F(z)
		\varepsilon^L}{\|\varepsilon^L\|_\infty^2}.
\end{equation}
Thus, we obtain \cref{eq:zk1-zl1} with 
\[  C=C(F,D, {\cal   P}):=
\sqrt{\frac{\rho_{\max}(F,D)}{\rho_{\min}(F,D)}}{\frac{d_2^2}{d_1^2}}. \]
\end{proof}

This result allows us to relate the difference between sub-optimal
solutions to the size of certain detail coefficients.
\begin{theorem} \label{thm:bound3}
	Let $F \in {\cal C}^2(\R^{N_L} ,\R)$ and ${\cal P}$  as in \cref{prop:dist_z_xi}.
	Then, for any pair of indices $\ell,k$, $0\leq \ell \leq k < L$
        there exists $C > 0$, independent of $\ell,k$, such that
\begin{equation} \label{eq:zl1-zk1}
\| z^{L,k+1} - z^{L,\ell+1} \|_\infty \leq C \|
(d^{\ell}(z^{L,k+1}- z^{L,0}), \ldots, d^{k-1}(z^{L,k+1}-z^{L,0}))
\|_\infty. 
\end{equation}
\end{theorem}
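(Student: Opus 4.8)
The plan is to deduce the estimate from \cref{prop:dist_z_xi} by exhibiting, for each admissible pair $(\ell,k)$, a point $\tilde z_{\ell,k}\in\Xi_\ell\cap D$ whose $\infty$-distance to $z^{L,k+1}$ is controlled by the detail block $\bigl(d^\ell(z^{L,k+1}-z^{L,0}),\dots,d^{k-1}(z^{L,k+1}-z^{L,0})\bigr)$. Indeed, \cref{prop:dist_z_xi} (whose hypotheses on $F$ and $\mathcal P$ are assumed here) already provides the well-definedness of the $z^{L,k}$ together with a constant $C_1$, independent of $\ell,k$, such that $\|z^{L,k+1}-z^{L,\ell+1}\|_\infty\leq C_1\,\text{dist}_\infty(z^{L,k+1},\Xi_\ell\cap D)$, so only the last inequality remains to be established.

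First I would record three elementary observations. (i) Since all prediction and decimation operators are linear, every detail coefficient $z^L\mapsto d^i(z^L)$ is a linear functional (it is a block of coordinates of the linear map $M_{0,L}$), hence $d^i(z^{L,k+1})-d^i(z^{L,0})=d^i(z^{L,k+1}-z^{L,0})$. (ii) By \cref{eq:dk-equal}, $d^i(z^{L,k+1})=d^i(\bar z)=\bar d^i$ for every $k\leq i\leq L-1$. (iii) From \cref{eq:Xik-def}, since $z^k_*+\varepsilon^k$ ranges over all of $\R^{N_k}$, the set $\Xi_\ell$ is precisely the affine set $\{z^L\in\R^{N_L}\ :\ d^i(z^L)=\bar d^i,\ \ell\leq i\leq L-1\}$.

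Now write $M_{\ell,L}z^{L,k+1}=(w^\ell,d^\ell(z^{L,k+1}),\dots,d^{L-1}(z^{L,k+1}))$ and set $\tilde z_{\ell,k}:=M_{\ell,L}^{-1}(w^\ell,\bar d^\ell,\dots,\bar d^{L-1})$, which belongs to $\Xi_\ell$ by (iii). Using linearity of $M_{\ell,L}$ together with (i)--(ii), the detail blocks at levels $k,\dots,L-1$ cancel, so
\[
z^{L,k+1}-\tilde z_{\ell,k}=M_{\ell,L}^{-1}\bigl(0,\,d^\ell(z^{L,k+1}-z^{L,0}),\dots,d^{k-1}(z^{L,k+1}-z^{L,0}),\,0,\dots,0\bigr),
\]
and the stability bound \cref{eq:M-stab} (valid because $\mathcal P$ satisfies property S) yields
\[
\|z^{L,k+1}-\tilde z_{\ell,k}\|_\infty\ \leq\ \tilde C\,\bigl\|\bigl(d^\ell(z^{L,k+1}-z^{L,0}),\dots,d^{k-1}(z^{L,k+1}-z^{L,0})\bigr)\bigr\|_\infty,
\]
with $\tilde C$ the MR stability constant, independent of $\ell,k,L$.

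Finally one needs $\tilde z_{\ell,k}\in D$, so that \cref{prop:dist_z_xi} actually applies to it. Because there are only finitely many index pairs with $0\leq\ell\leq k<L$, I would take $D$ to be any compact convex set containing in its interior both $\{z^{L,k}\}_{k=0}^{L+1}$ and the finitely many points $\tilde z_{\ell,k}$ (alternatively, when $\tilde z_{\ell,k}\notin D$ one may replace it by a point of the segment $[z^{L,\ell+1},\tilde z_{\ell,k}]\subset\Xi_\ell$ close enough to $z^{L,\ell+1}\in\text{int}\,D$, at the cost of one extra fixed factor). Then $\tilde z_{\ell,k}\in\Xi_\ell\cap D$, hence $\text{dist}_\infty(z^{L,k+1},\Xi_\ell\cap D)\leq\|z^{L,k+1}-\tilde z_{\ell,k}\|_\infty$, and chaining this with \cref{prop:dist_z_xi} gives \cref{eq:zl1-zk1} with $C=C_1\tilde C$. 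I expect the only genuinely delicate point to be exactly this last one — arranging that the natural detail-controlled approximant in $\Xi_\ell$ also lies in the compact set $D$ required by \cref{prop:dist_z_xi}; everything else is linearity bookkeeping and a single use of the stability estimate, and since the index set is finite the uniformity of $C$ in $\ell,k$ is essentially automatic.
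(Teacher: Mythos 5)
Your proposal is correct and follows essentially the same route as the paper: your $\tilde z_{\ell,k}=M_{\ell,L}^{-1}(w^\ell,\bar d^\ell,\dots,\bar d^{L-1})$ is exactly the paper's comparison point $w^{\ell,k}=z^{L,\ell+1}+P_\ell^L(x^\ell-z^\ell)\in\Xi_\ell$, and the paper likewise concludes by combining \cref{prop:dist_z_xi} with the stability bound \cref{eq:M-stab} after enlarging $D$ to contain the finitely many auxiliary points. The "delicate point" you flag is handled in the paper in precisely the way you suggest, by choosing $D$ compact and containing all iterates and all $w^{\ell,k}$.
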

\begin{proof}
From \cref{eq:MkL-zLk1} and \cref{eq:dk-equal}, we can write 
\[
M_{\ell,L} z^{L,\ell+1}  = (z^\ell,\bar d^{\ell},  \ldots, 
\bar d^{L-1}), \qquad 
M_{k,L} z^{L,k+1}=(z^k,\bar d^{k},  \ldots,  \bar d^{L-1}),
\]
with  $z^m:=z^{m}_*+\varepsilon^m_* \in \R^{N_m}$, $m=k, \ell$ and
$\bar d^j=d^j(z^{L,0})$. 
Since $\ell \leq k$, by \cref{eq:M0L-cases-0},
\begin{align*}
M_{\ell,L} z^{L,k+1}  &= ( M_{\ell, k} z^k,\bar d^{k}, \ldots,
\bar d^{L-1})
= (x^\ell, \hat d^{\ell}, \ldots, \hat d^{k-1}, \bar d^{k}, \ldots,  \bar d^{L-1}), \quad 
\end{align*}
with  $x^\ell \in \R^{N_\ell}$, $\hat d^{j}= d^j(z^{L,k+1})$ (see \cref{rem:d-zL}).
Then,
\[ M_{\ell}^L (z^{L,k+1}- z^{L,\ell+1}) = (x^\ell - z^\ell , \hat
d^{\ell}-\bar d^\ell, \cdots, \hat d^{k-1}-\bar d^{k-1}, 0 , \cdots, 0 ). \]	
Thus,  $w^{\ell,k} :=  z^{L,\ell+1} + P_\ell^L (x^\ell-
z^{\ell}) \in \Xi_\ell$ satisfies
\begin{equation} \label{eq:MlLz-w}
 M_{\ell,L}(z^{L,k+1} - w^{\ell,k}) = 
\left (0, \hat d^{\ell} -\bar d^{\ell}, \ldots, \hat d^{k-1} - \bar d^{k-1}, 0, \ldots,  0\right ).
\end{equation}
	Let $D$ be a compact set
	containing all the iterates $\{z^{L,m}\}_{m=0}^{L+1}$ and all the points
	$w^{\ell,k}$, $\forall \ell, k :  0\leq \ell \leq k
        <L$. Applying  \cref{prop:dist_z_xi}  $ \exists \, C=C(F,D,{\cal P})$ 
such that
	\begin{align*}
	\| z^{L,k+1} - z^{L,\ell+1} \|_\infty &\leq C \
	\text{dist}(z^{L,k+1},\Xi_{\ell}\cap D) \leq
	C\|z^{L,k+1} - w^{\ell,k}\|_\infty.
	\end{align*}
Since $ \|z^{L,k+1} - w^{\ell,k}\|_\infty = \| M_{\ell,k}^{-1} M_{\ell, k}
(z^{L,k+1} - w^{\ell,k})\|_\infty$, by \cref{eq:MlLz-w}  and  \cref{eq:M-stab},  we have 
\begin{align*}
\| z^{L,k+1} - z^{L,\ell+1} \|_\infty &\leq C
\left \|M_{\ell,L}^{-1}\left (0, \hat d^{\ell}-\bar d^\ell,
\ldots, \hat d^{k-1}-\bar d^{k-1}, 0, \ldots, 0\right )\right \|_\infty\\
&\leq C \tilde C \left \| (d^{\ell}(z^{L,k+1}-z^{L,0}), \ldots, d^{k-1}(z^{L,k+1}-z^{L,0})) \right \|_\infty,
\end{align*}
since $\hat d^{m}-\bar d^m= d^m(z^{L,k+1}-z^{L,0})$, by the linearity of the MR transformations. 
\end{proof}

 Under smoothness assumptions, the
decay of the detail coefficients is related to the approximation
properties of the prediction scheme, which allows to obtain
quantitative estimates for the  distance between sub-optimal solutions. .
\begin{corollary} \label{cor:distance1}
	Let $F \in {\cal C}^2(\R^{N_L} ,\R)$  as in
        \cref{prop:dist_z_xi},  and   
${\cal P}=\{P_{k}^{k+1}\}_{k\geq 0} $ the sequence of interpolatory
prediction operators given in \cref{sec:1D-int}.
If the initial guess $\bar z=z^{L,0}$
 and $z_{\min}=\argmin \{ F(z), \, z \in \R^{N_L}\}$ can be associated 
 to  the  point evaluations on ${\cal G}^L$ of
 sufficiently  smooth functions, then for $ 0\leq \ell < L$, \\[-5pt]
\begin{enumerate}
\item $ \| z_{\min} - z^{L,\ell+1} \|_\infty =  O(h_{\ell+1}^{n+1})$, \label{it:zmin-zl1}\\[-5pt]

\item $\|z^{L,\ell+1} - z^{L,\ell}\|_\infty =  O(h_{\ell}^{n+1})$,  \label{it:zl-zl1} \\[-5pt]

\item $ \| \varepsilon^\ell_* \|_\infty =  O(h_{\ell}^{n+1})$.  \label{it:epstar}

\end{enumerate}
\end{corollary}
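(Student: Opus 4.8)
The plan is to establish \cref{it:zmin-zl1} first and then deduce \cref{it:zl-zl1} and \cref{it:epstar} from it; note already that the latter two are equivalent up to the constants of \emph{property S}, since $z^{L,\ell+1}-z^{L,\ell}=P_\ell^L\varepsilon^\ell_*$ by \cref{eq:k-auxP} and hence $d_1\|\varepsilon^\ell_*\|_\infty\le\|P_\ell^L\varepsilon^\ell_*\|_\infty\le d_2\|\varepsilon^\ell_*\|_\infty$.

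The heart of the matter is \cref{it:zmin-zl1}. By hypothesis both $z^{L,0}=\bar z$ and $z_{\min}=z^{L,L+1}$ are point evaluations on ${\cal G}^L$ of sufficiently smooth functions, so \cref{eq:dk-interp} together with the linearity of the multiresolution transform gives $d^j(z_{\min}-z^{L,0})=d^j(z_{\min})-d^j(z^{L,0})={\cal O}(h_{j+1}^{n+1})$ for every $j$, $0\le j\le L-1$. I would then invoke the estimate of \cref{thm:bound3} in the case $k=L$: although \cref{prop:dist_z_xi} and \cref{thm:bound3} are phrased for $k<L$, their proofs carry over unchanged when $k=L$ because $\Xi_L=\R^{N_L}$ and $\nabla F(z_{\min})=0$ — equivalently, one applies \cref{prop:bound1} directly with $A=D$, $x_A=z_{\min}$, $B=\Xi_\ell\cap D$ and $x_B=z^{L,\ell+1}$, using that $z^{L,\ell+1}$ minimizes $F$ over all of $\Xi_\ell$ and lies in $D$. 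This yields a constant $C$, independent of $\ell$, with
\[
\|z_{\min}-z^{L,\ell+1}\|_\infty\le C\,\|(d^{\ell}(z_{\min}-z^{L,0}),\ldots,d^{L-1}(z_{\min}-z^{L,0}))\|_\infty .
\]
Since $h_{j+1}\le h_{\ell+1}$ for $j\ge\ell$, the right-hand side is ${\cal O}(h_{\ell+1}^{n+1})$, which is \cref{it:zmin-zl1}. (If one prefers not to extend \cref{thm:bound3}, the same conclusion follows by bounding $\text{dist}_\infty(z_{\min},\Xi_\ell\cap D)$ directly: the point $w^*\in\Xi_\ell$ whose level-$\ell$ coarse part coincides with that of $z_{\min}$ satisfies, by \cref{eq:M-stab}, $\|z_{\min}-w^*\|_\infty\le\tilde C\,\|(0,d^{\ell}(z_{\min}-z^{L,0}),\ldots,d^{L-1}(z_{\min}-z^{L,0}))\|_\infty={\cal O}(h_{\ell+1}^{n+1})$, and then \cref{prop:bound1} applies after enlarging $D$ to contain the finitely many points $w^*$.)

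Granting \cref{it:zmin-zl1}, item \cref{it:zl-zl1} follows from the triangle inequality: for $\ell\ge1$, writing $z^{L,\ell}=z^{L,(\ell-1)+1}$,
\[
\|z^{L,\ell+1}-z^{L,\ell}\|_\infty\le\|z^{L,\ell+1}-z_{\min}\|_\infty+\|z_{\min}-z^{L,\ell}\|_\infty={\cal O}(h_{\ell+1}^{n+1})+{\cal O}(h_{\ell}^{n+1})={\cal O}(h_{\ell}^{n+1}),
\]
whereas for $\ell=0$ the statement is trivial, $h_0$ being a fixed constant and all the iterates lying in the compact set $D$. Finally \cref{it:epstar} follows from \cref{it:zl-zl1} and the lower bound in \emph{property S}: $\|\varepsilon^\ell_*\|_\infty\le d_1^{-1}\|P_\ell^L\varepsilon^\ell_*\|_\infty=d_1^{-1}\|z^{L,\ell+1}-z^{L,\ell}\|_\infty={\cal O}(h_{\ell}^{n+1})$.

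The main obstacle will be \cref{it:zmin-zl1}, and within it two points: converting the informal hypothesis that $z_{\min}$ is the sampling of a smooth function into the quantitative decay $d^j(z_{\min})={\cal O}(h_{j+1}^{n+1})$ (which rests on \cref{eq:dk-interp} and the approximation order of the prediction rules of \cref{sec:1D-int}), and verifying that the restriction $k<L$ in \cref{prop:dist_z_xi}/\cref{thm:bound3} is inessential, so that the estimate also applies to $z_{\min}=z^{L,L+1}$; once these are settled, the remaining bookkeeping (the bound $h_{j+1}\le h_{\ell+1}$ for $j\ge\ell$ and a single choice of compact convex $D$ valid for all $\ell$) is routine.
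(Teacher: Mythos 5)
Your proposal is correct and follows essentially the same route as the paper: item~1 is obtained by applying \cref{thm:bound3} with $k=L$ together with the decay $\|d^{j}(z_{\min}-\bar z)\|_\infty=O(h_{j+1}^{n+1})$ from \cref{eq:dk-interp}, and items~2 and~3 then follow from the triangle inequality and the lower bound in \emph{property S} applied to $z^{L,\ell+1}-z^{L,\ell}=P_\ell^L\varepsilon^\ell_*$. Your extra care in justifying the use of the estimate at $k=L$ (where the theorem is stated only for $k<L$) and in handling $\ell=0$ addresses minor points the paper passes over silently, but does not change the argument.
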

\begin{proof}
If 	$z_{\min} -\bar z$ can be considered as   point evaluations of
a  sufficiently smooth function, we have that  $\|d^j(z_{\min}-\bar
z)\|_\infty= O(h_{j+1}^{n+1})$, $0\leq j <  L$. To prove \cref{it:zmin-zl1} we take  $k = L$ in
        \cref{thm:bound3}. Since $z^{L,L+1} = z_{\min}$, $\bar z=
        z^{L,0}$, we have  
	\[ \| z_{\min} - z^{L,\ell+1} \|_\infty \leq C \|
        (d^{\ell}(z_{\min} - \bar z), \ldots, d^{L-1}(z_{\min} - \bar
        z)) \|_\infty = O(h_{\ell+1}^{n+1}).\]

\cref{it:zl-zl1} and  \cref{it:epstar} follow from the previous result, since
 \[\| z^{L,\ell} - z^{L,\ell+1} \|_\infty \leq \| z_{\min} - z^{L,\ell} \|_\infty + \| z_{\min} - z^{L,\ell+1} \|_\infty \]
and 	$\z^{L,\ell+1}-\z^{L,\ell} =  P_\ell^L\varepsilon^\ell_*$, 
hence, by \cref{eq:d1-d2},
	\[ \|\varepsilon^\ell_*\|_\infty \leq d_1^{-1}  ||
        P_\ell^L\varepsilon^\ell_*||_\infty = d_1^{-1} \ \|\z^{L,\ell+1}-\z^{L,\ell}\|_\infty. \]
	
\end{proof}

We summarize the  results shown in this section as follows: For
(suitable) convex optimization
problems,  the distance between sub-optimal solutions is related to
the size of certain detail coefficients.   Provided that both the solution and
the   initial guess  are 
{\em smooth},  the $\infty$-distance between 
consecutive sub-optimal solutions  decreases when climbing up the
MR ladder (at a rate  which depends on the properties of the 
prediction operators). In this case, even though the auxiliary minimization problems
\cref{eq:alphaLk}  involve an increasing number of
degrees of freedom when the resolution level increases,  it  seems 
reasonable to assume that they  will be
{\em efficiently}  solved by the given optimization
technique, due to the fact that the initial guess and the solution of
each auxiliary problem are  increasingly {\em closer}.

Our  theoretical
results are numerically  validated in the  
next section.

\section{Numerical Experiments}
\label{sec:numex}

The common setting for our numerical experiments  is
as follows:   We 
seek to solve \cref{eq:problema}  by embedding the problem into an
appropriate interpolatory MR Framework and using  the  MR/OPT 
strategy  with a given
minimization tool (${\cal D}$)  and an initial guess, $\bar z$,  which we assume
supplied  by the end-user. 
We recall the following two important
points in our approach: \\
{\bf 1. }The solution of  the {\em
	auxiliary minimization problems} \cref{eq:alphaLk}   can be
carried out using only the objective function $F$ (as a black box routine).
Moreover, in practice only the prediction rules of the MR transformation are
really required for the application of our  algorithm. \\
{\bf 2. } The chosen optimizer, ${\cal D}$, is also a black-box
routine in  the 	algorithm.

In this paper we shall consider
two different optimization tools from    MATLAB:
\texttt{fminunc}, a \emph{steepest descent method} which requires a
gradient  {estimation}, and  the general purpose optimization tool \texttt{patternserach}, which is a \emph{coordinate search method}. 
When using them as  black-box  tool, a series of parameters
need to be specified. We set the running parameters  seeking to stop  ${\cal D}$ when   the max-norm of the difference
between two consecutive iterates within the call falls below  a
specified tolerance ($tol_{\cD}$)\footnote{In our  
	implementation the running parameters (consult its documentation for further information) are \texttt{TolX} $=tol_{\cD}$, \texttt{TolFun}=0 and \texttt{MaxFunEvals}=\texttt{MaxIter}=Inf. For \texttt{fminunc} we additionally established \texttt{OptimalityTolerance}=0, \texttt{Algorithm}=\texttt{quasi-newton}, \texttt{FinDiffRelStep}=1e-13 and \texttt{FiniteDifferenceType}=\texttt{central}.}.

In addition, it seems reasonable to consider
\begin{equation} \label{eq:tolM}
\|z^{L,k+1}-z^{L,k}\| \leq tol_M
\end{equation}
as a  suitable stopping criteria for the MR/OPT strategy, in
order  to avoid unnecessary calls to the optimizer.  In our numerical experiments  we always  choose  $tol_M = tol_{\cD}$.

The tests considered in the following subsections 
are  taken  from the specialized literature. In
all the cases considered,  the MR/OPT  
strategy leads to a  {\em more efficient} computation  than the direct use of $ {\cal D}$ to
solve \cref{eq:problema}. To measure the performance of the MR/OPT
strategy, we have considered, as in \cite{FP15},  the 
total number of eva\-luations of the objective function $F$, since
often  the remaining ope\-ra\-tions may 
have a negligible cost compared  to that. 
In addition, we also examine  the
ratio between the 
number of functional evaluations required to solve each $k$-th auxiliary
problem versus its number of degrees of freedom, because it also
provides very valuable information on the behavior of the strategy. 

All the computations have been done with MATLAB R2020b on a
MacBook Air
with a 1,8 GHz Intel Core i5 double core processor and a 8 GB 1600 MHz DDR3 memory.

\subsection{ Quadratic  Optimization Problems}

\label{sec:1Dpoisson}

Boundary Value Problems (BVP) in ordinary and partial differential
equations are a common source of large scale
optimization problems. In  many cases, the discrete setting
corresponds to a quadratic minimization problem of the type considered
in \cref{prop:F-quad}.  Here, we shall consider 1D and 2D  BVP with
smooth solutions in order  to check
numerically the theoretical results stated in the previous section.

We consider first the 1D   BVP  (from \cite{Nash00}),
\begin{equation} \label{eq:poisson1D}
\begin{cases}
-u''(t) + 2u(t) = f(t), & t\in(0,1)\\
u(0)=u(1)=0.
\end{cases}
\end{equation}
where $f(t):=10^6 t(1-t)(t-1/2)(t-1/4)(3/4-t)$. Using
the standard centered second order discretization  for $u''$ on the uniform grid
$\mathcal{G}:=(i/J)_{i=0}^{J}$ leads to  the linear system
\[ (-z_{i-1} +2z_i - z_{i+1})J^2 + 2z_i = f(i/J), \qquad i=1,2,\ldots,J-1,\]
for the unknown values $z_i= u(i/J)+ O(h^2)$,
$h=1/J$, $1 \leq i \leq J-1$  ($z_{0}=z_{J} = 0$ because of  the boundary
conditions). In practice,  $J$ is {\em sufficiently large} so that the 
(discrete) solution of the  linear  system is {\em sufficiently close}
to the true solution of
the ODE, according to the end-user. 

The coefficient matrix for the system 
$A\in\R^{(J-1) \times (J-1)}$, is banded,
symmetric and definite positive, hence the solution can be
found by solving a  quadratic minimization problem of the type stated
in \cref{prop:F-quad}. We compute the solution of this minimization problem using the
MR/OPT strategy in combination with  the   1D Interpolatory MR
setting described in  \cref{sec:1D-int}. For this, we consider   $J=J_L$,  ${\cal G}=
{\cal G}_L$  as the finest  mesh in a ladder of nested
uniform grids on $[0,1]$ of the form ${\cal G}_k=
(i/J_k)_{i=0}^{J_k}$, $J_k=J_{k+1}/2$, 
$0 \leq k < L$. Hence, the discrete
spaces in  the corresponding 2D-interpolatory MR 
setting have dimension $(J_k+1)$.  In our computations, we take $J_L=128=2^{7}$,
$L= 5$ (i.e. $J_0=4$),   $tol_{\cD}=
10^{-6}=tol_M$. 

Considering  the zero vector as initial data, we
comply with the boundary conditions, i.e {$\bar z=
	z^{L,0}=\vec 0 \in \R^{N_L}$}. At each resolution level we enforce 
$\varepsilon^k_0=0=\varepsilon^k_{J_k}$  when applying the prediction
operators and, as a result, the sub-optimal
solutions associated to the  auxiliary optimization
problems
naturally satisfy the  boundary conditions.
In \cref{fig:poisson1D_fminunc_tol6_z}, we display $z^{L,k+1}$ for
$k=0,1,2$ and the different prediction operators  in
\cref{sec:1D-int}. For $n=1$, the
sub-optimal solution is a piecewise linear function, while for
$n=3,5$ we can clearly observe that the 
sub-optimal solutions are {\em smoother}\footnote{These facts are
	related  to the properties of the prediction scheme.}. The plots
shown correspond to ${\cal D}=${\tt fminunc}, but are
indistinguishable from those obtained with ${\cal D}=${\tt pattersearch}.
\begin{figure}[!h]
	\centering
	\begin{tabular}{ccc} 
		\includegraphics[width=0.3\textwidth,clip]{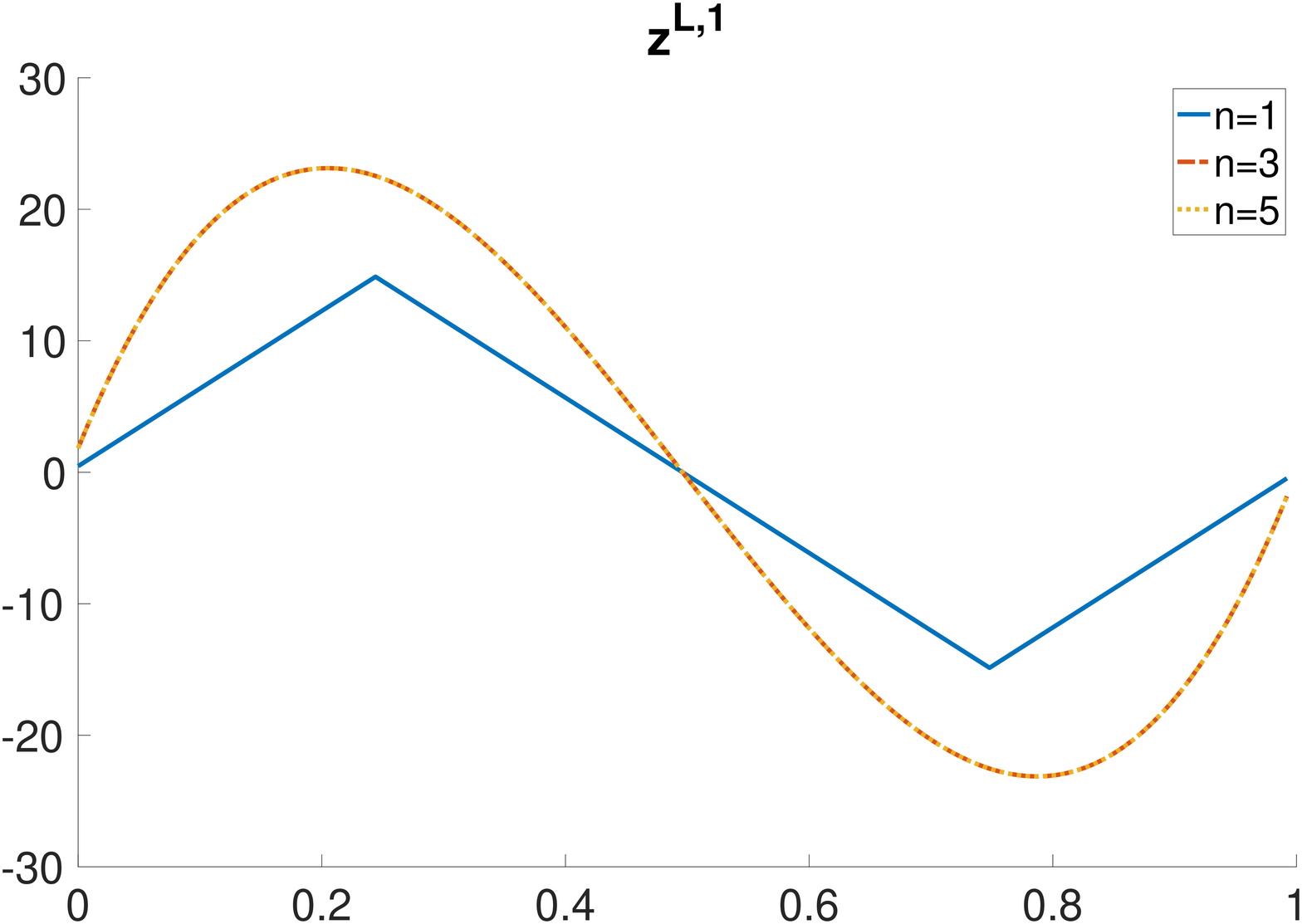} &
		\includegraphics[width=0.3\textwidth,clip]{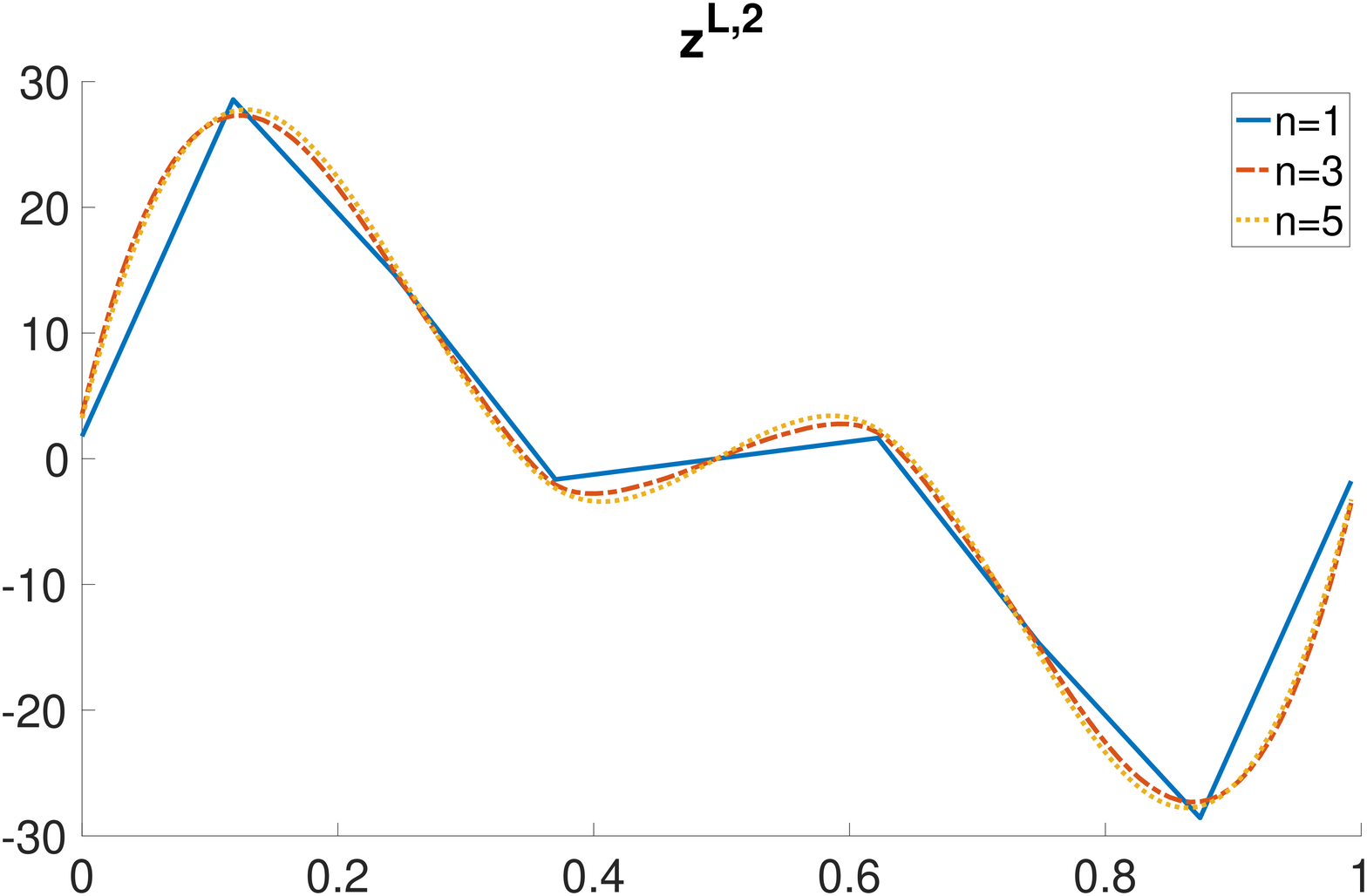} &
		\includegraphics[width=0.3\textwidth,clip]{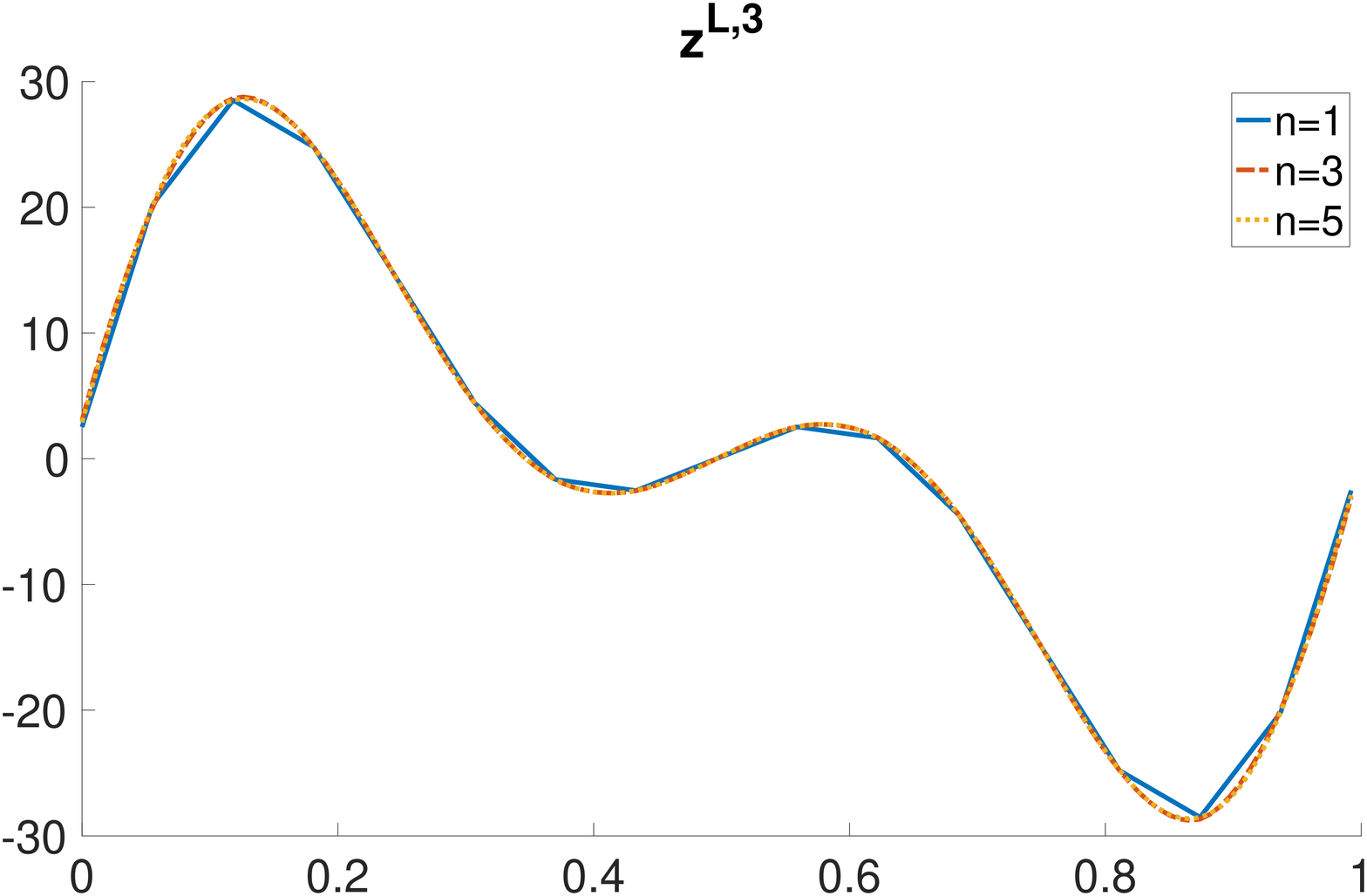}
	\end{tabular}
	\caption[1D Poisson problem]{1D BVP ($\cD=$
		{\tt fminunc}). The sub-optimal solutions $z^{L,1},z^{L,2},z^{L,3}$, from left to right, for $n=1,3,5$. 
		\label{fig:poisson1D_fminunc_tol6_z}}
\end{figure} 
\begin{figure}[!h]
	\centering
	\begin{tabular}{ccc} 
		\includegraphics[width=0.4\textwidth,clip]{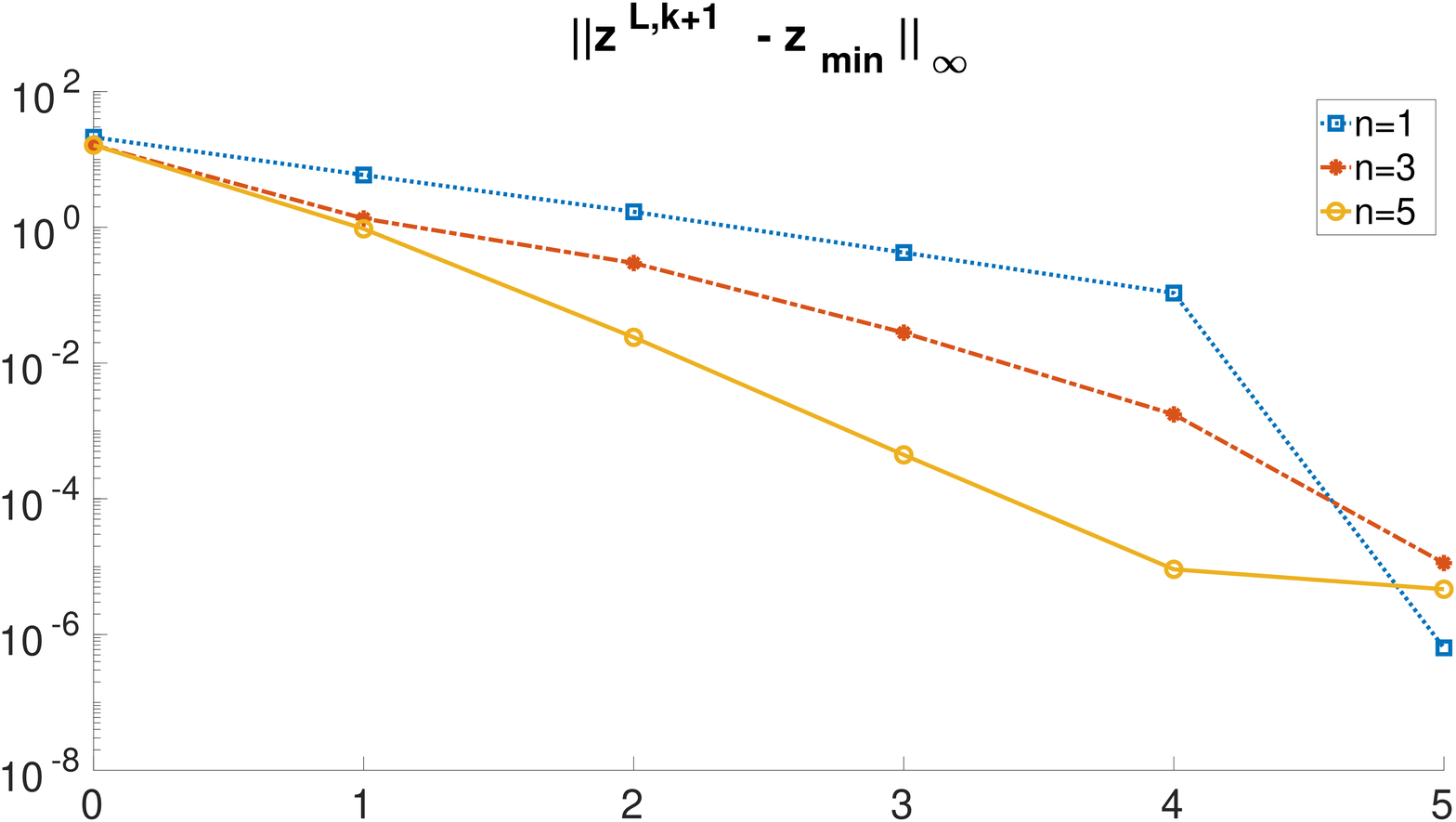} 
		&\includegraphics[width=0.4\textwidth,clip]{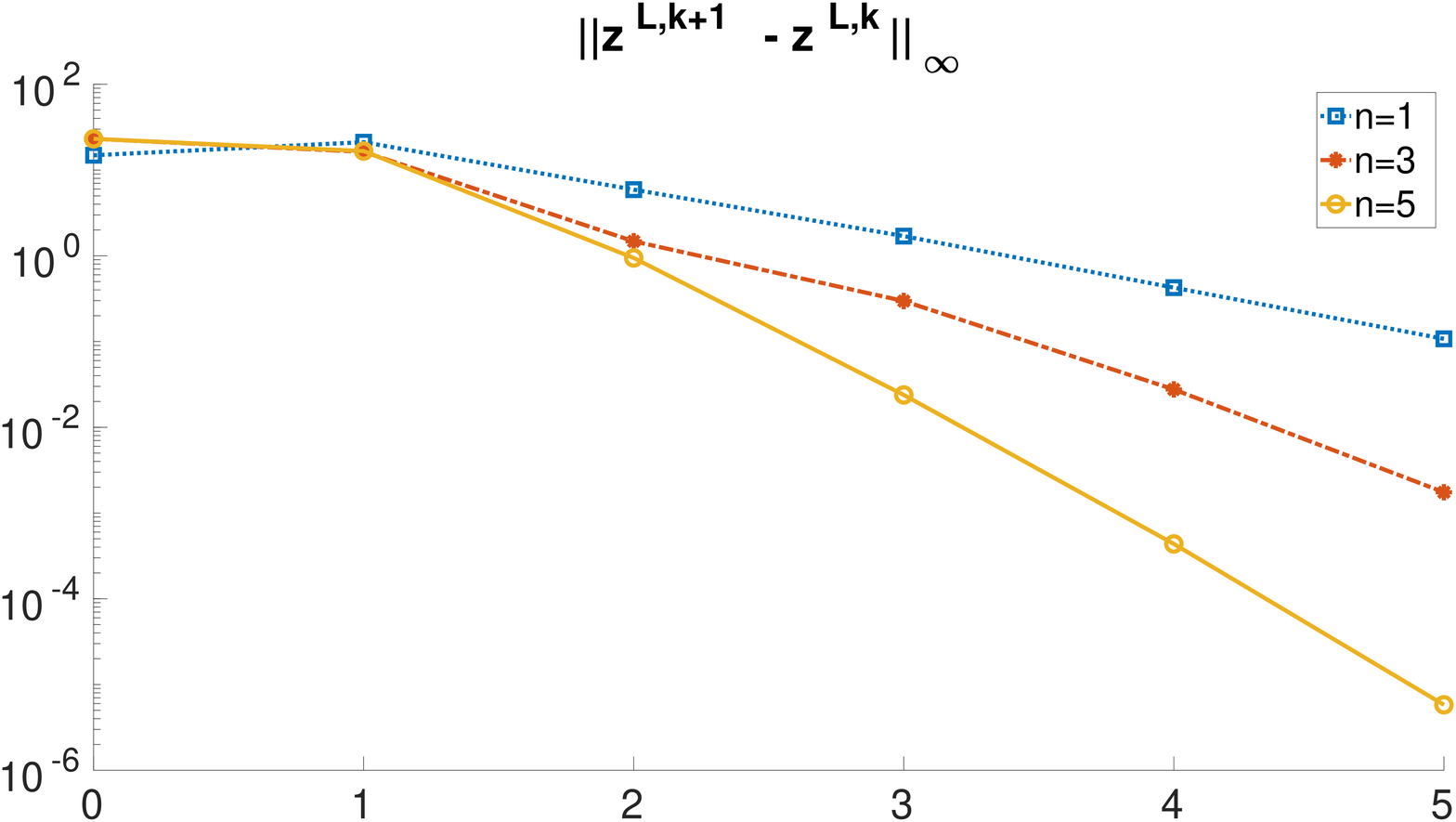} \\[3pt]
		\includegraphics[width=0.4\textwidth,clip]{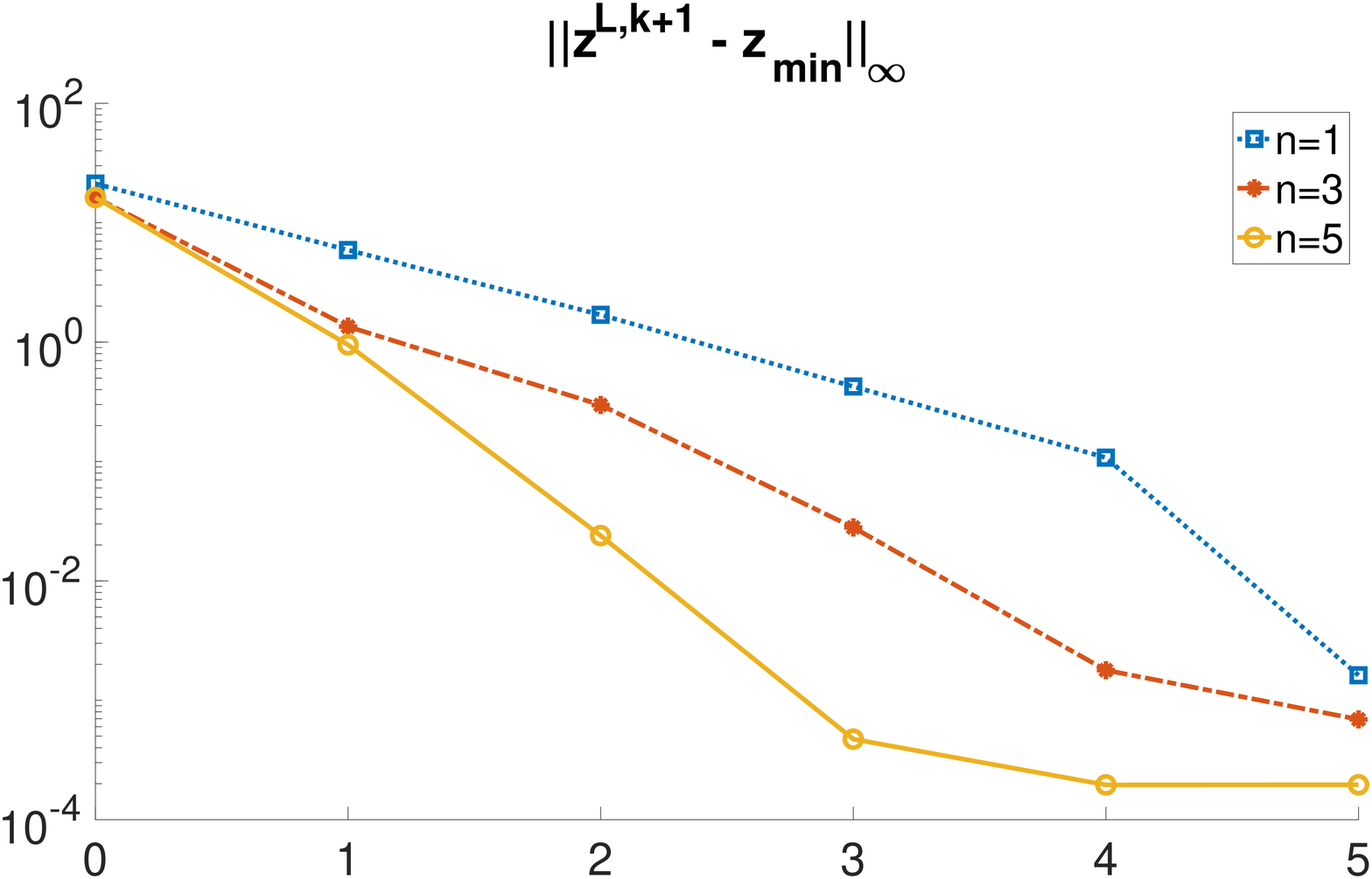}
		&\includegraphics[width=0.4\textwidth,clip]{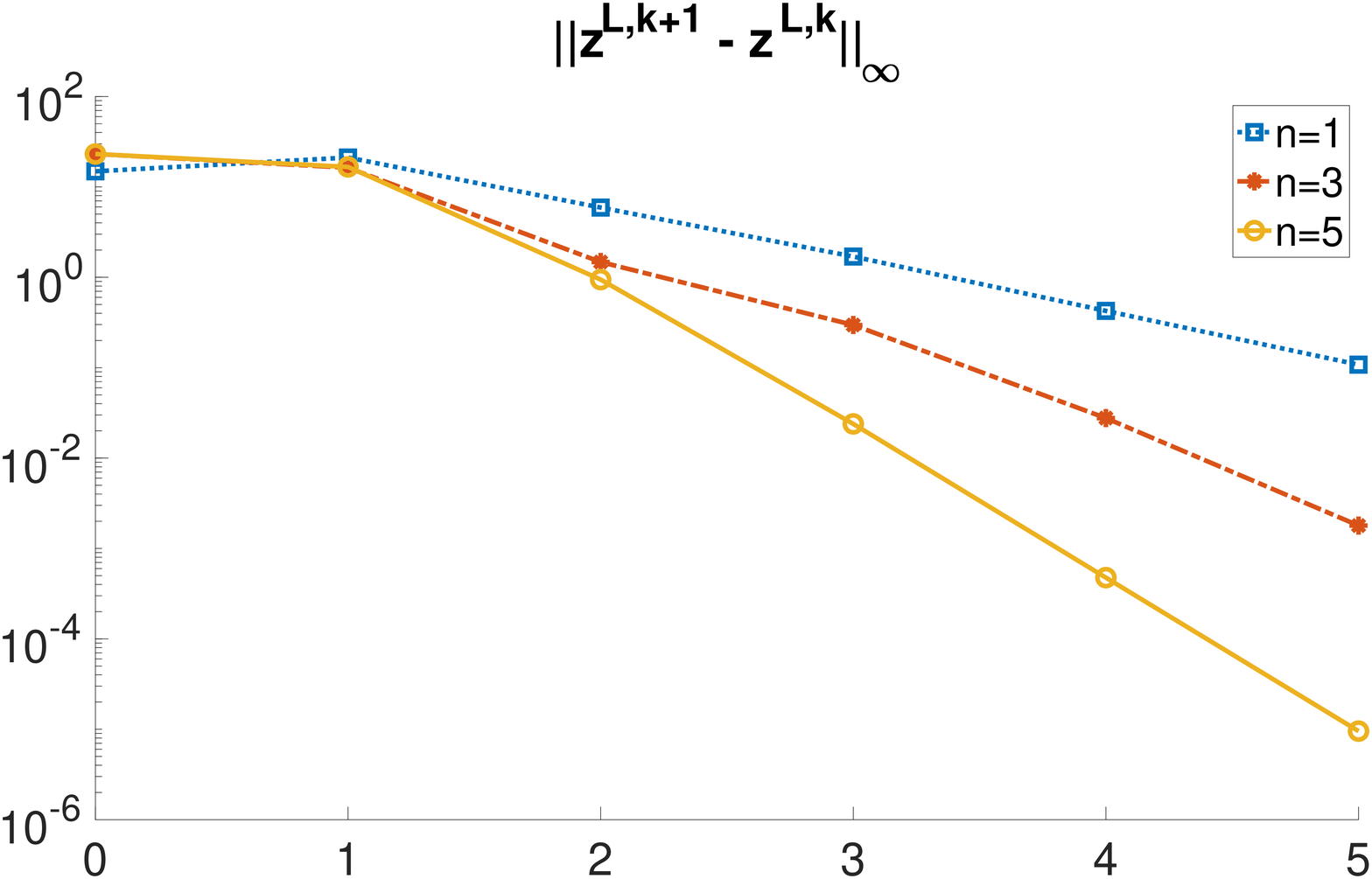} \\[5pt]
	\end{tabular}
	
	\begin{tabular}{|c|c|c|c|c|c|c|}
		\hline
		\(n\backslash k\) & 1 & 2 & 3 & 4 & 5 & Theoretical rate\\\hline
		1&-0.51&1.84&1.80&2.00&1.98&2\\\hline
		3&0.48&3.54&2.47&3.37&3.80&4\\\hline
		5&0.45&4.24&5.33&5.78&6.09&6\\\hline
	\end{tabular}
	\caption[1D Poisson problemB]{1D BVP. $tol_{\cal D}=tol_M= 10^{-6}$.
		Horizontal axis, $k$ (resolution level). Top row,
		$\cD=\texttt{fminunc}$; Bottom row, $\cD=\texttt{patternsearch}$.
		Left column,  $\infty$-distance between $z^{L,k+1}$ and $z_{\min}$.
		Right column, $\infty$-distance between consecutive sub-optimal
		solutions. Table,  numerical  estimation of  $r$   in
		\cref{eq:decay-num}, for $\cD=\texttt{fminunc}$. }
	\label{fig:new-poisson1D_fminunc_tol6}
\end{figure}

In
\cref{fig:new-poisson1D_fminunc_tol6}, we clearly  observe that  the
decay rates  of (the max-norm 
of)  the difference between 
the sub-optimal solutions and $z_{\min}$\footnote{Computed with $A\backslash b$ of
	MATLAB.}, and  the difference between  
two consecutive sub-optimal  solutions, depend on the prediction
scheme, but not  on  the optimization tool, just  as
predicted by our theoretical results.   The table below
the plots, shows a numerical estimation of  the decay 
rate corresponding to  the values $||z^{L,k+1}-z^{L,k}||_\infty$
obtained when using  \texttt{fminunc} (the results for \texttt{patternsearch}
are similar and are not shown), computed as 
\begin{equation} \label{eq:decay-num}
{r= \log_2 \frac{\|z^{L,k}-z^{L,k-1}\|_\infty}{\|z^{L,k+1}-z^{L,k}\|_\infty}.}
\end{equation}
Since the optimization
problem fulfills all the hypothesis stated in 
\Cref{thm:bound3}
and \Cref{cor:distance1}, the numerical  decay rates  coincide with   the
expected  decay rates of  the detail coefficients
\cref{eq:dk-interp} for each choice of the prediction scheme.

\begin{figure}[!h]
	\centering
	\begin{tabular}{cc} 
		\includegraphics[width=0.4\textwidth,clip]{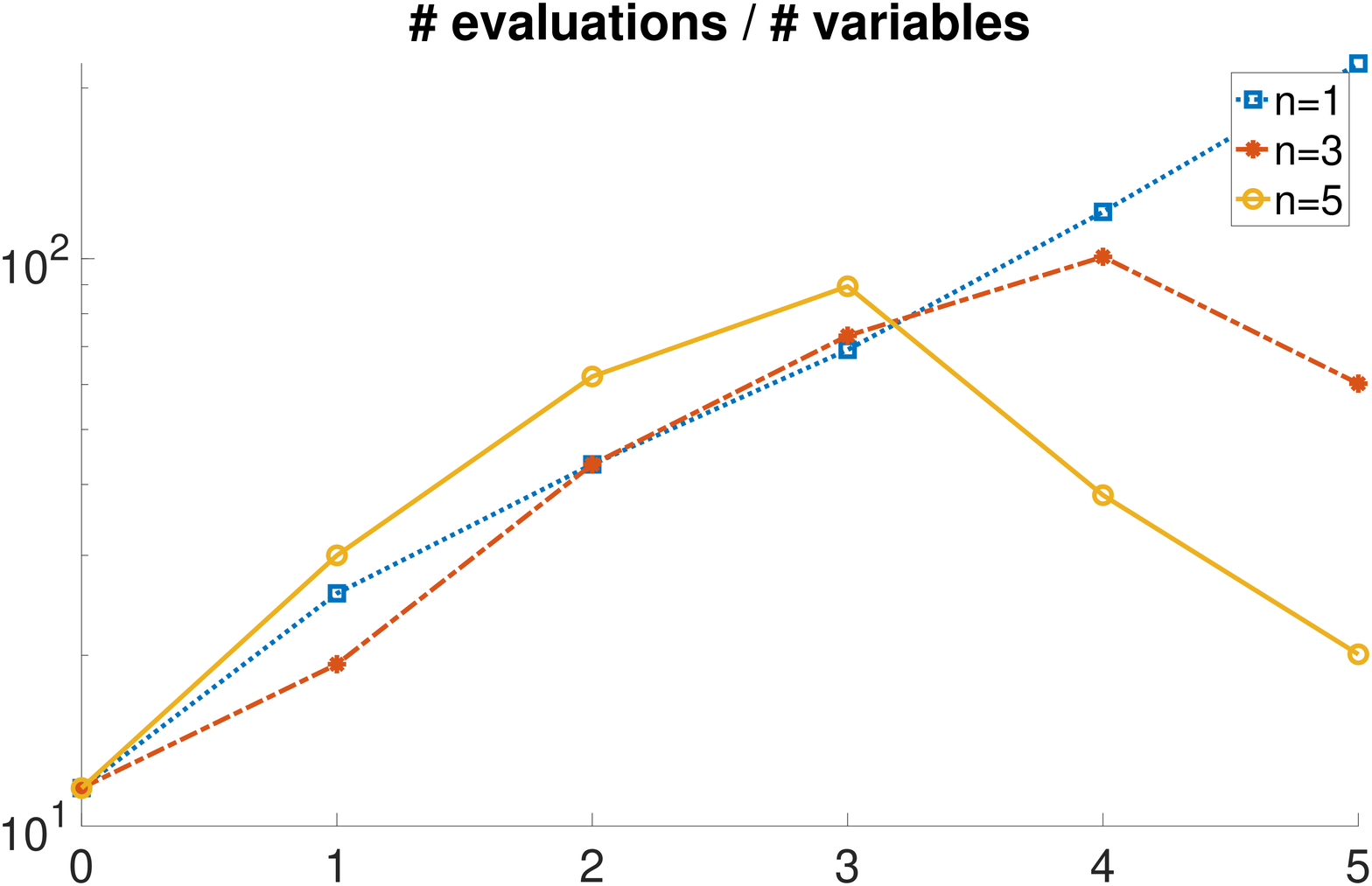} &
		\includegraphics[width=0.4\textwidth,clip]{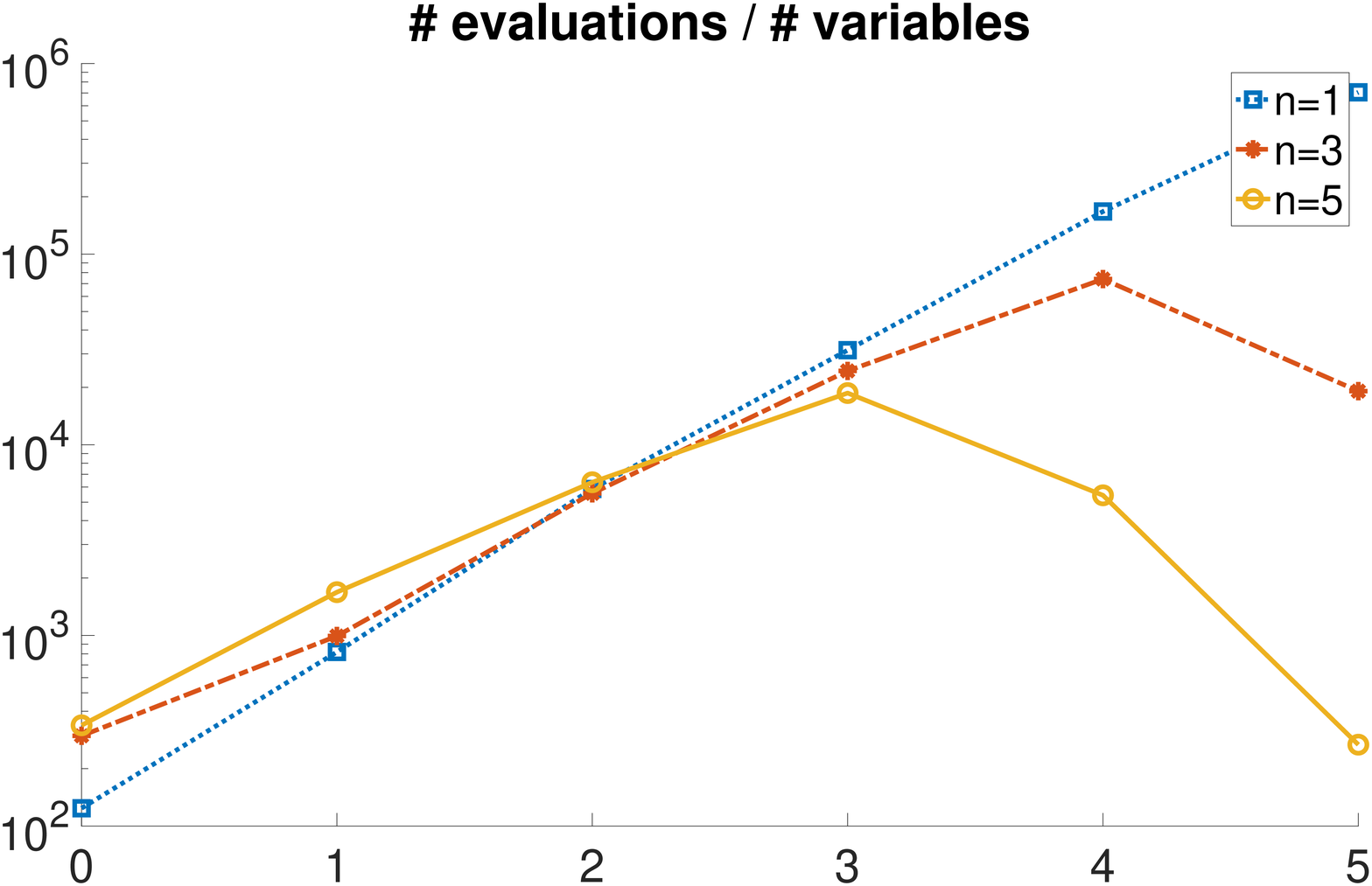}
	\end{tabular}
	
	\vspace{5pt}
	
	\begin{tabular}{|c|c|c|c|c|}
		\hline
		\# $F$-evaluations by & $n=1$ & $n=3$ 	&
		$n=5$ & Direct-$\cD$ \\\hline
		\texttt{fminunc} & 38\,678   &    17\,089    &    8\,910 & 49\,980\\
		\texttt{patternsearch} & 101\,307\,742  &   7\,938\,578  &   1\,063\,433  & 176\,168\,800 \\\hline
	\end{tabular} 
	\caption[1D Poisson problem]{1D BVP, as in
		\cref{fig:new-poisson1D_fminunc_tol6}.
		Ratio between the number of functional  evaluations and the
		number of degrees of freedom involved in the solution of  the $k$-th
		auxiliary problem, versus $k$. Left:
		$\cD=\texttt{fminunc}$; Right:
		$\cD=\texttt{patternsearch}$. Table:  Total number of 
		functional evaluations required to find $z^{L,L+1}$  in each
		case.
		\label{fig:eff-poisson1D_fminunc_tol6}} 
\end{figure} 

In \cref{fig:eff-poisson1D_fminunc_tol6} we examine the {\em
	efficiency} of the MR/OPT strategy. 
The table below the plots
clearly  shows that  the MR/OPT strategy leads to a significant  improvement in
efficiency compared to the direct   application of the optimization
tool.
Since  {\tt fminunc} is a
much more {\em efficient} optimizer for this type of  problems, its use leads to
a smaller number of functional evaluations, both 
when applied directly or when used in the MR/OPT
strategy. It is worth observing that  the  MR/OPT strategy  leads  to large  improvements in
efficiency: the total number of evaluations carried out with the
MR/OPT for  $n=5$, is less than  18\% for {\tt fminunc} and 0.7\%
for {\tt patternsearch}. 

We can see in the plots of  \cref{fig:eff-poisson1D_fminunc_tol6} that
the number of functional evaluations over the number of degrees of
freedom increases in a similar way for $n=1,3,5$ for the lower
resolution levels, but this ratio decays at the higher resolution
levels for $n=3,5$. A plausible explanation is that   the  distance between
the initial guess and the solution 
of  the $k$-th auxiliary problem  decreases with $n$ and $k$, as
specified in \cref{cor:distance1}, hence the
corresponding optimization problems can be solved in less
iterations. Thus, the efficiency improves with the approximation 
order of the prediction scheme used.

Notice, however, that even in the
`least efficient' case ($n=1$), the MR/OPT 
reaches  the desired solution more  
efficiently than with the direct optimizer,  {\em without any additional effort
	by the end-user}.

Our second test case is  the  P2D in \cite{FP14},
\begin{equation} \label{eq:2D-Pf1}
\begin{array}{rcl}
-(u_{x x}(x,y)+ u_{yy}(x,y))& = &f(x,y), \\[2pt]
u(x,y)& = &0, \end{array} \qquad
\begin{array}{l}
(x,y)\in\text{int}(\Omega), \\[2pt]
(x,y)\in\ \partial\Omega,\end{array}
\end{equation}
$\Omega=[0,1]\times [0,1]$, $\partial \Omega$ and $\text{int}(\Omega)$
denoting the boundary and the interior of $\Omega$, but  with a smooth, 
non-polynomial right hand side, $f(x,y)=\sin(4\pi x (1-x) y (1-y))$. 

A standard discretization of \cref{eq:2D-Pf1}  (using the classical  5-point  
Laplacian, see \cite{FP14}), 
on a uniform grid 
$\mathcal{G}:=(i/J,j/J)_{i,j=0}^{J}$,
leads to a  system of equations that can be
written in  matrix form  as $Az=b$, {$z(i,j)=u(i/J,j/J)+ O(h^2)$}, where
$h=1/J$ is the uniform mesh spacing of ${\cal G}$ and 
$A$ is symmetric and definite positive.
As in the 1D case, finding $z$ may be
formulated as a quadratic, convex,  minimization problem, but in  the 2D
case there are $N=(J-1)^2$ degrees of freedom, so that solving the
associated  minimization
problem is a much more demanding task, for the same accuracy in the
resulting approximation.

We solve the  minimization problem
using  the MR/OPT strategy  in combination with
the 2D-tensor product interpolatory MR  described in
\cref{sec:1D-int}.  For this, we consider   $J=J_L$,  ${\cal G}=
{\cal G}_L$  as the finest  mesh in a ladder of nested
uniform grids on $\Omega$ of the form ${\cal G}_k=
(i/J_k,j/J_k)_{i,j=0}^{J_k}$, $J_k=J_{k+1}/2$, 
$0 \leq k < L$. The discrete
spaces in  the corresponding 2D-interpolatory MR 
setting have dimension $(J_k+1)^2$.  In our computations, we take
zero initial data, 
$J_L=128$,   $L=5$,  $tol_M = tol_{\cD} = 10^{-7}$ and ${\cal D}=${\tt
	fminunc}.

\cref{fig:poisson2D_sin_fminunc_tol6}  shows several plots  analogous
to  those in 
\cref{fig:new-poisson1D_fminunc_tol6}-\cref{fig:eff-poisson1D_fminunc_tol6},
where
we observe  the same behavior as in the 1D case: The actual errors between the
auxiliary solutions and the true solution decrease, and  also the efficiency of the MR/OPT
strategy increases,  with the
approximation order of the prediction scheme. 
The decay rate of the  max-norm errors between the sub-optimal solutions and the
true solution, and also between successive sub-optimal solutions, coincides
with the order of approximation of the prediction 
operators   (see
\cref{tab:poisson2D_sin_fminunc_tol6_decay_x}),  which  is
consistent with \cref{cor:distance1}.

It is worth noticing that in this test case  the max-norm of the
difference between sub-optimal solutions falls below $tol_M=tol_{\cal
	D}$ at $k=3$ for $n=5$, and $k=4$ for $n=3$,  which makes  the
MR/OPT strategy stop\footnote{For larger  values of $tol_{\cal
		D}=tol_M$, the strategy stop even at lower resolution levels.}.  We
also remark the very large increment in efficiency obtained for
$n=3,5$, compared to $n=1$, which is  probably due to the smoothness
of the solution. For $n=5$, the solution has been obtained with only 
$0.1\% $ of the functional evaluations required by the direct use of
{\tt fminunc}.

\begin{figure}[!h]
	\centering
	\begin{tabular}{ccc} 
		\includegraphics[width=0.3\textwidth,clip]{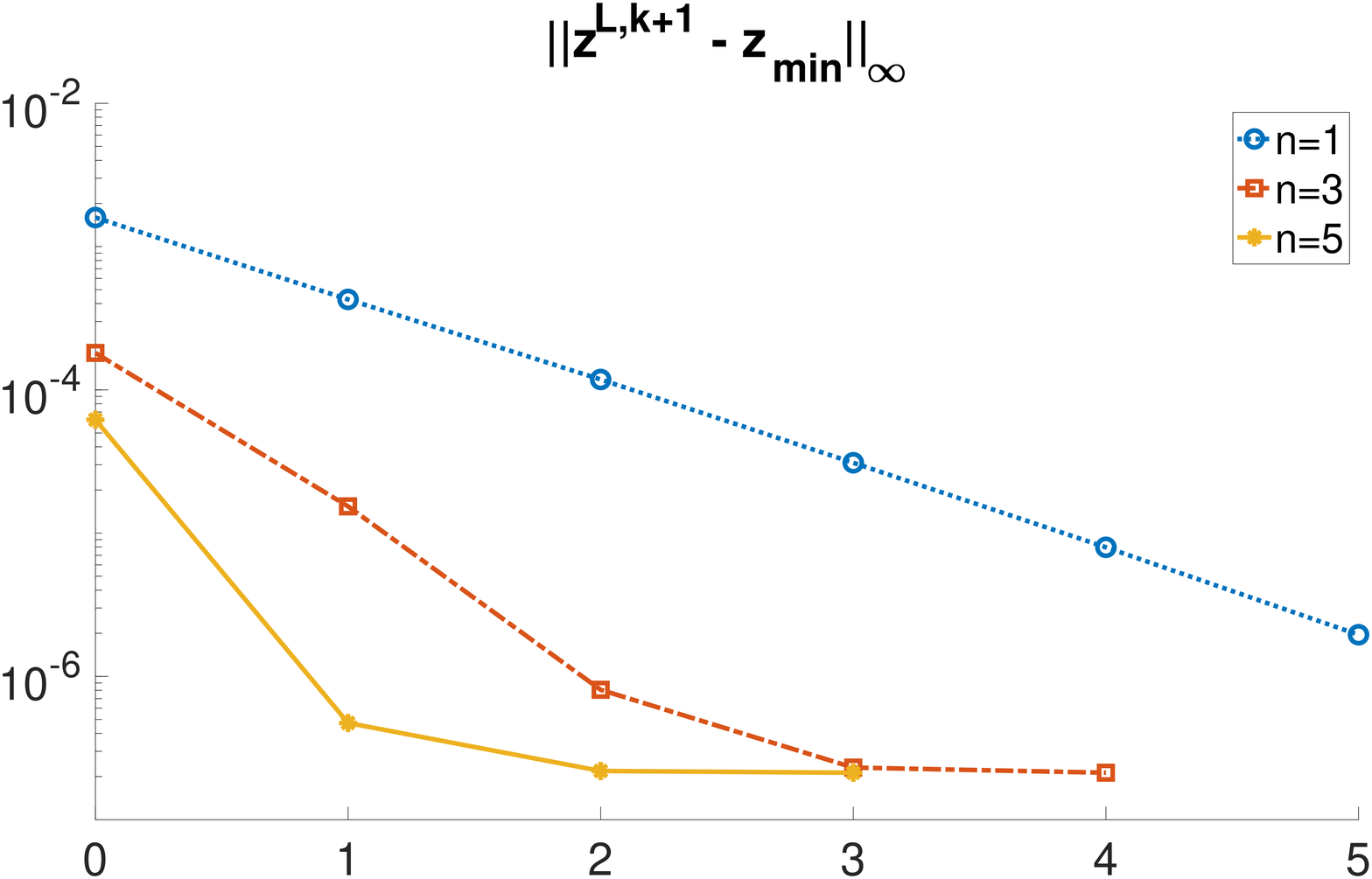}
		&
		\includegraphics[width=0.3\textwidth,clip]{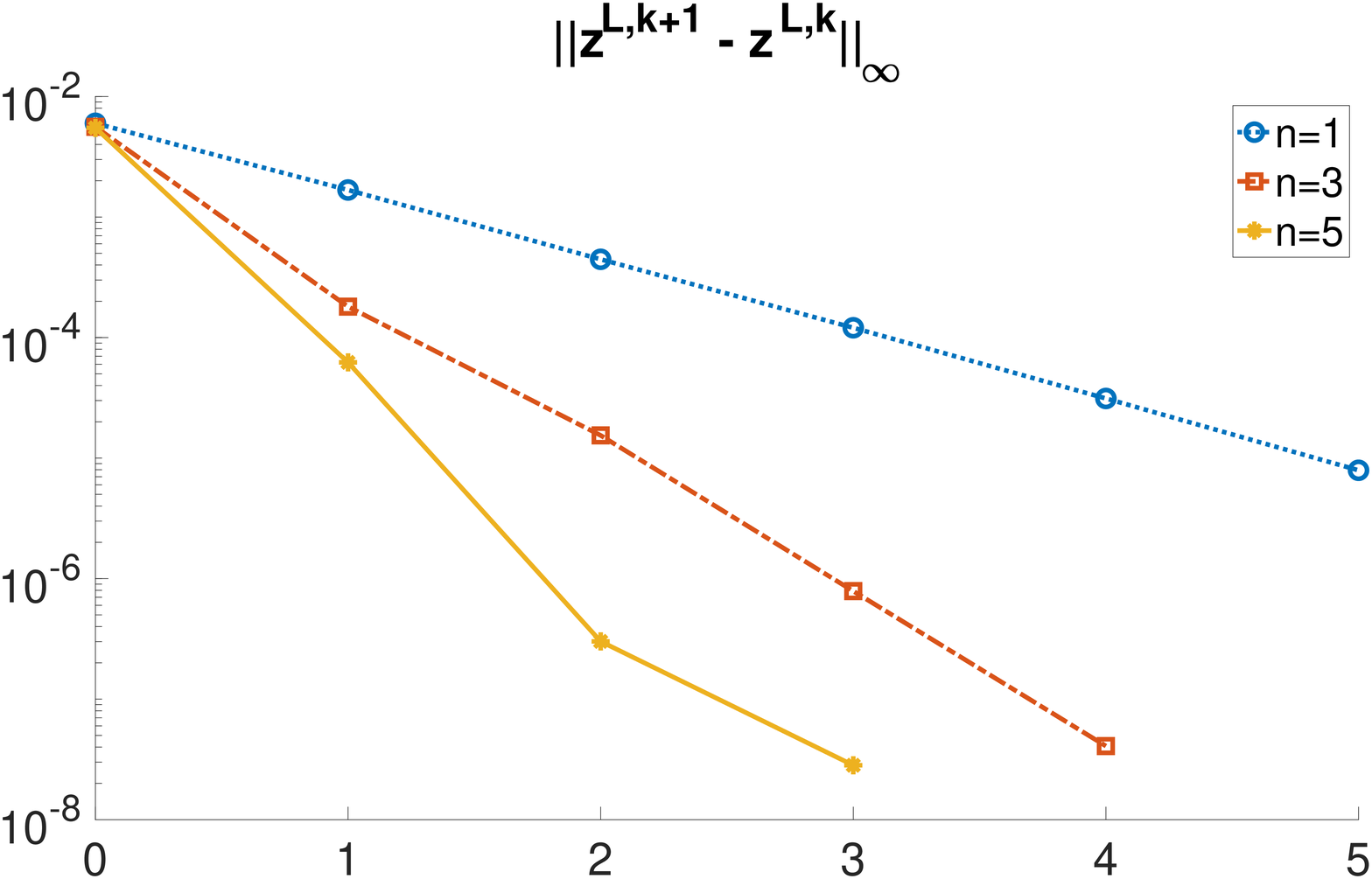} &
		\includegraphics[width=0.3\textwidth,clip]{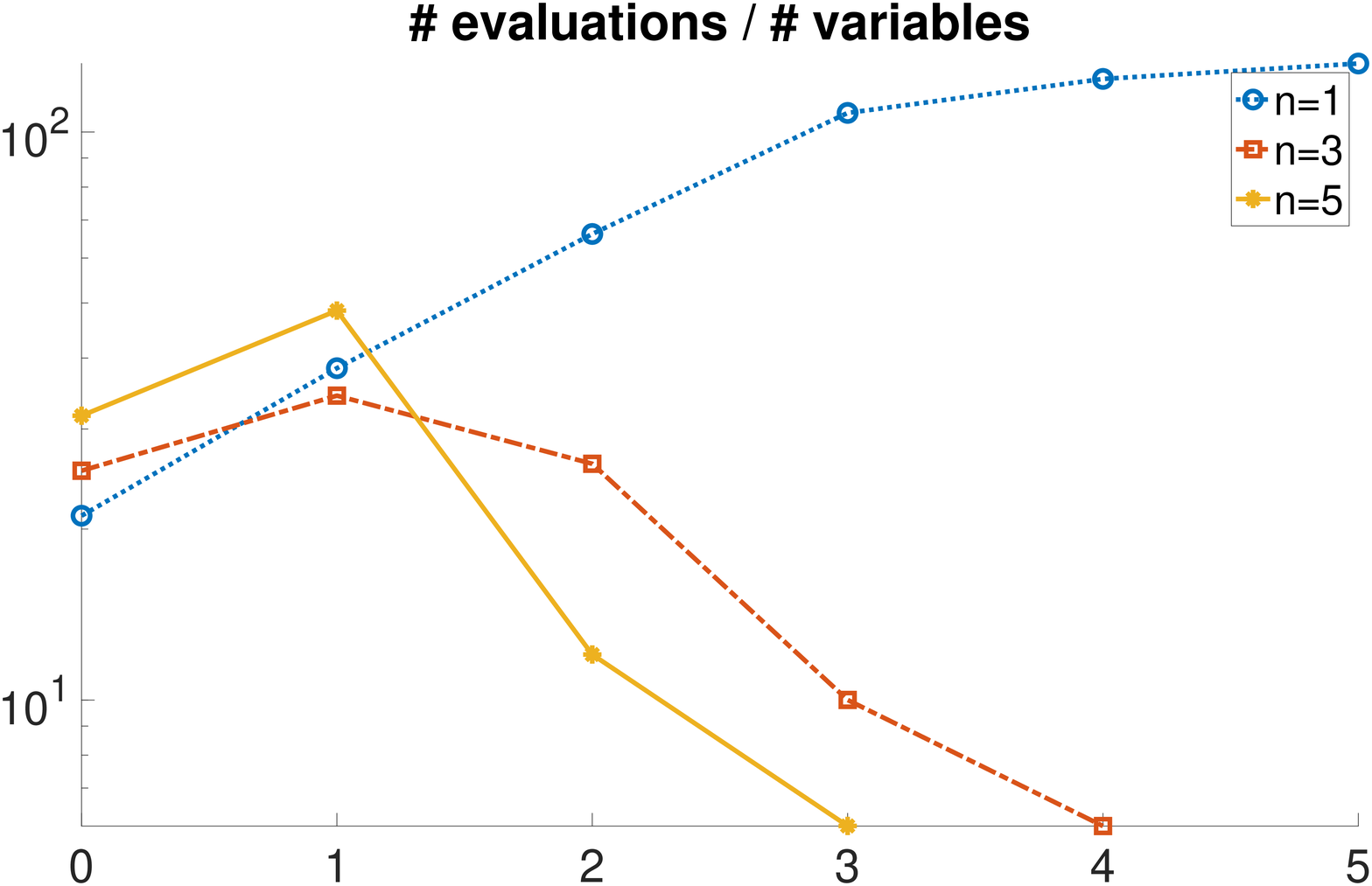}
	\end{tabular} \\
	\begin{tabular}{|c|c|c|c|c|}
		\hline
		n & 1 & 3 	& 5 & Direct-$\cD$\\\hline
		\#  of $F$-evaluations& 2\,742\,108   &   41\,206    &   11\,136 & 12\,581\,010 \\\hline
	\end{tabular}
	\caption[2D Poisson problem]{ 2D Poisson problem
		\cref{eq:2D-Pf1}, $L=5$, $tol_M = tol_{\cD} =
		10^{-7}$. ${\cal D}$ is
		\texttt{fminunc}.
		\label{fig:poisson2D_sin_fminunc_tol6}}
\end{figure} 
\begin{table}[!h]
	\centering
	\begin{tabular}{|c|c|c|c|c|c|c|}
		\hline
		\(n\backslash k\) & 1 & 2 & 3 & 4 & 5 & Theoretical rate\\\hline
1&1.84&1.91&1.89&1.95&1.98&2\\\hline
3&5.27&3.36&4.14&4.27&-&4\\\hline
5&6.72&7.59&3.62&-&-&6\\\hline
	\end{tabular}
	\caption{Test-case: 2D Poisson problem.  Decay rate
		\cref{eq:decay-num}  for the data in
		\cref{fig:poisson2D_sin_fminunc_tol6}.   } \label{tab:poisson2D_sin_fminunc_tol6_decay_x}
\end{table}
\subsection{Non-quadratic problems} \label{sec:nonquadratic}
\label{sec:minimal}
In this subsection we shall consider:  a convex, but 
non-quadratic, minimization problem 
and a non-convex problem, in order  to evaluate the performance of our MR/OPT
strategy, in more general  situations 

We consider  first  the MINS problem in \cite{FP14}, which arises from
a discretization of the  following  minimal surface problem
\begin{equation} \label{eq:minimal}
\min_u \int_{\Omega} \sqrt{1+\|\nabla u(x,y)\|_2^2} \ d(x,y),
\end{equation}
with the boundary conditions
\[u_0(x,y)=\begin{cases} x(1-x), & \text{if } y\in\{0,1\}, \\
0, & \text{otherwise}.
\end{cases}\]
Its solution is approximated in \cite{FP14} by the solution of the  minimization
problem that results from considering  as objective function
\[ F(z):= \frac{1}{2J^2} \sum_{i,j=0}^{J-1} \sqrt{1+a^2+b^2} + \sqrt{1+c^2+d^2},\]
with
\begin{eqnarray*}
	a =& J(z_{i,j+1}-z_{i,j}) , \qquad &b = J(z_{i+1,j+1}-z_{i,j+1})  , \\
	c =& J(z_{i+1,j+1}-z_{i+1,j}) , \quad  &d = J(z_{i+1,j}-z_{i,j}).
\end{eqnarray*}

We use the interpolatory 2D framework and the MR/OPT strategy to solve
this  nonlinear, convex, optimization problem,  with
$J_L=128$,   $L=5$,   $tol_M = tol_{\cD} = 10^{-6}$ and 
{$z^{L,0}_{i,j} = \frac{i}{J}\left (1-\frac{i}{J}\right )$,
	$i,j\in\{0,1,\ldots,J\}$}, which complies with the specified boundary conditions. The solution obtained with MR/OPT and
$\cal D$={\tt fminunc} with the above parameters is represented in
\cref{fig:minimal_morebv_solutions}-left.

In  this case, we do not know the exact solution and in
\cref{fig:minimal_fminunc_tol6}-left  we only plot  the max-norm  of the difference between sub-optimal solutions.  The
observed decay rate is $r \approx 2$, for all prediction schemes. To explain
this apparent contradiction with the results in \cref{cor:distance1},  we carry out a  numerical
inspection of the derivatives of the  solution shown in
\cref{fig:minimal_morebv_solutions}-left.  We observe large spikes in
third order derivatives at the four corners of the unit square,
which 
is an indication  that the obtained discrete data can only be associated to a $\cC^2$ function.  Hence,
the observed decay rate is, in
fact, the one predicted by the theoretical result in
\cref{cor:distance1} since,  in this case, the interpolation error corresponding to
an $n$-th degree polynomial $(n\geq 1)$ is only $O(h^2)$, with $h$ the
distance between the points in the grid.

The plot in \cref{fig:minimal_fminunc_tol6}-right shows, again, the
increase in efficiency of the MR/OPT strategy for $n=3,5$, which we
assume to be related to the fact that the errors between consecutive
sub-optimal solutions are smaller than for $n=1$, hence the
corresponding auxiliary optimization problems require less
iterations. For $n=5$, the solution of the original optimization
problem, shown in \cref{fig:minimal_morebv_solutions}-right,  is obtained with less than 2 \% of the total number of
evaluations of $F$, with respect to the direct aplication of ${\cal D}$.
\begin{figure}[!h]
	\centering
	\begin{tabular}{cc} 
		\includegraphics[width=0.45\textwidth,clip]{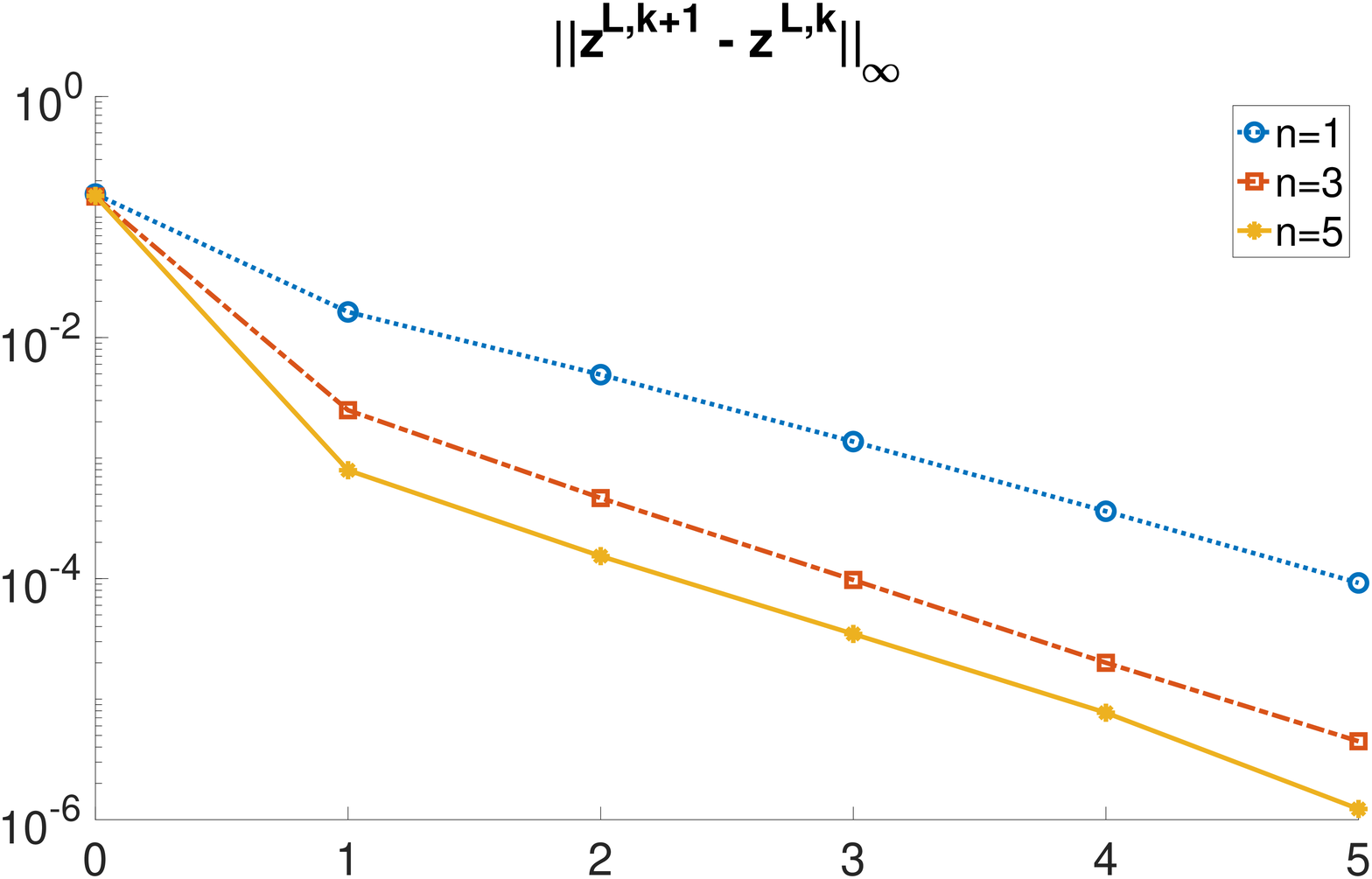} &
		\includegraphics[width=0.45\textwidth,clip]{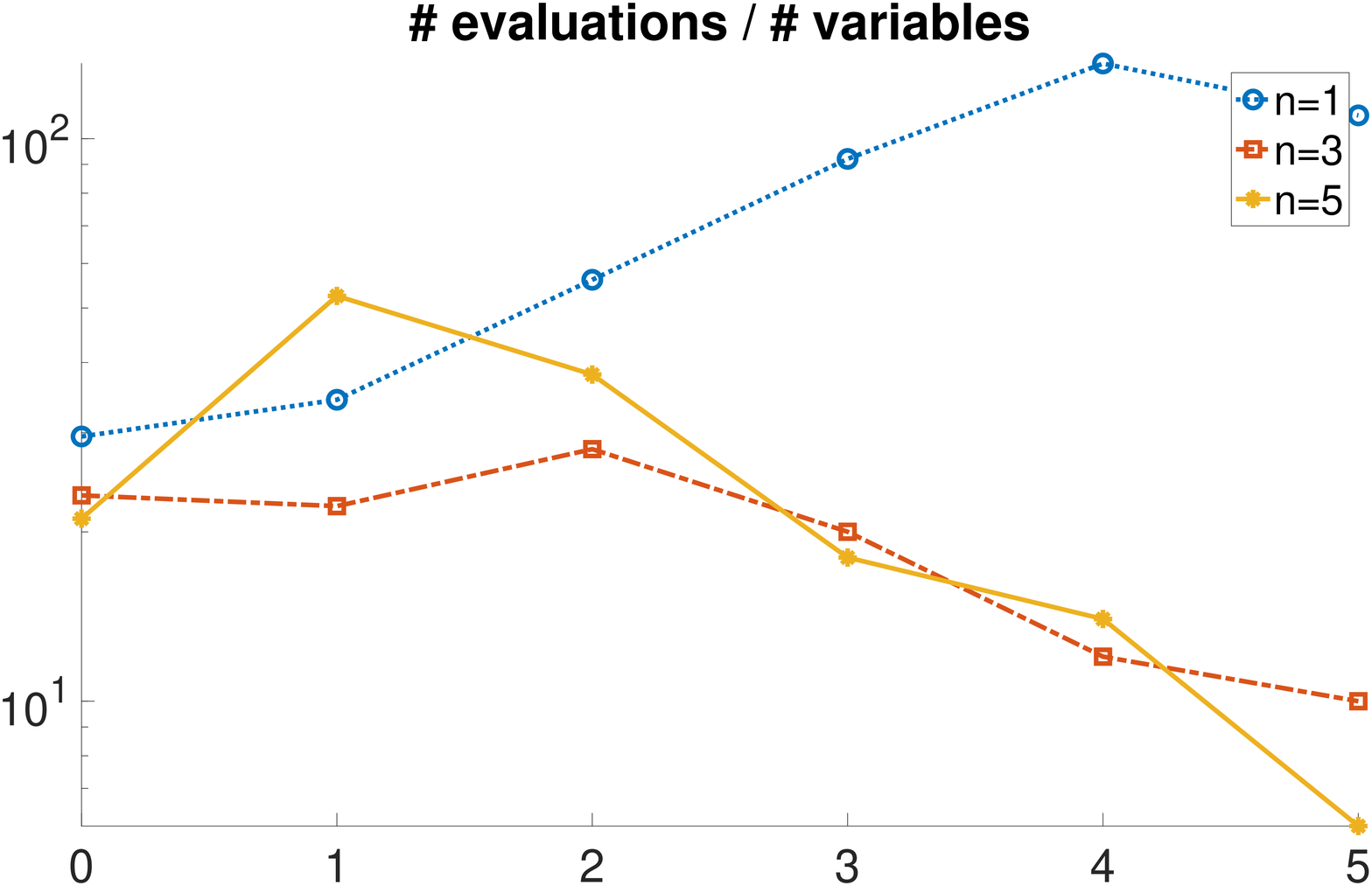} \\
	\end{tabular}
	\begin{tabular}{|c|c|c|c|c|}
		\hline
		interpolation degree & n=1 & n=3 	& n=5 & Direct-$\cD$\\\hline
		total \# evaluations &       2\,417\,132   &   235\,771    &   180\,990 & 10\,226\,103 \\\hline
	\end{tabular}
	\caption[Minimal surface problem]{Minimal surface problem. $L=5$, $tol_M = tol_{\cD} = 10^{-6}$ and ${\cal D}$ is
		\texttt{fminunc}.  \label{fig:minimal_fminunc_tol6}}
\end{figure} 

Our last test concerns  a nonlinear
non-convex optimization problem from  \cite[section 5.1.4]{FP14}: the MOREBV problem,
which is obtained from a discretization on
a uniform grid  on $\Omega=[0,1]\times [0,1]$ of the following BVP
\begin{equation} \label{eq:morebv}
\begin{array}{rcl}
-(u_{x x}(x,y)+ u_{yy}(x,y))+ \frac12 (u(x,y)+x+y+1)^3& = &0, \quad (x,y) \in \text{int}(\Omega),\\[2pt]
u(x,y)& = &0, \quad (x,y) \in {\partial \Omega}. \end{array} 
\end{equation}
Using the classical 5-point discretization of the Laplacian, the
resulting system of nonlinear equations can be rewritten as a
nonlinear  least-squares problem with a non-convex  objective function
given by the expression (see \cite{FP14} and references therein). The
theoretical results of \cref{sec:theory} do not apply in this case.
\begin{multline}
	F(z):= \sum_{i,j=1}^{J-1} \Big ( (4z_{i,j}-z_{i-1,j} - z_{i+1,j}-z_{i,j-1} - z_{i,j+1})  \\
	+ \frac1{2J^2}\left ( z_{i,j} + i/J+j/J+1\right )^3 \Big )^2.
\end{multline}
We solve this problem by using the MR/OPT strategy within the 2D
interpolatory MR framework used in the previous two test cases. As before, setting $\bar z= \vec{0}$  leads to a
sequence of sub-optimal solutions satisfying the boundary
conditions.  The solution obtained  with the same parameters as in the
previous test case ($n=5$) is shown in
\cref{fig:minimal_morebv_solutions}-right. 

In \cref{fig:morebv_papini_fminunc_tol6}  we show the results obtained
for this test case. 
The left plot in this figure shows that  when taking $n=1$ the distance
between consecutive 
sub-optimal solutions does not decay\footnote{The set of sub-optimal solutions can be found in the Appendix.}, but it does decrease for $n=3,5$
(at a rate $r \approx 2$).  As a consequence, the  number of
calls to  the objective function increases dramatically for $n=1$,
while it remains moderate for $n=3,5$. More research is required to
explain the behavior of the MR/OPT strategy for this non-convex
optimization problem, for which no theoretical results are provided in this paper. We  have observed  that, as in the MINS problem, certain  third order  finite differences of $z^{L,L+1}$ also  exhibit peaks in the corners of $\Omega$. We do remark that  the solution
corresponding at $n=5$ has been obtained with less than $0.3\%$ of the
total number of functional evaluations than for $n=1$ (which is, at least, less than 69\% of the  total number of evaluations for the
direct use of ${\cal D}$).

\begin{figure}[!h]
	\centering
	\begin{tabular}{cc} 
		\includegraphics[width=0.45\textwidth,clip]{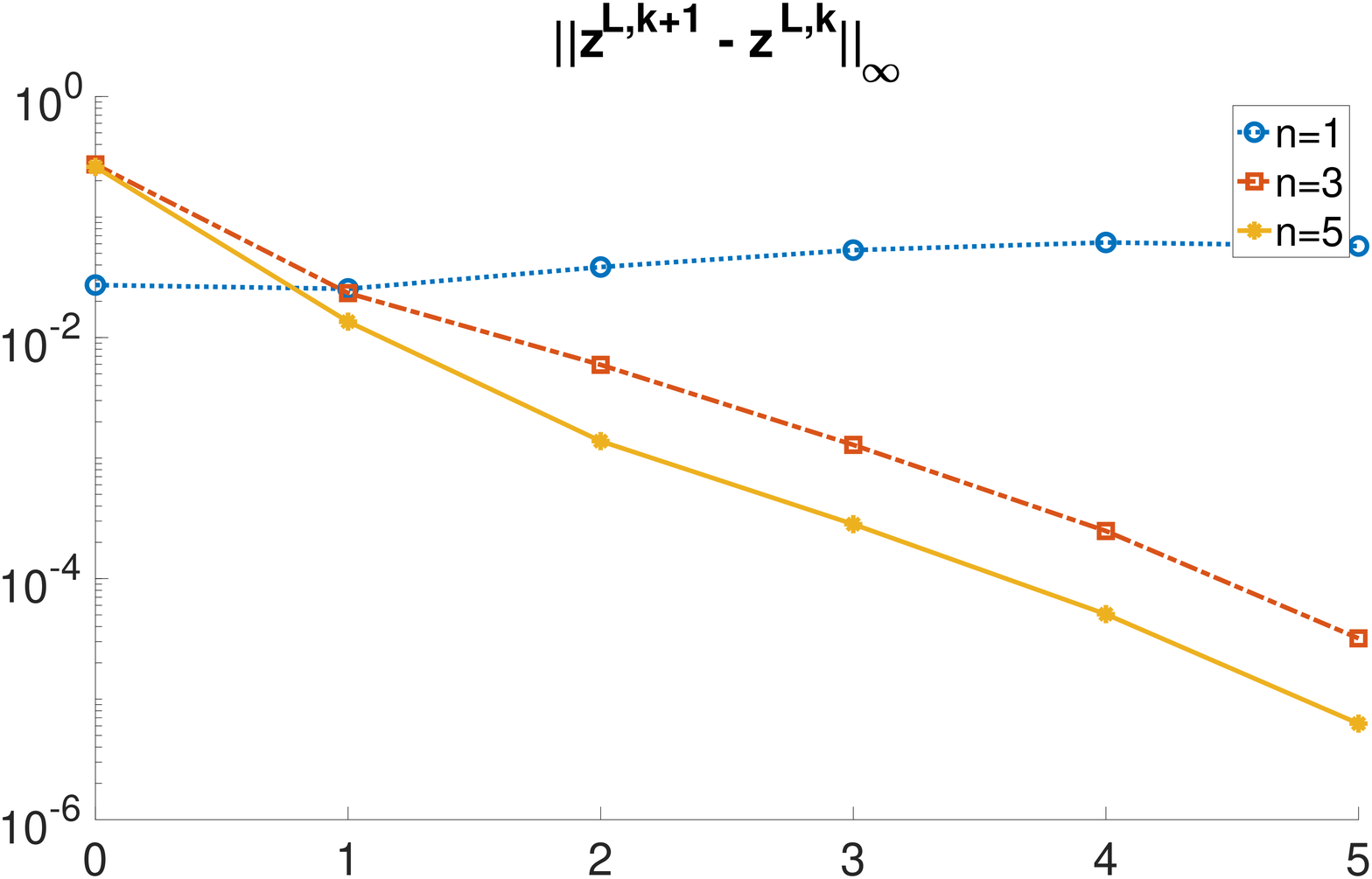} &
		\includegraphics[width=0.45\textwidth,clip]{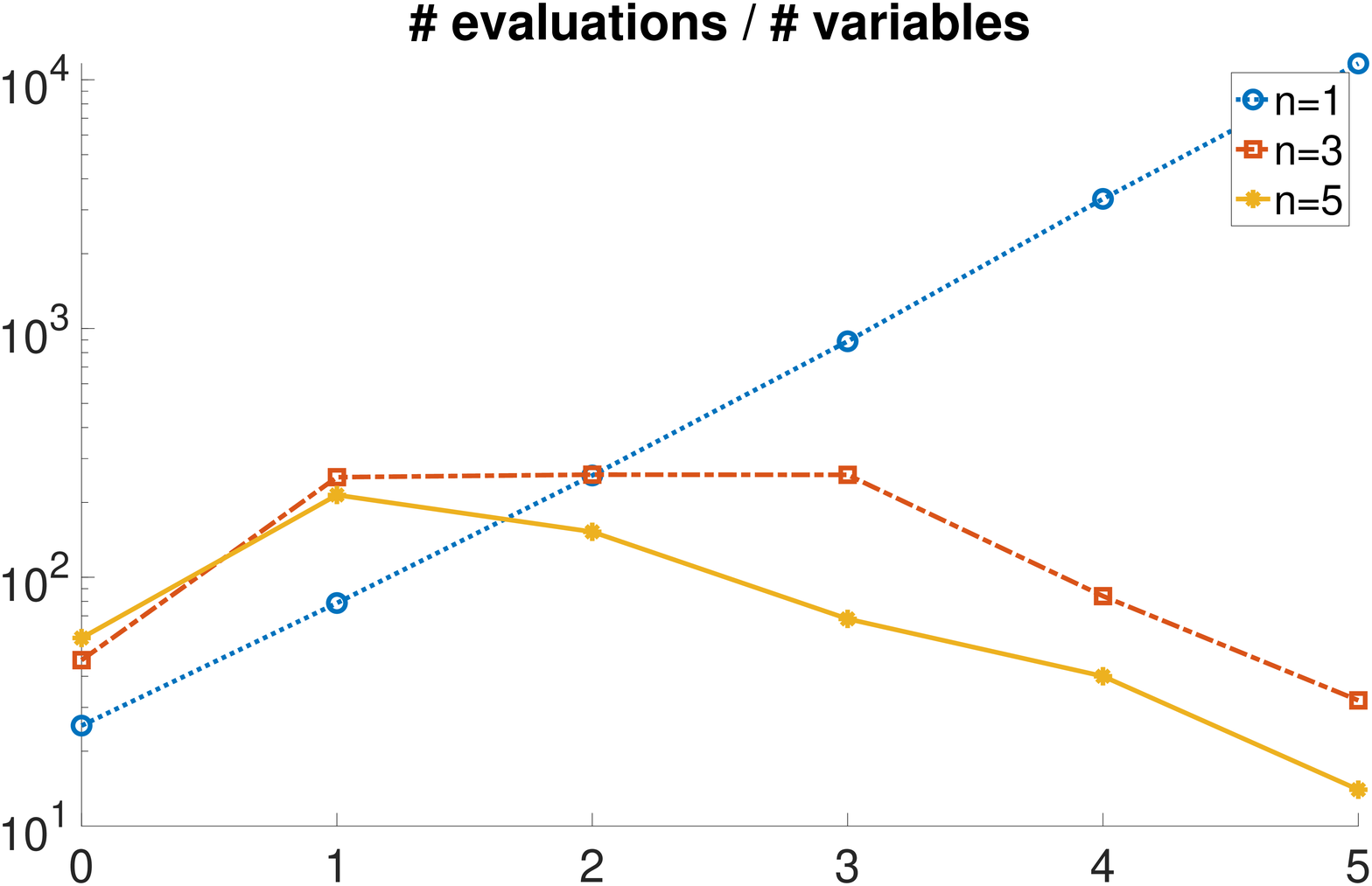}
	\end{tabular}
	\begin{tabular}{|c|c|c|c|c|}
		\hline
		interpolation degree & n=1 & n=3 	& n=5 & Direct-$\cD$\\\hline
		total \# evaluations &     201\,583\,677  &    1\,168\,621   &   495\,258 & 294\,879\,519 \\\hline
	\end{tabular}
	\caption[MOREBV problem]{ MOREBV problem. $L=7$, $tol_M = tol_{\cD} = 10^{-6}$ and ${\cal D}$ is
		\texttt{fminunc}.  \label{fig:morebv_papini_fminunc_tol6}}
\end{figure}

\begin{figure}[!h]
	\centering
	\includegraphics[width=0.45\textwidth,clip]{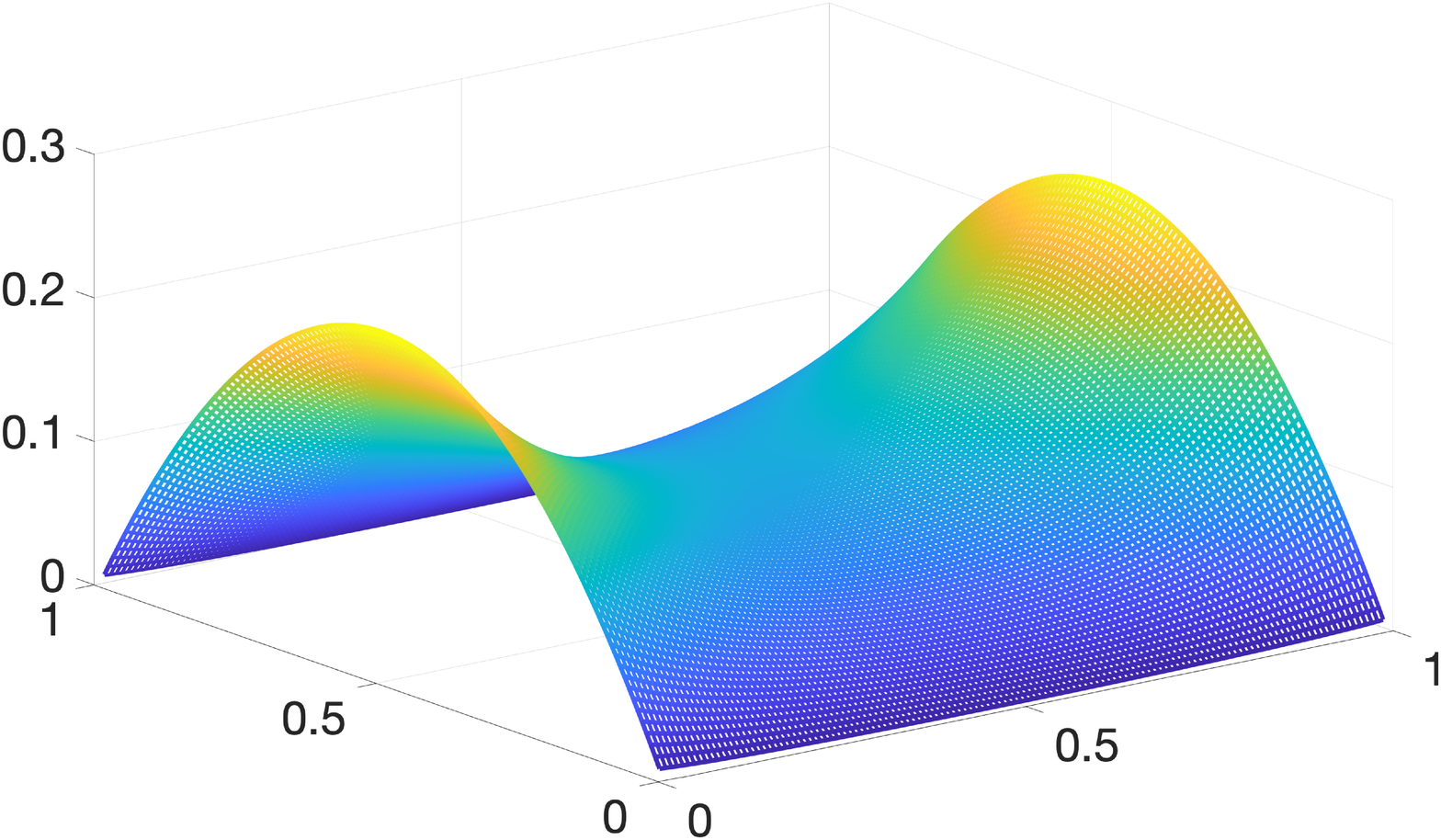}
	\includegraphics[width=0.45\textwidth,clip]{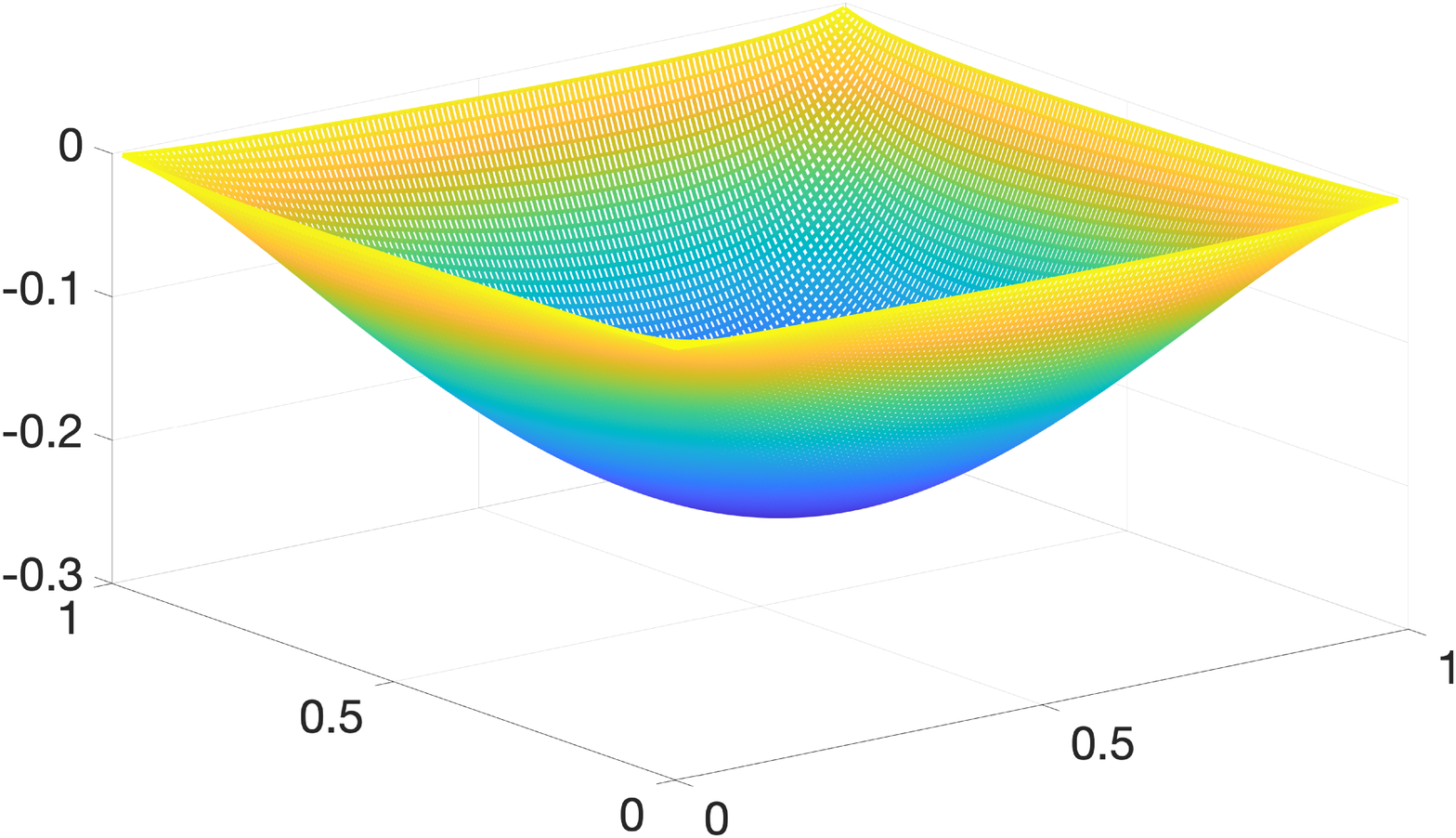}
	\caption[Optimal solutions]{The computed solution, $z^{L,L+1}$, of the minimal surface (left) and the MOREBV (right) problems taking $n=5$.  \label{fig:minimal_morebv_solutions}}
\end{figure}

\section{Conclusions And Perspectives} \label{sec:conclusions}

We have described and analyzed a multilevel  strategy  (MR/OPT)  which can be used to reduce the
cost of solving large scale 
optimization problems. MR/OPT uses  Harten's MRF to define 
a hierarchy of auxiliary problems that provide a
sequence of {\em sub-optimal solutions}, one at each resolution level, so that
at the
last step  the full (large-scale) optimization problem is solved, but with an 
initial guess that is much closer to the desired solution than the original
initial guess (which is assumed to be provided by the end-user). 

The solutions of the auxiliary problems depend on the prediction operators of the chosen MRF. The results in this paper
clearly demonstrate that the accuracy of the prediction scheme plays a
significant role in the overall performance of the MR/OPT strategy,
when the
initial guess and the solution can be interpreted as point-value
discretizations of sufficiently smooth functions.

Harten's MRF provides a
flexible and versatile multilevel 
structure that allows to consider both the {optimization
	tool}  and the objective function \cref{eq:problema} as black
boxes, which is an important asset  in engineering applications. If
an optimization problem such a \cref{eq:problema} can be solved with a
specific optimization tool, ${\cal D}$, MR/OPT will reduce the cost of
finding its solution. For convex optimization problems, we have
presented several theoretical results on the properties and the
performance of the MR/OPT strategy, which have been numerically
validated in a series of examples from the specialized literature. 
More analytic work is necessary in order to determine the properties
of the MR/OPT strategy in more general situations,  as well as the influence of
the underlying minimization code or the specification of constraints.

The strategy can be improved in various ways, that we intend to
investigate in future papers. In particular, it might  be
advantageous to consider a level-dependent tolerance
criterion for the optimizer $\cD$: large tolerances in coarse levels
and small ones in fine levels. Following this idea, to consider
different optimizers might also be profitable: e.g. global searchers in the
first levels and, later, fast/local optimizers. Another line of
research concerns the use of   nonlinear reconstruction techniques
instead of  the linear ones used in this paper,  since they might  be more
efficient in problems where the solution presents discontinuities or
it is important to preserve  specific features of the solution, such as monotonicity or convexity.

\bibliographystyle{plain}

\appendix

\section{On the Property S for subdivision schemes}
For completeness, in this section we provide a proof, without entering into too many details, which is well-known in subdivision schemes theory.
\begin{lemma} \label{lem:subdivision_ineq}
	Let ${\cal P}=(P_k^{k+1})_{k=0}^\infty$ be  the sequence of
	linear, interpolatory,  prediction operators defined in
	\Cref{sec:1D-int}. 
	Then there exists $C>0$ such that
	\[ \|\varepsilon^\ell\|_\infty \leq \|P_\ell^k \varepsilon^\ell \|_\infty \leq C \|\varepsilon^\ell\|_\infty, \qquad \forall 0\leq\ell\leq k.\]
\end{lemma}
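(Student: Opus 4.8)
The plan is to prove the two inequalities separately, exploiting the interpolatory nature of the prediction operators for the lower bound and a uniform stencil bound for the upper bound. For the \textbf{lower bound}, I would use the \emph{interpolation condition}: because each $P_{k-1}^k$ satisfies $(P_{k-1}^k z^{k-1})_{2i}=z^{k-1}_i$, the recursive predictor $P_\ell^k$ reproduces the coarse samples exactly at the corresponding fine grid points. More precisely, the grid ${\cal G}^\ell$ is a subset of ${\cal G}^k$, and for every $j\in\{0,\dots,J_\ell\}$ the entry of $P_\ell^k\varepsilon^\ell$ at the fine index corresponding to the $\ell$-level node $j$ equals $\varepsilon^\ell_j$. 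Hence $\|P_\ell^k\varepsilon^\ell\|_\infty\ge \max_j|\varepsilon^\ell_j|=\|\varepsilon^\ell\|_\infty$, which is the left inequality (with constant $1$, matching the claim $d_1=1$ made in \Cref{sec:1D-int}).

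For the \textbf{upper bound}, the key observation is that a single prediction step is a bounded operator with a bound independent of the level: from \cref{eq:Pkdef,eq:n-coeff} (together with the boundary rules \cref{eq:b-predn3,eq:b-predn5}), every entry of $P_{k-1}^k z^{k-1}$ is a fixed finite linear combination of entries of $z^{k-1}$ with coefficients that do not depend on $k$, so $\|P_{k-1}^k\|_\infty\le \lambda$ for a constant $\lambda$ equal to the maximal absolute row sum of the (finitely many distinct) stencils. Iterating, $\|P_\ell^k\varepsilon^\ell\|_\infty\le \lambda^{\,k-\ell}\|\varepsilon^\ell\|_\infty$, which already gives a level-dependent bound but not a uniform one. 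To upgrade this to a uniform constant $C$, I would invoke the standard fact from subdivision theory that the Deslauriers--Dubuc schemes of degree $n=1,3,5$ reproduce constants: the row sums of every stencil equal $1$ (one checks $\beta_1=1/2$ for $n=1$; $2(\beta_1+\beta_2)=2(9/16-1/16)=1$ for $n=3$; $2(\beta_1+\beta_2+\beta_3)=2(150-25+3)/256=1$ for $n=5$; and the boundary rules in \cref{eq:b-predn3,eq:b-predn5} are likewise affine-reproducing by construction). Constant reproduction means $P_\ell^k$ maps the constant sequence to the constant sequence, so the iterated operator, while not a contraction in $\|\cdot\|_\infty$, has a convergent refinement process; the uniform bound $\|P_\ell^k\|_\infty\le C$ then follows from the convergence of the subdivision scheme (equivalently, from boundedness of the limit basic functions), which is the classical result referenced via \cite{Dyn92,DGH03}.

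The \textbf{main obstacle} is the last step: passing from the trivial per-level bound $\lambda^{k-\ell}$ to a constant $C$ uniform in $\ell,k$. This is exactly where the convergence theory of subdivision schemes enters — one needs that the cascade of prediction operators converges to a bounded limit function (the fundamental solution of the interpolatory scheme), so that the sup-norm of an $(k-\ell)$-fold refined sequence is controlled by the coarse data times a fixed constant, not by $\lambda^{k-\ell}$. Since the statement explicitly says it will be proved ``without entering into too many details'' and the convergence theory is ``outside the scope of this paper'' (cf.\ the footnote in \Cref{sec:dmf}), I would cite the relevant results of \cite{Dyn92,DGH03} for the convergence of the Deslauriers--Dubuc schemes on the interval (accounting for the special boundary rules, whose analysis is carried out in \cite{DGH03,ADH98a}), and conclude that $C:=\sup_{0\le\ell\le k}\|P_\ell^k\|_\infty<\infty$, with the lower bound $1$ already established directly. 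Combining the two bounds yields the stated double inequality and hence \emph{property S} with $d_1=1$, $d_2=C$.
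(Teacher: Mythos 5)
Your proposal is correct and follows essentially the same route as the paper's own proof: the lower bound via the interpolation condition (the coarse samples are reproduced exactly on the sub-grid, giving constant $1$), and the upper bound by invoking the convergence of the associated subdivision process and the uniform boundedness of the limit basis functions, with the technical details deferred to \cite{Dyn92,DGH03,ADH98a}. The only point the paper makes slightly more explicit is that, besides boundedness of each limit function $\phi_i^\ell$, one also uses their uniformly compact supports (so that at most $2n$ of them overlap at any point) to obtain a constant independent of both $\ell$ and $k$.
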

\begin{proof}
	Any interpolatory prediction operator fulfills
	\[ \|\varepsilon^\ell\|_\infty = \sup_{0\leq i \leq J_\ell} |\varepsilon_i^\ell |  = \sup_{0\leq i \leq J_\ell} |(P_\ell^{\ell+1} \varepsilon^\ell)_{2i} | \leq \sup_{0\leq i \leq J_{\ell+1}} |(P_\ell^{\ell+1} \varepsilon^\ell)_{i} | = \|P_\ell^{\ell+1} \varepsilon^\ell \|_\infty.\]
	Applying this inductively, we obtain  the first inequality 
	\[ \|\varepsilon^\ell\|_\infty  \leq \|P_\ell^{\ell+1} \varepsilon^\ell \|_\infty \leq \| P_{k-1}^k \cdots  P_{\ell}^{\ell+1} \varepsilon^\ell \|_\infty = \|P_\ell^k \varepsilon^\ell\|_\infty.\]
	
	The second inequality is a consequence of the fact that the
	subdivision process associated to the recursive application of
	the predictions operators is  convergent (see
	e.g. \cite{Dyn92_copy, ADH98a_copy, DGH03_copy}), which means that  for each
	vector $\varepsilon^\ell$ there exists a continuous function,
	which we denote by $P_\ell^\infty
	\varepsilon^\ell\in\cC([0,1],\R)$, such that 
	\[ \lim_{\ell <m\to\infty} \sup_{0\leq i \leq J_m} |(P_\ell^m
	\varepsilon^\ell)_i - (P_\ell^\infty \varepsilon^\ell)(h_m i)|
	= 0, \quad \]
	with  $h_m = J_m^{-1}$ being the  grid spacing at $m$-level.
	For interpolatory prediction operators	$ (P_\ell^k \varepsilon^\ell)_i = (P_\ell^\infty
	\varepsilon^\ell)(h_k i)$, $ 0\leq i \leq J_k$, $0 \leq k$, by  the
	interpolation condition,  hence 
	\[ \|P_\ell^k \varepsilon^\ell \|_\infty  = 
	\sup_{0\leq i \leq J_k} |(P_\ell^k \varepsilon^\ell)_{i} |  = 
	\sup_{0\leq i \leq J_k} |(P_\ell^\infty \varepsilon^\ell)(h_k i) | \leq
	\sup_{t \in [0,1]} |(P_\ell^\infty \varepsilon^\ell)(t) |.
	\]

	Since $\varepsilon^\ell =\sum_{i=0}^{J_\ell} \varepsilon_i
	\delta_i^\ell$, with  $\delta_i^\ell$ the canonical $i$-th vector in
	$\R^{N_\ell}$, and the prediction operators are linear, the result will
	follow from the properties of the limit functions 
	$\phi_i^\ell=
	P_\ell^\infty \delta_i^\ell$. It can be proven (see
	\cite{ADH98a_copy,DGH03_copy}) that these limit functions have compact
	support (narrower than $2n h_\ell$, with $n$ as in \Cref{sec:1D-int})
	and satisfy  a refinability property  which ensures that there exists $\tilde C>0$ such
	that $||\phi^\ell_i||_\infty \leq \tilde C$, $\forall i, 0 \leq i \leq
	J_\ell$, $\forall \ell \geq 0$.  Given $\varepsilon^\ell \in
	\R^{N_{\ell}}$,
	\[ P_\ell^\infty \varepsilon^\ell(t) = 
	\sum_{i=0}^{J_\ell}\varepsilon^\ell_i \phi_i^\ell(t), \quad t\in[0,1] 
	\quad  \rightarrow \quad  |(P_\ell^\infty \varepsilon^\ell)(t)| \leq
	\|\varepsilon^\ell\|_\infty \sum_{i=0}^{J_\ell} |\phi_i^\ell(t)|
	\]
	the result follows from observing that for any $t \in[0,1]$,  $\phi^\ell_i(t) \neq 0$ only for 
	at most $2n$ such functions  (see \cite{ADH98a_copy,DGH03_copy}). Hence
	for some  $C>0$, which does not depend on $\ell$, the second inequality is obtained.
\end{proof}

\section{Limit function basis of the prediction operators}

The prediction operators described in \cref{sec:1D-int} are based on $n$-th degree polynomial interpolation, with $n=1,3,5$. Since they are linear operators, the refined data can be written in terms of a basis. In \cref{fig:limit3,fig:limit5}, some limit function basis for $n=3,5$, respectively, are shown.

\begin{figure}[!h]
	\centering
	\begin{tabular}{cc} 
		\includegraphics[width=0.45\textwidth,clip]{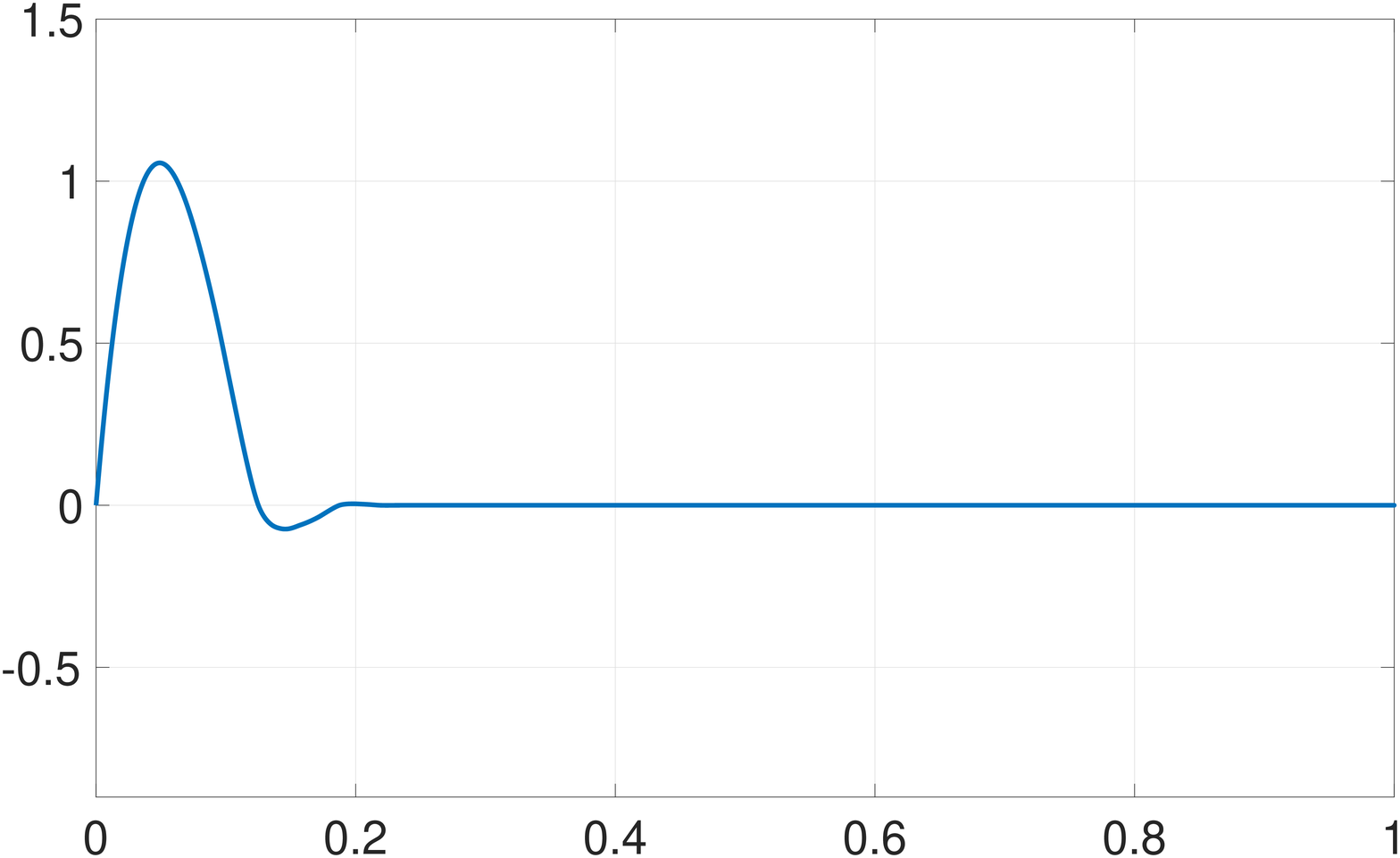} &	\includegraphics[width=0.45\textwidth,clip]{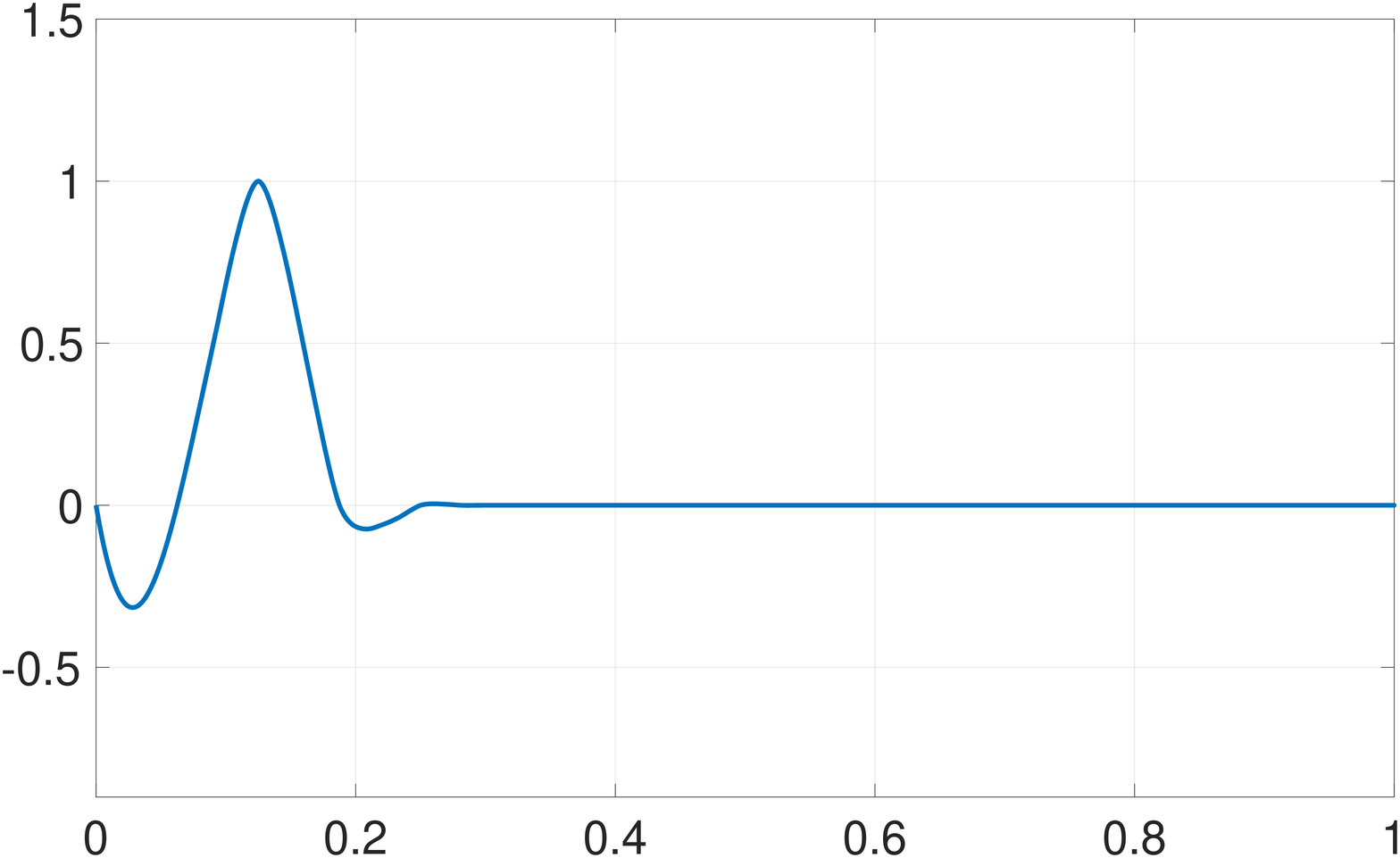} \\
		\includegraphics[width=0.45\textwidth,clip]{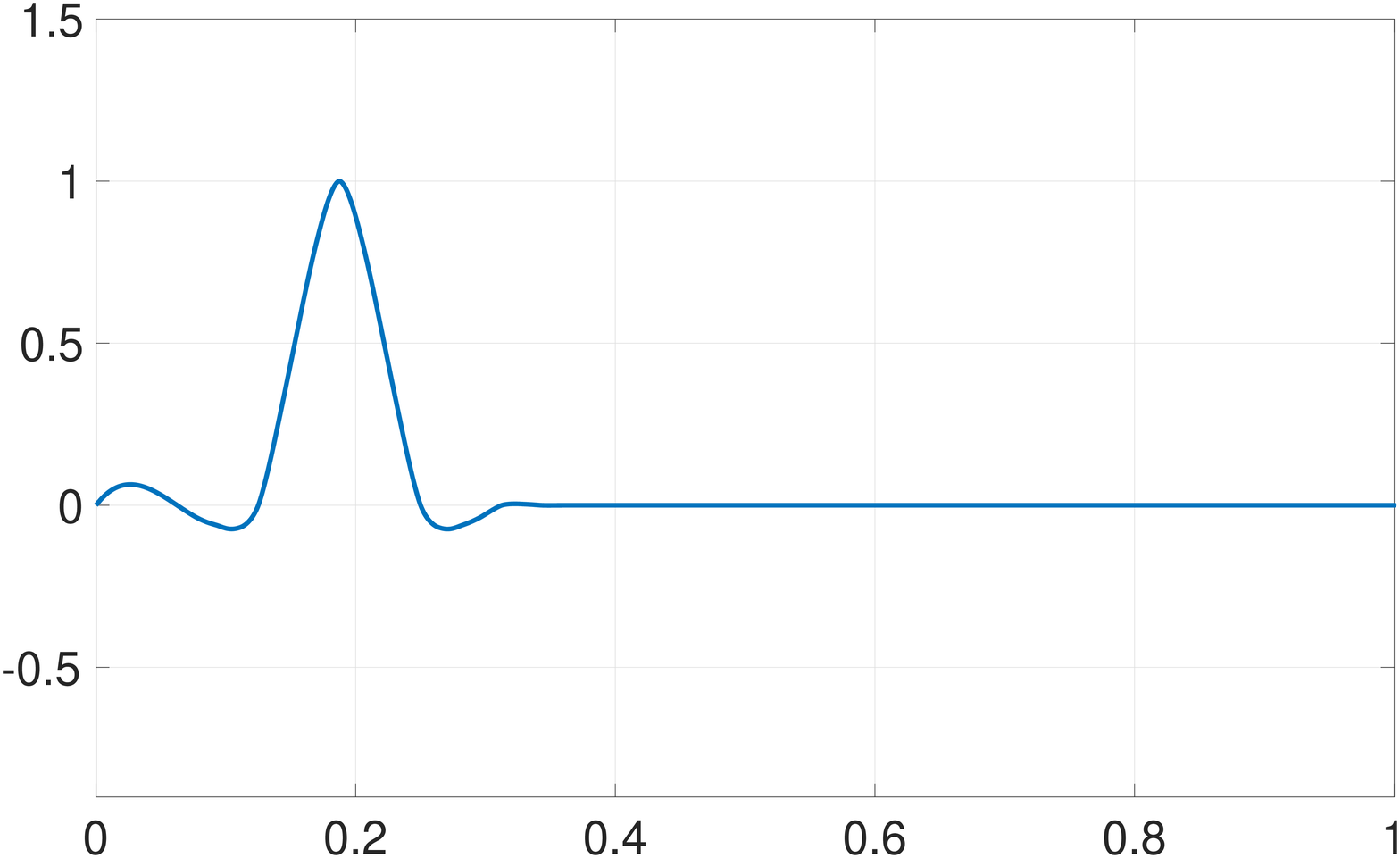} &	\includegraphics[width=0.45\textwidth,clip]{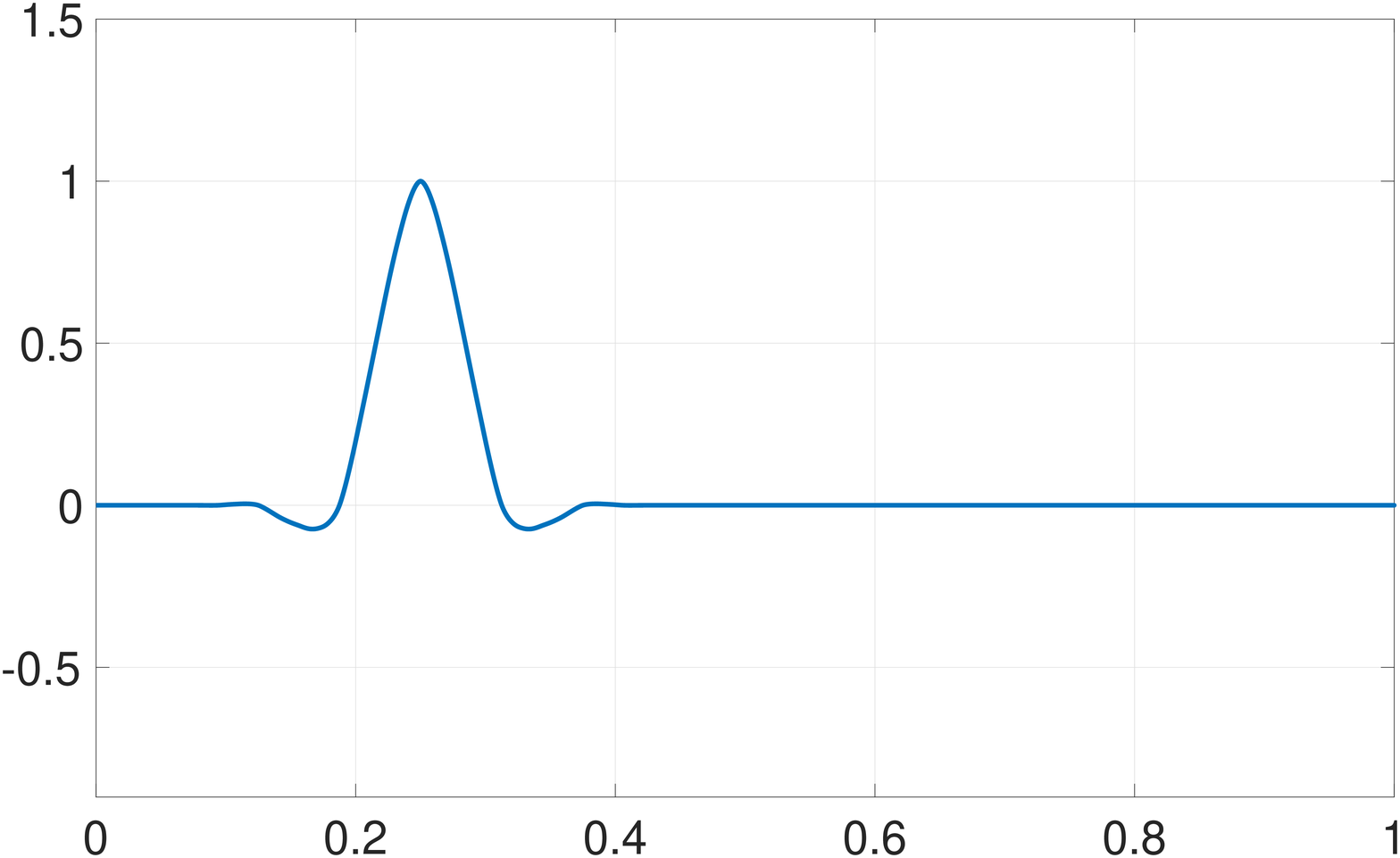} \\
		\includegraphics[width=0.45\textwidth,clip]{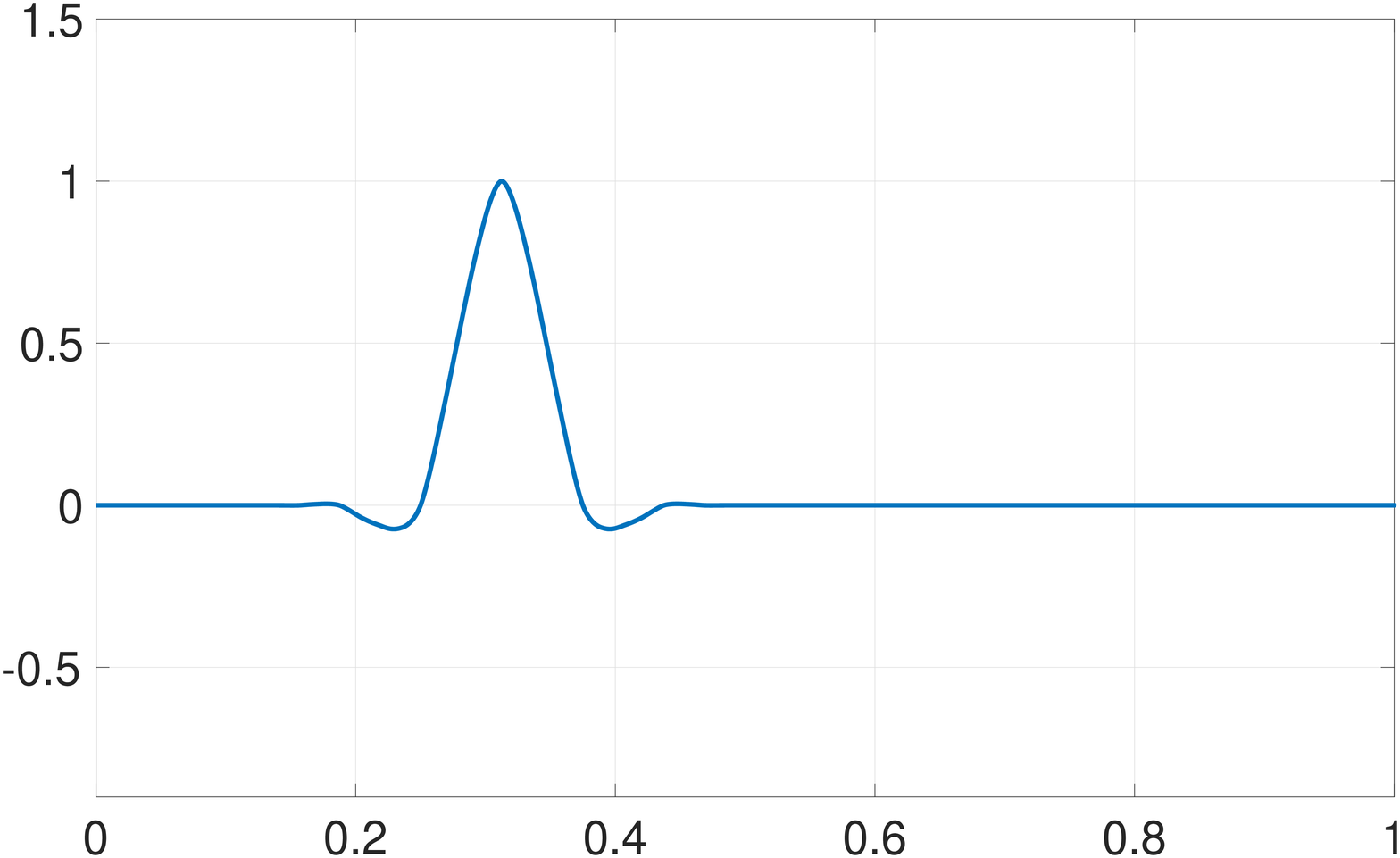} &	\includegraphics[width=0.45\textwidth,clip]{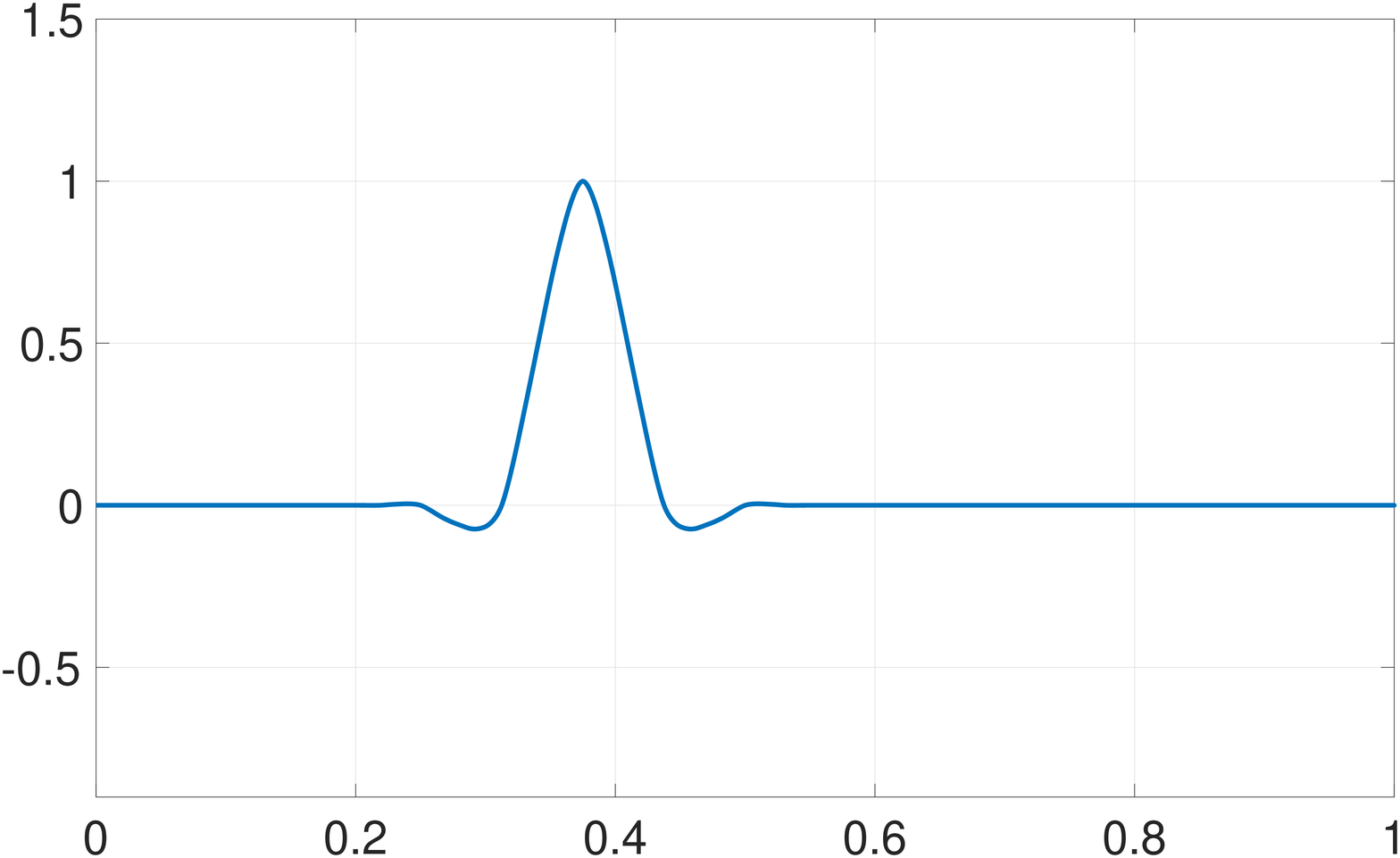}
	\end{tabular}
	\caption[Basis functions for $n=3$]{Some of the basis functions for the space of generated functions by the prediction schemes in \cref{sec:1D-int} for $n=3$. The starting level has 17 equal-spaced nodes in [0,1], and the prediction operators are applied 10 times, so the final level has grid points.} \label{fig:limit3}
\end{figure}

\begin{figure}[!h]
	\centering
	\begin{tabular}{cc} 
		\includegraphics[width=0.45\textwidth,clip]{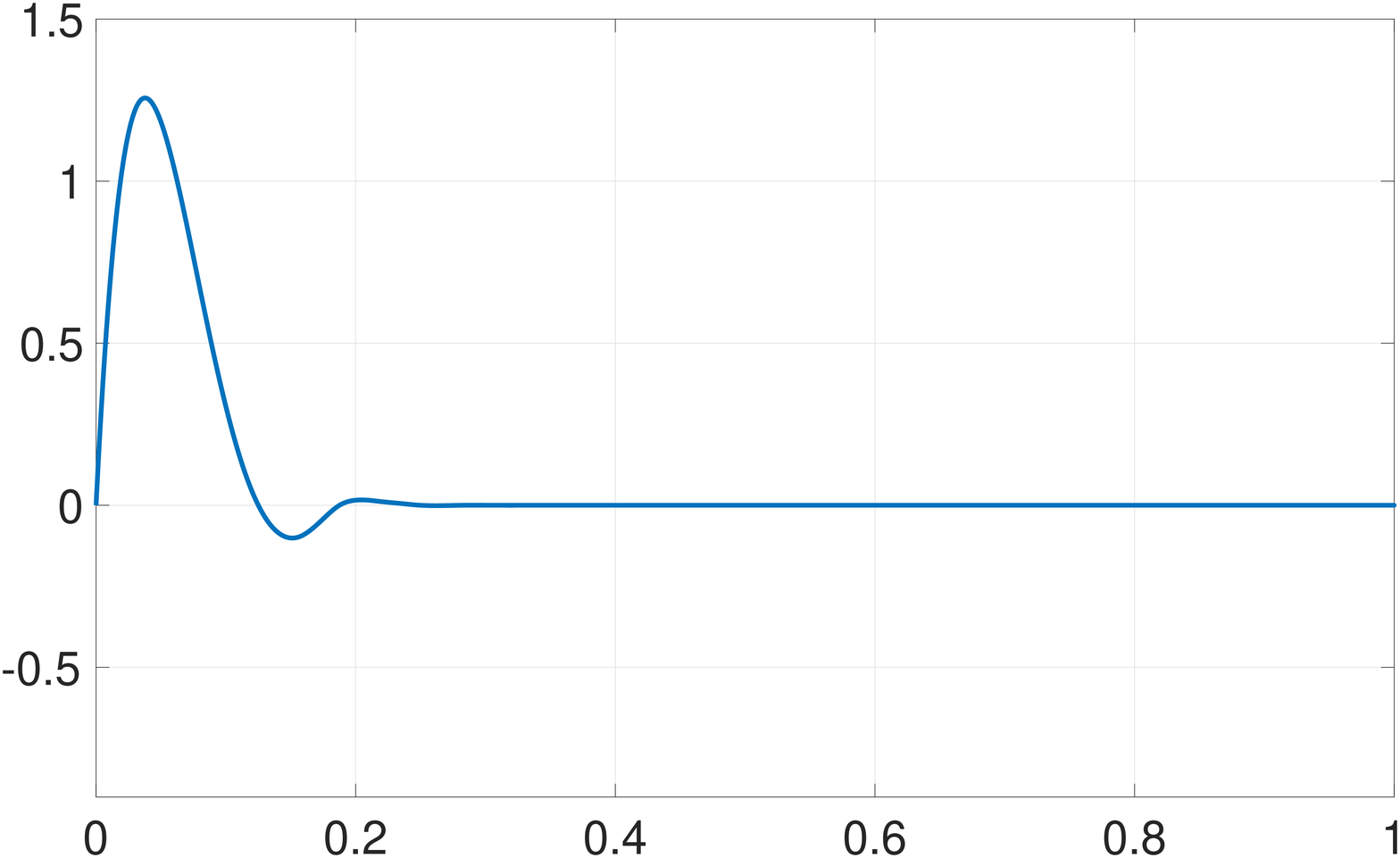} &	\includegraphics[width=0.45\textwidth,clip]{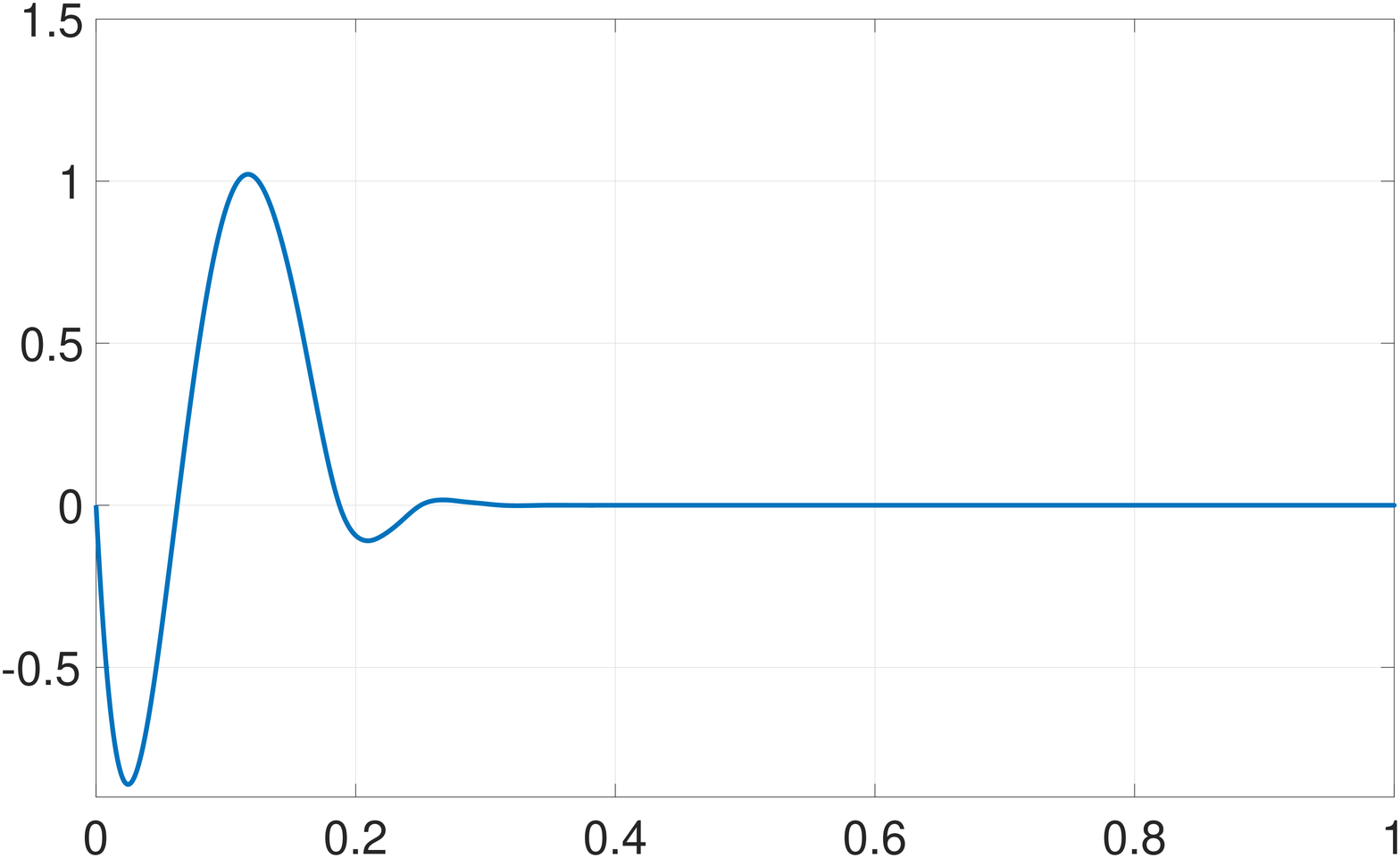} \\
		\includegraphics[width=0.45\textwidth,clip]{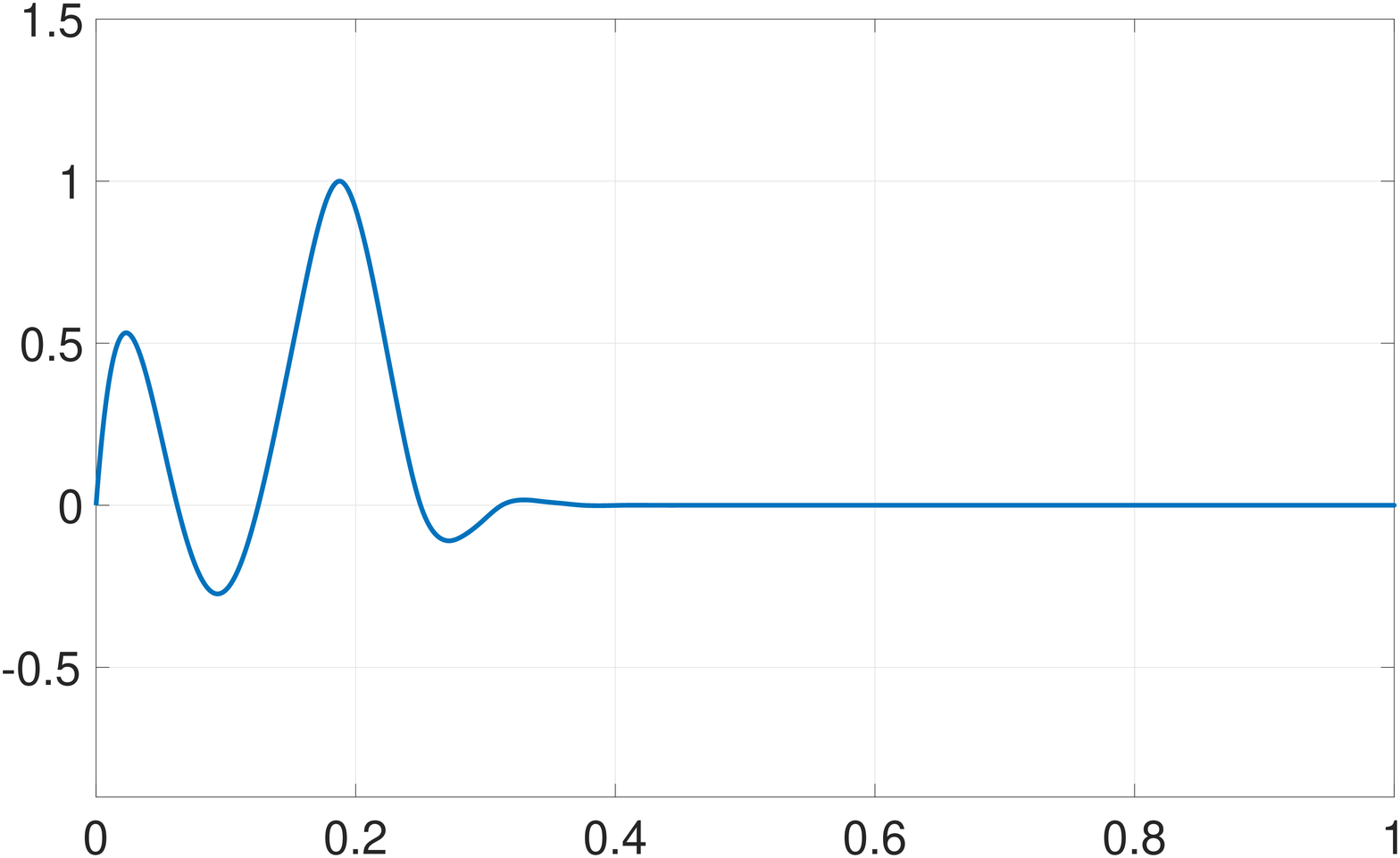} &	\includegraphics[width=0.45\textwidth,clip]{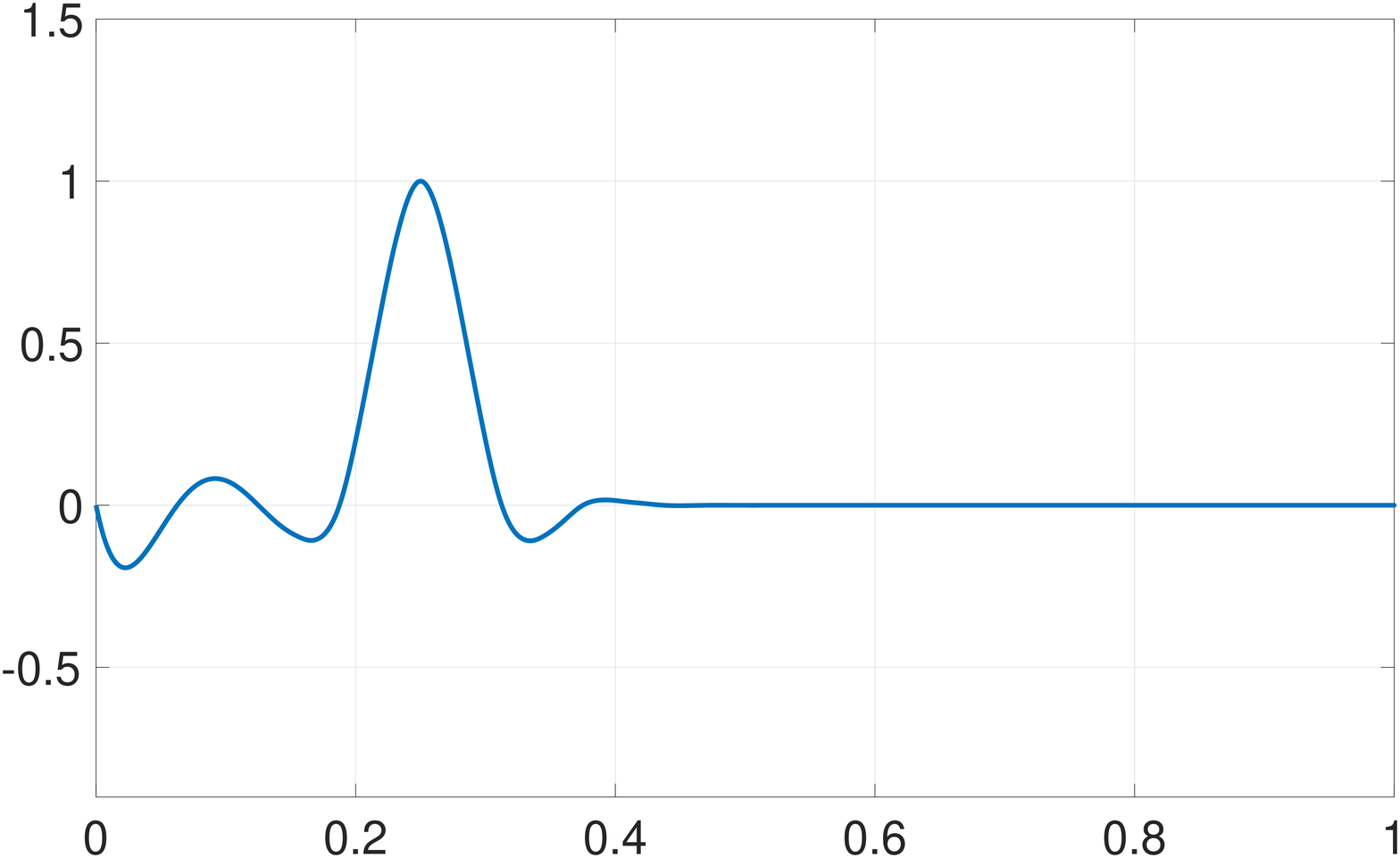} \\
		\includegraphics[width=0.45\textwidth,clip]{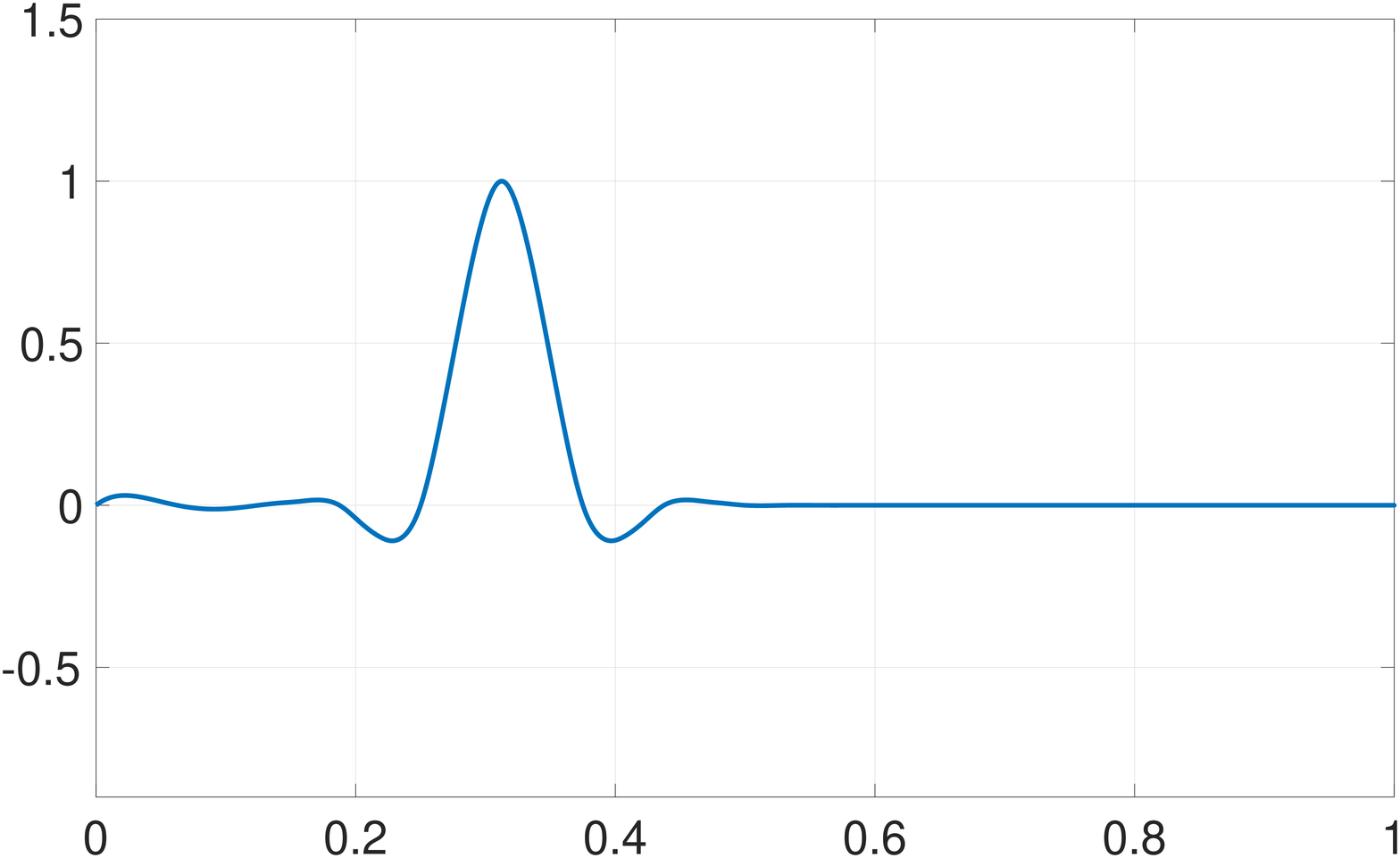} &	\includegraphics[width=0.45\textwidth,clip]{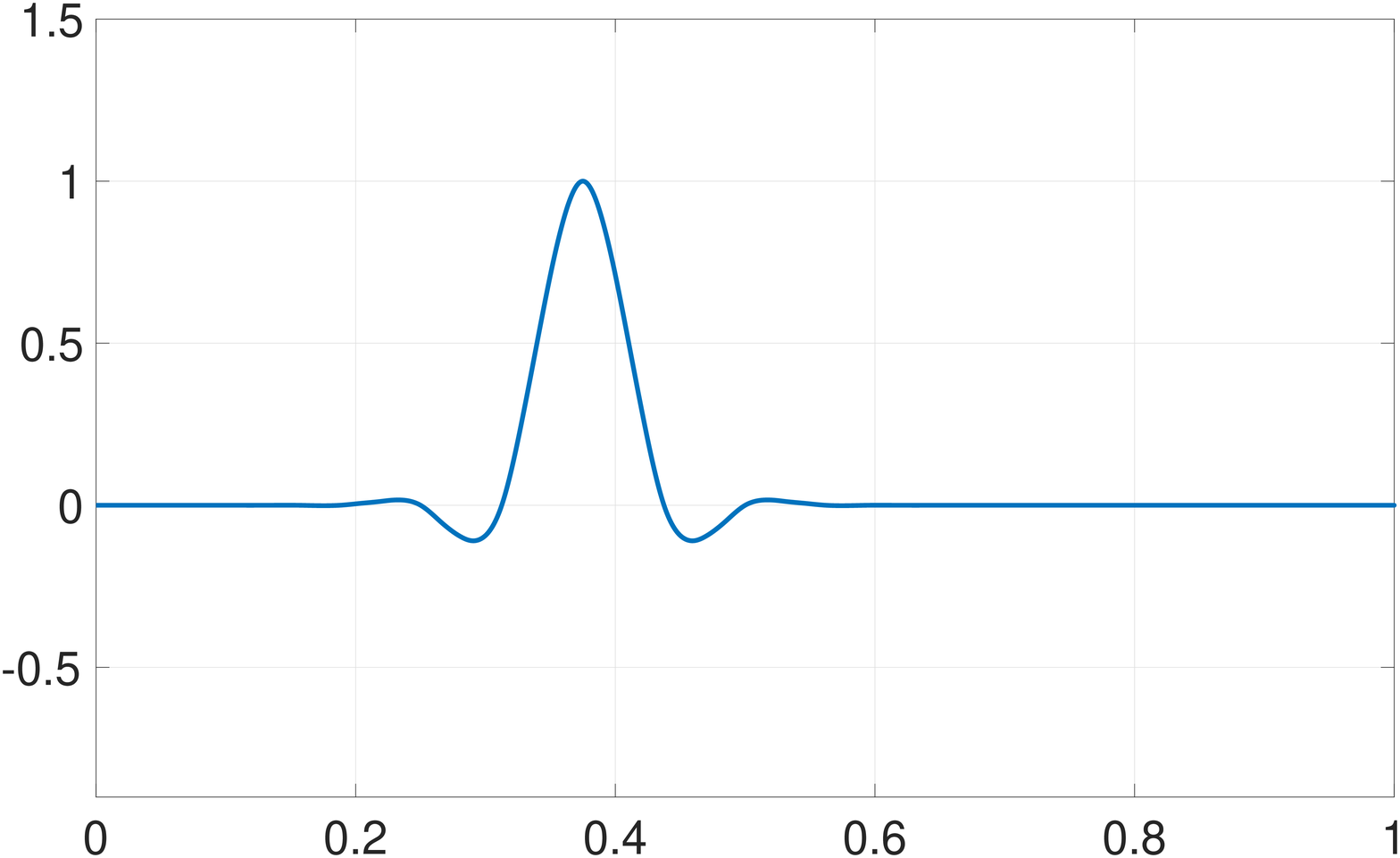}
	\end{tabular}
	\caption[Basis functions for $n=5$]{Some of the basis functions for the space of generated functions by the prediction schemes in \cref{sec:1D-int} for $n=5$. The starting level has 17 equal-spaced nodes in [0,1], and the prediction operators are applied 10 times, so the final level has 65537 grid points.} \label{fig:limit5}
\end{figure} 

\section{Smoothness of the MINS and MOREBV solutions}

In this section we discuss about the regularity of the solutions to the problems `MINS' \cref{eq:minimal} and `MOREBV' \cref{eq:morebv}.

According the boundary conditions, the derivatives $\frac{\partial^3}{\partial x^3}$ $\frac{\partial^3}{\partial y^3}$ of the solution $u$ to the continuous problem must be zero in the four corners $(0,0), (1,0), (0,1), (1,1)$. In the pictures in \cref{fig:C3} there is a sudden change in the value of the derivatives in the corners. On the one direction ($x$ or $y$, depending on the picture), the derivative is zero, while in the other direction, the absolute value grows when approaching to the corner. If there exists some continuous function which point evaluations are the discrete solution $z_{\min}$, then its third derivatives seem to be not continuous in the four corners. Hence, the solutions are not $\cC^3$ globally. For comparison, we show the graphics of $\frac{\partial^2}{\partial x^2}$ $\frac{\partial^2}{\partial y^2}$ in Figure \cref{fig:C2}, which seem to be continuous, but not differentiable.

\begin{figure}[!h]
	\centering
	\begin{tabular}{cc} 
		\includegraphics[width=0.45\textwidth,clip]{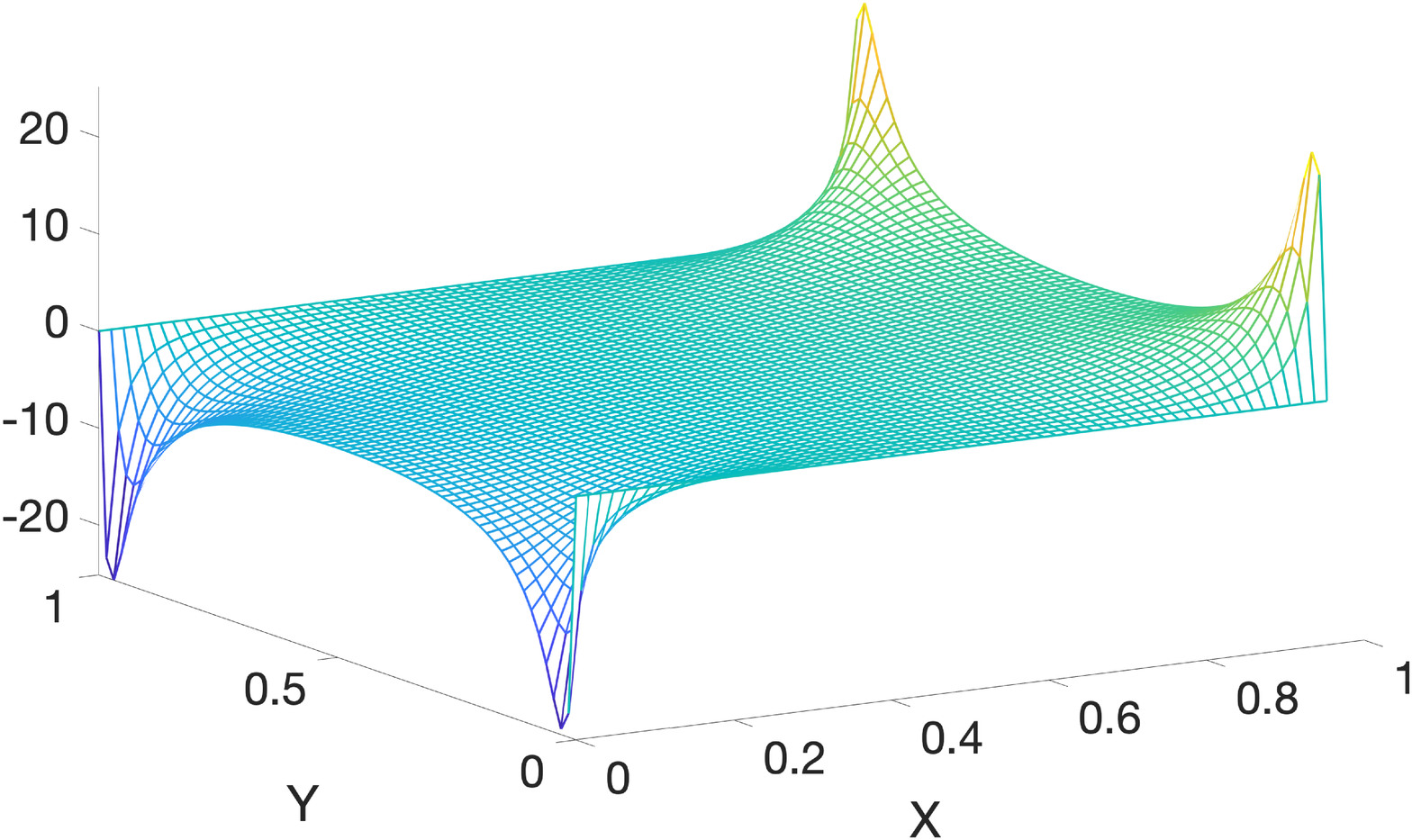} &
		\includegraphics[width=0.45\textwidth,clip]{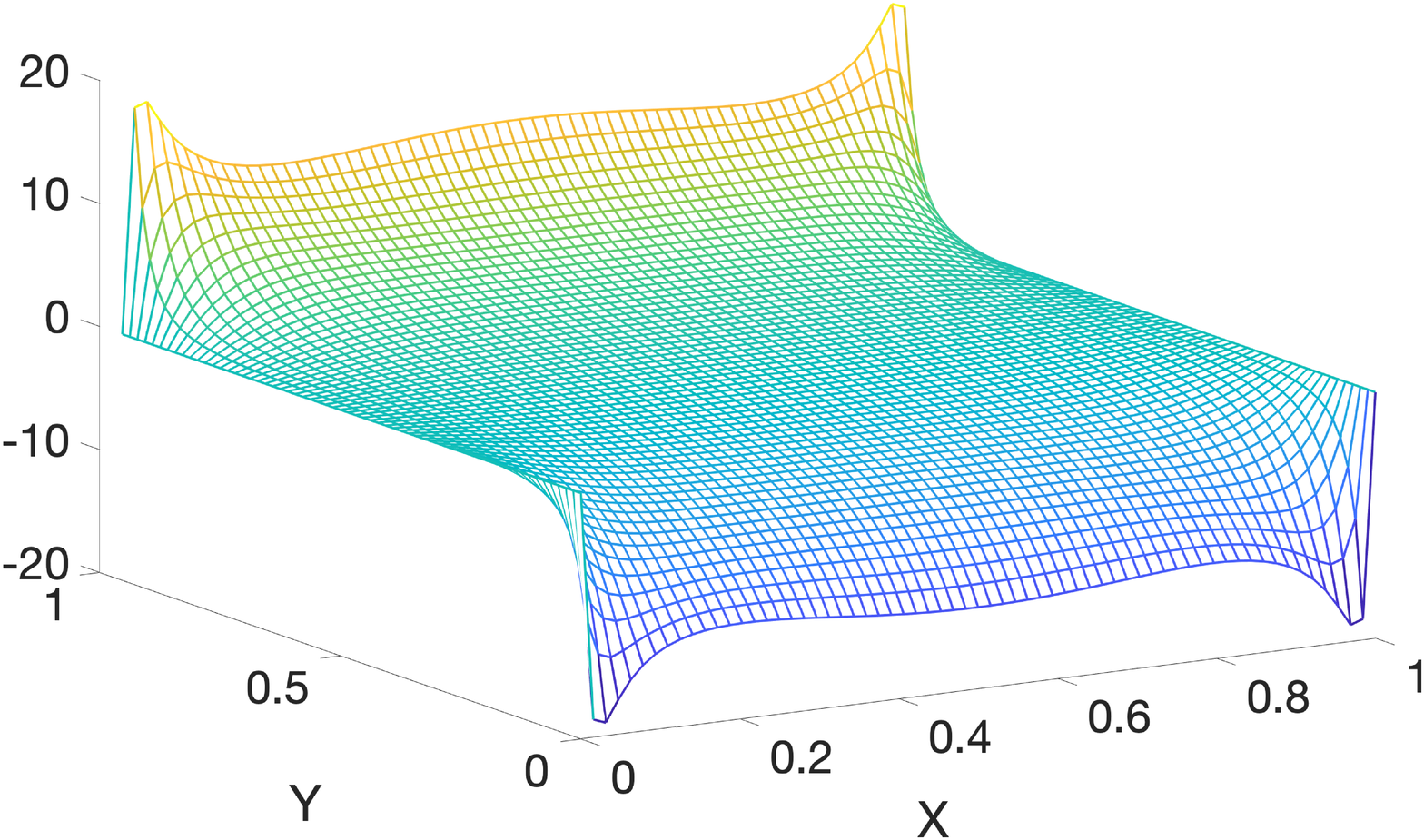} \\
		\includegraphics[width=0.45\textwidth,clip]{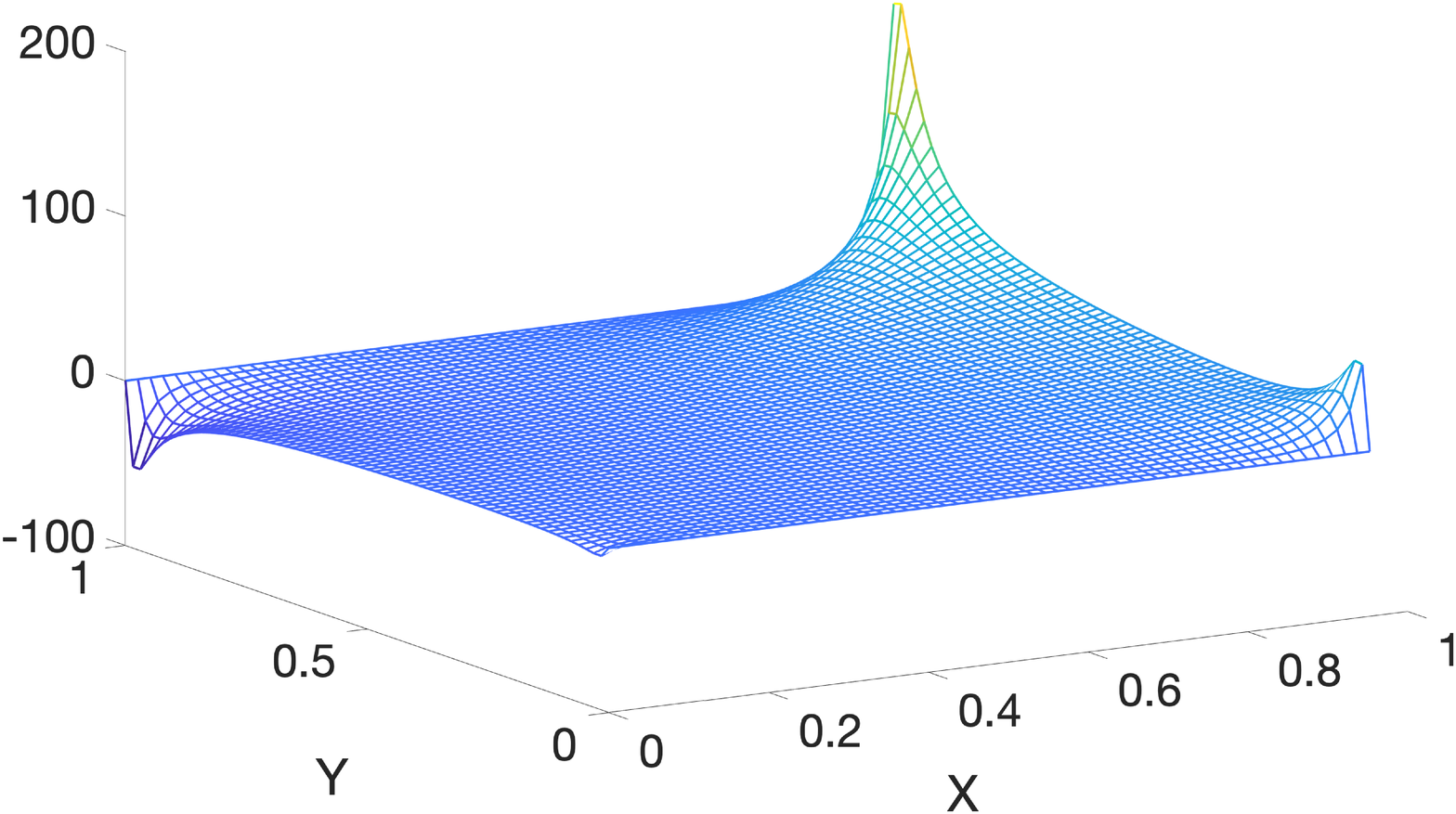} &
		\includegraphics[width=0.45\textwidth,clip]{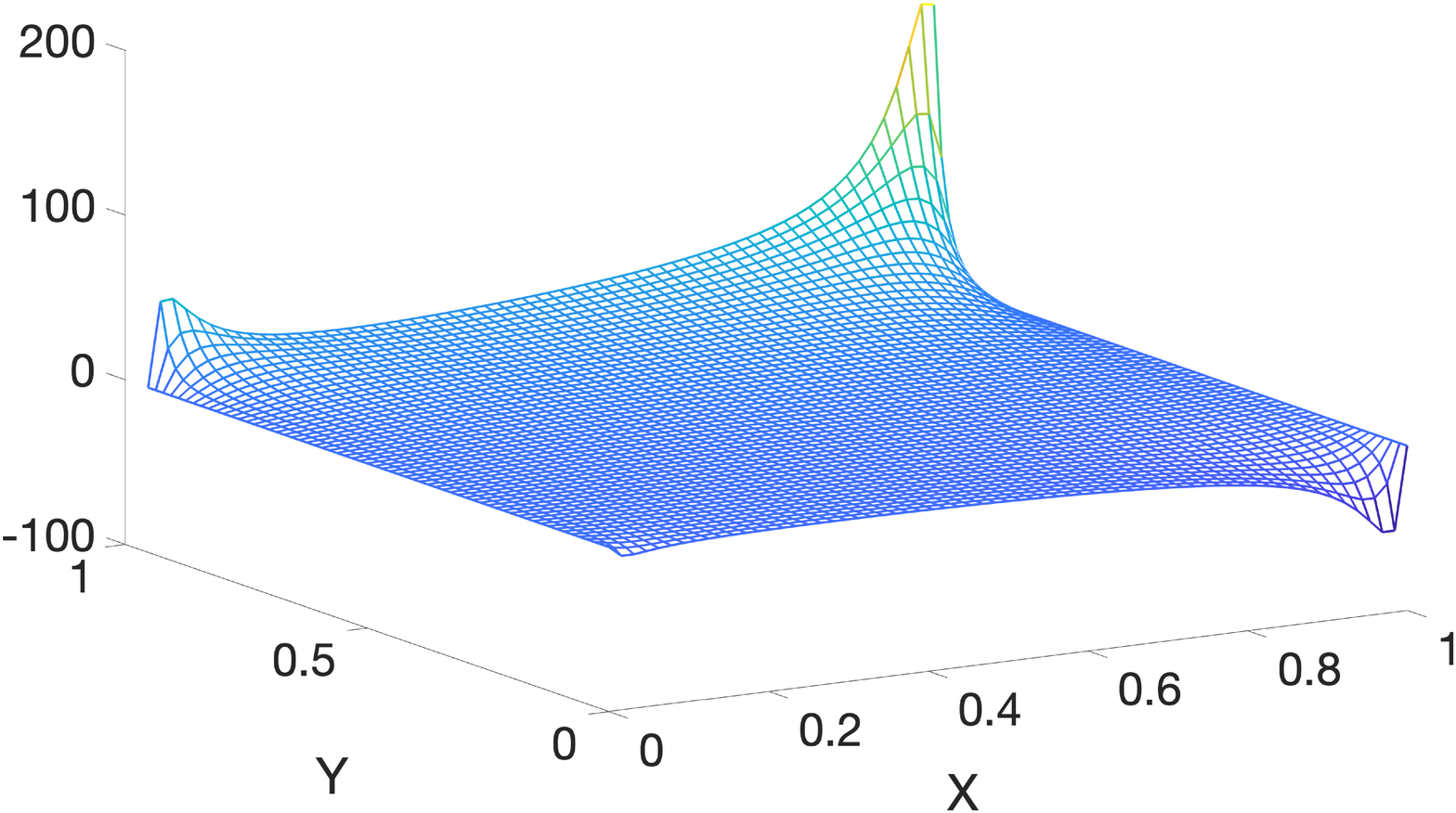}
	\end{tabular}
	\caption[Third order differences]{The derivatives $\frac{\partial^3}{\partial x^3}$ $\frac{\partial^3}{\partial y^3}$ (left and right columns, respectively) in the interval $[0,1]\times[0,1]$ of the solutions to the problems `MINS' \cref{eq:minimal} and `MOREBV' \cref{eq:morebv} (top and bottom rows).
		The solutions were computed on a $65\times 65$ uniform grid up to machine precision, solving the minimization problem with the direct use of \texttt{fminunc}.
	} \label{fig:C3}
\end{figure}

\begin{figure}[!h]
	\centering
	\begin{tabular}{cc} 
		\includegraphics[width=0.45\textwidth,clip]{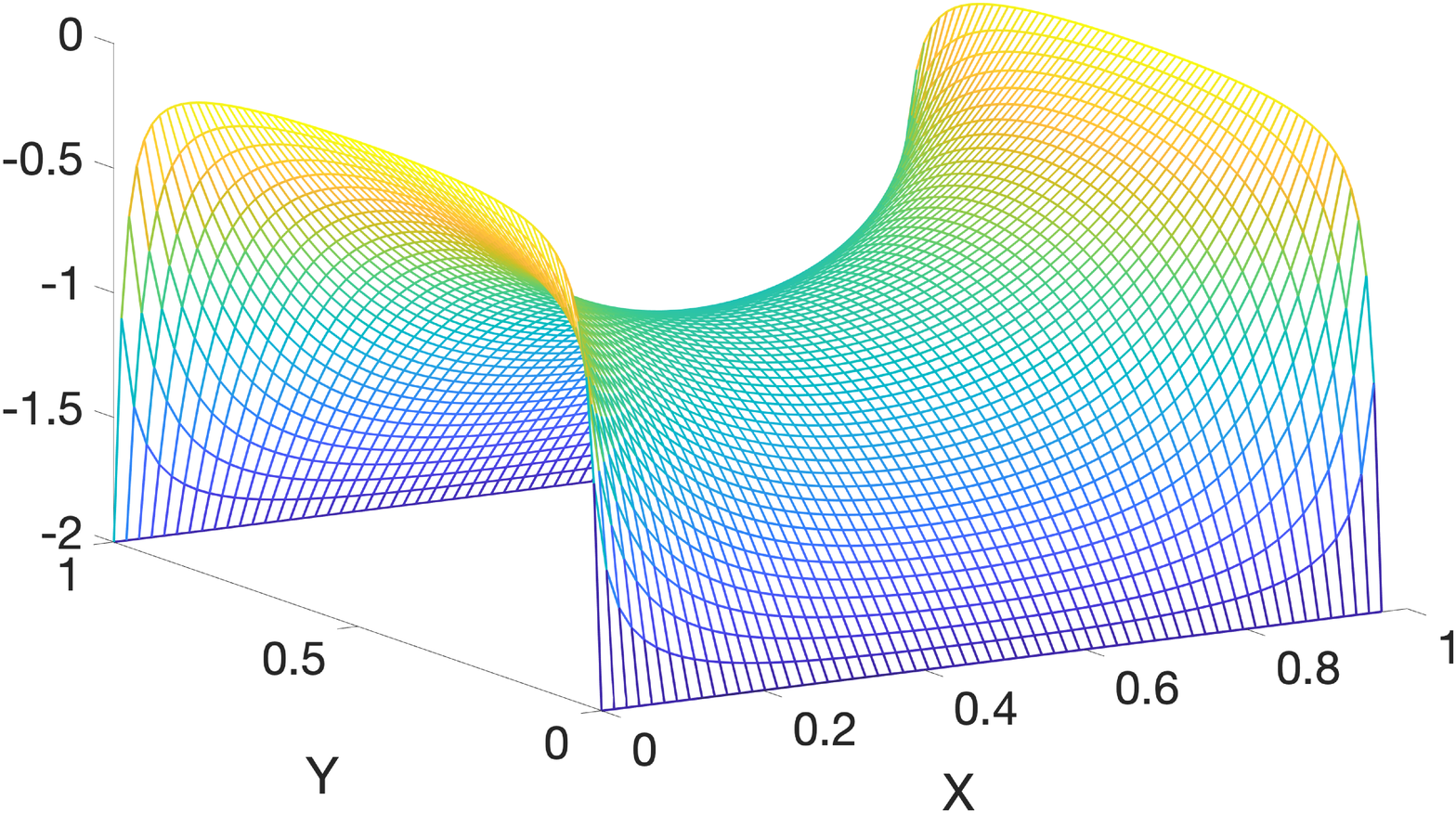} &
		\includegraphics[width=0.45\textwidth,clip]{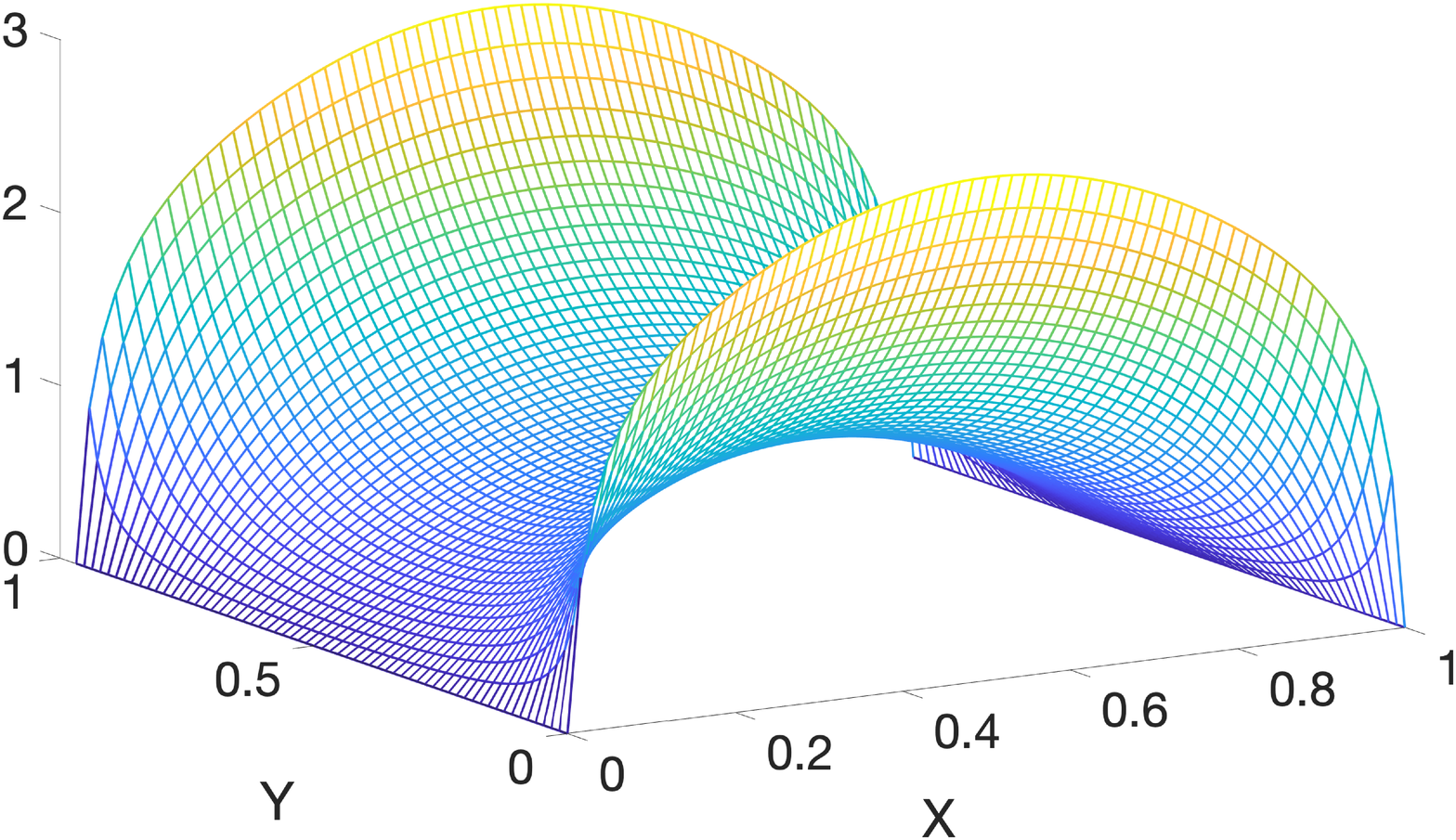} \\
		\includegraphics[width=0.45\textwidth,clip]{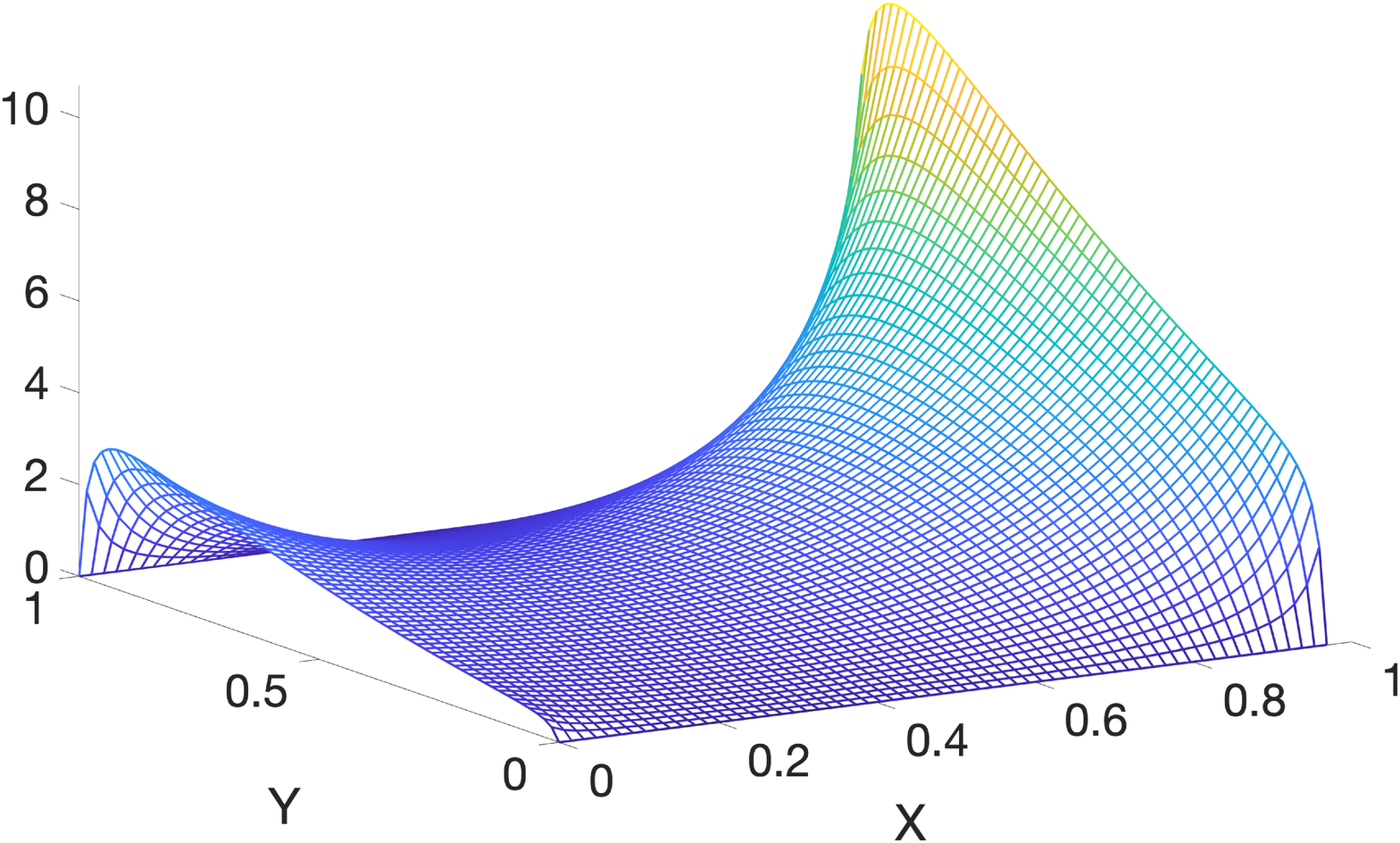} &
		\includegraphics[width=0.45\textwidth,clip]{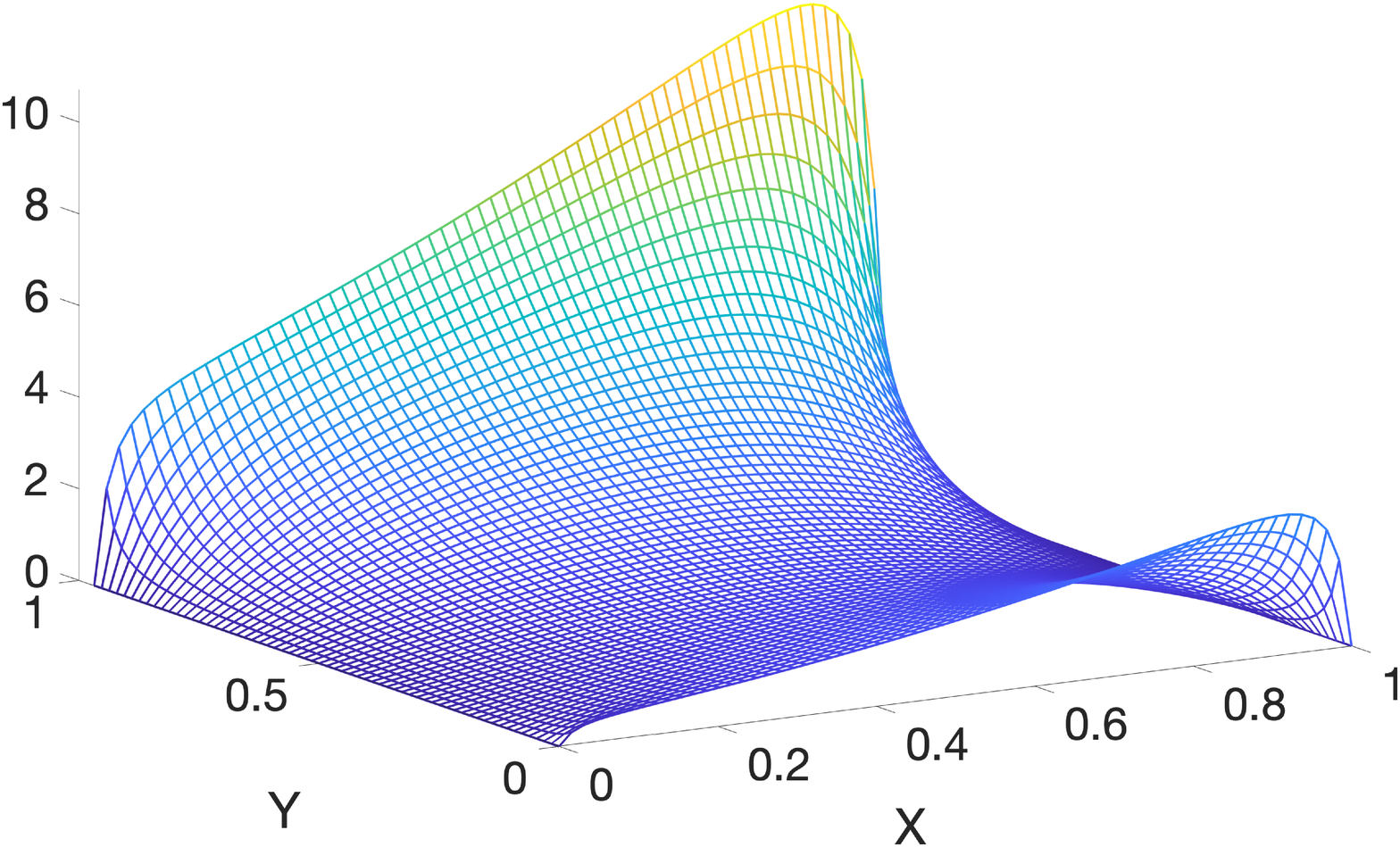}
	\end{tabular}
	\caption[Second order differences]{The derivatives $\frac{\partial^2}{\partial x^2}$ $\frac{\partial^2}{\partial y^2}$ (left and right columns, respectively) in the interval $[0,1]\times[0,1]$ of the solutions to the problems `MINS' \cref{eq:minimal} and `MOREBV' \cref{eq:morebv} (top and bottom rows).
		The solutions were computed on a $65\times 65$ uniform grid up to machine precision, solving the minimization problem with the direct use of \texttt{fminunc}.
	} \label{fig:C2}
\end{figure}

\section{Sub-optimal solutions of the MOREBV problem}

In this section we show the sub-optimal solutions found by MR/OPT when solving the MOREBV problem as in \cref{sec:nonquadratic}.

Observe in \cref{fig:suboptimalsn1} that, for the linear prediction operators in \cref{sec:1D-int} with $n=1$, how slowly the sub-optimal solutions approaches the solution (pay attention to the z-axis). The low regularity of the data generated by this prediction operators may explain this behavior, since the continuous optimization problem requires the computation of a Laplacian.

However, we can see in \cref{fig:suboptimalsn3,fig:suboptimalsn5} that for $n=3,5$ the sub-optimal solution converges quickly to $z_{\min}$.

\begin{figure}[!h]
	\centering
	\begin{tabular}{cc} 
		$z^{L,1}$ & $z^{L,2}$\\
		\includegraphics[width=0.45\textwidth,clip]{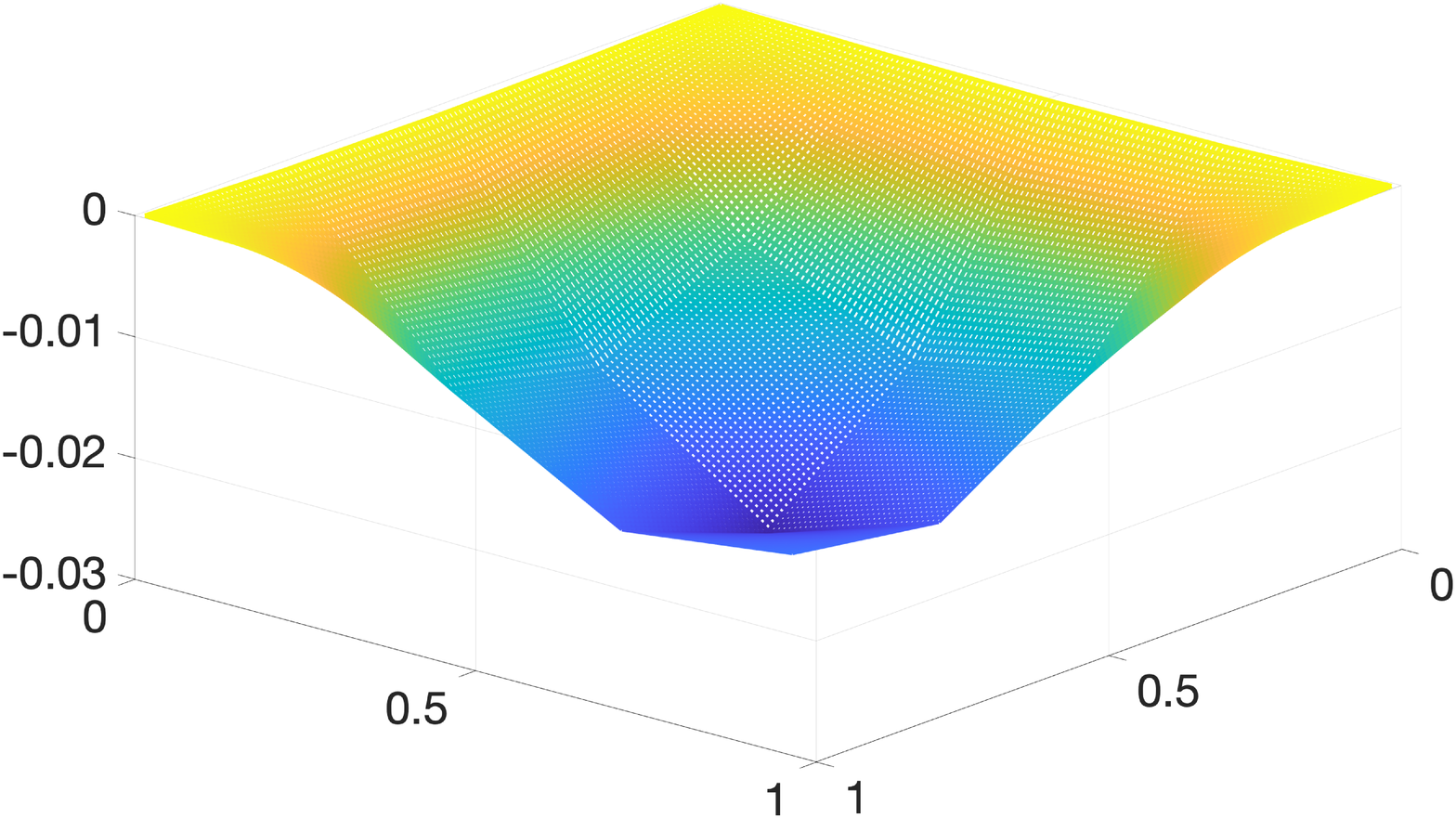} &
		\includegraphics[width=0.45\textwidth,clip]{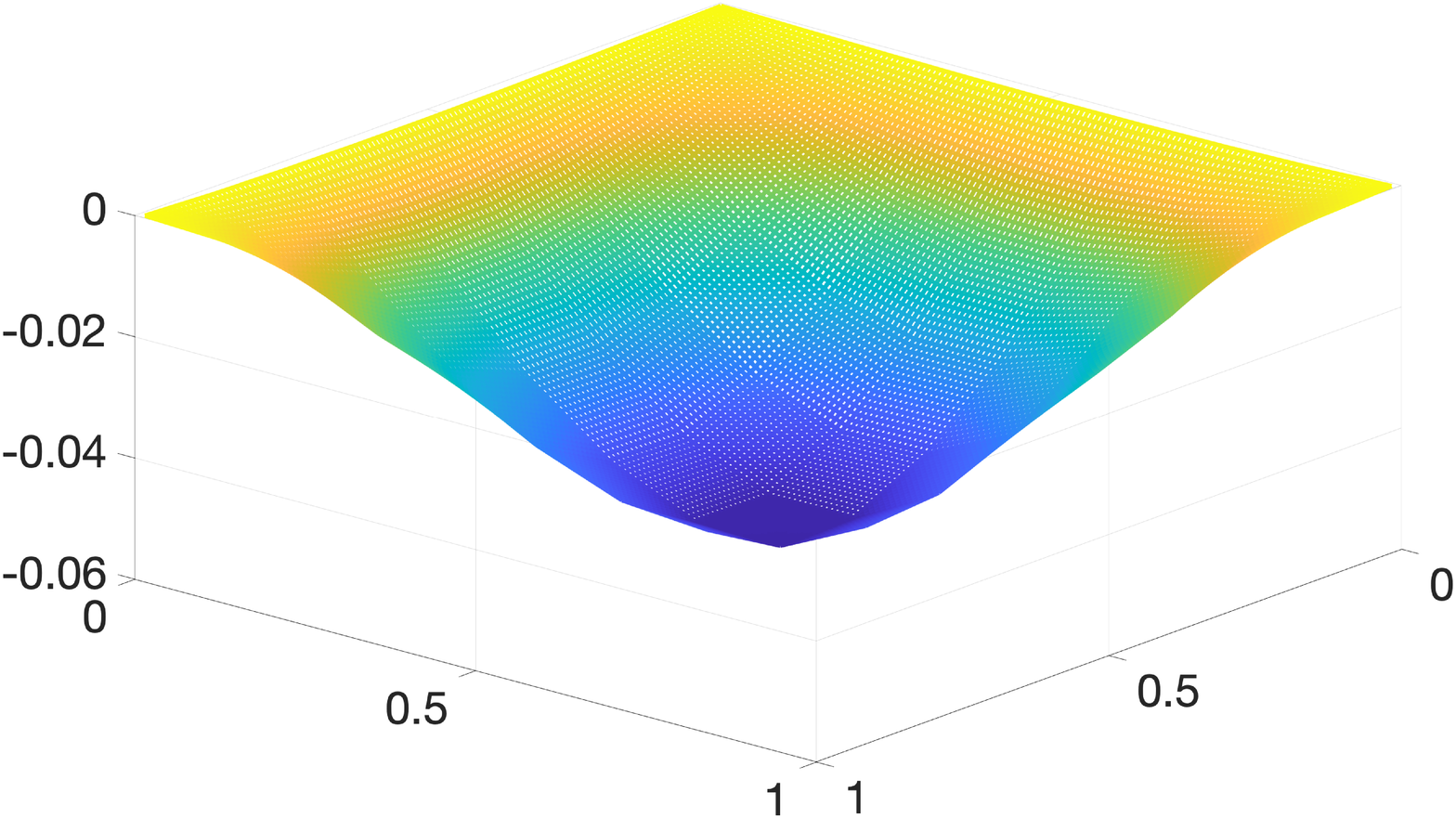} \\[10pt]
		$z^{L,3}$ & $z^{L,4}$\\
		\includegraphics[width=0.45\textwidth,clip]{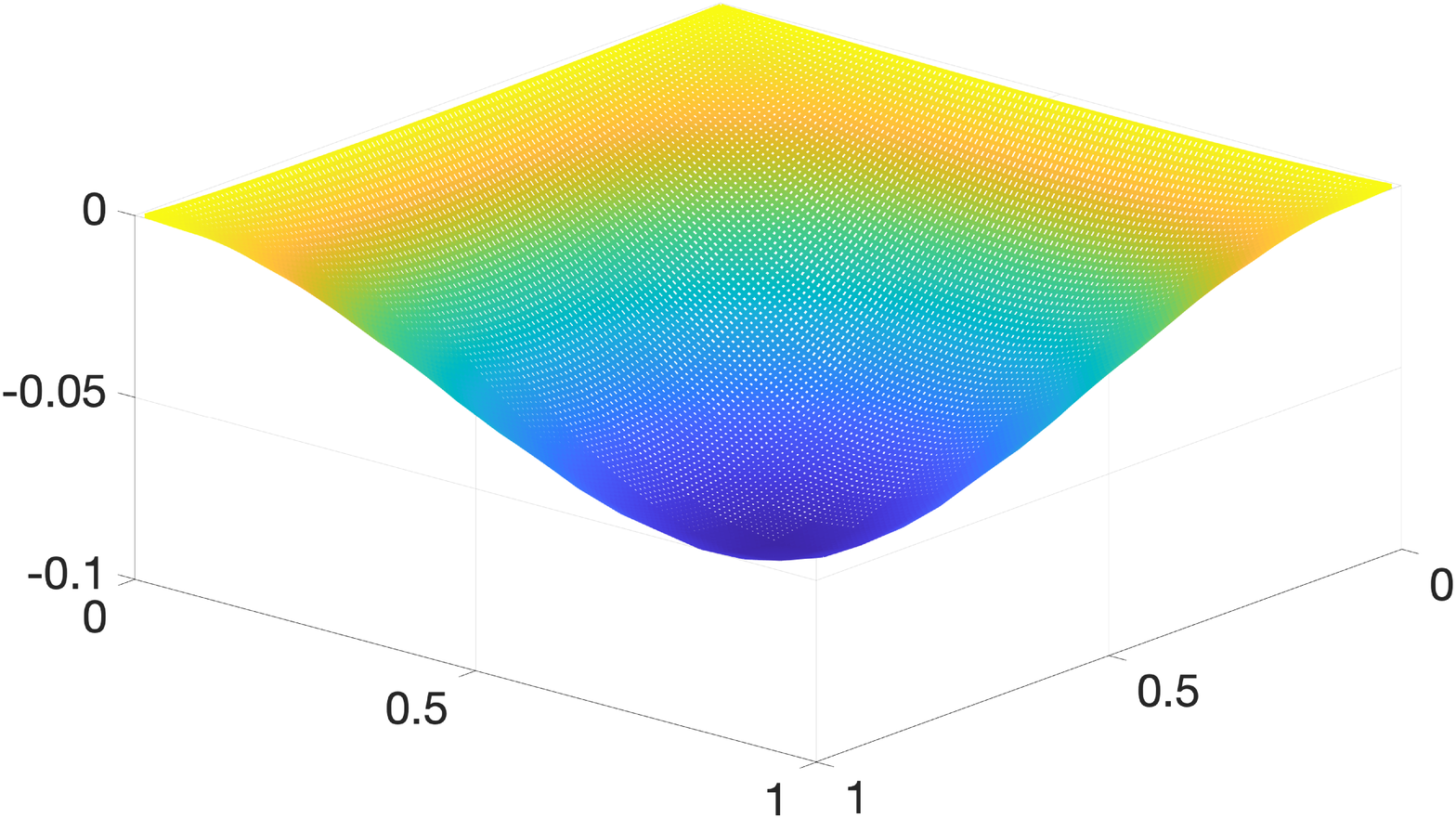} &
		\includegraphics[width=0.45\textwidth,clip]{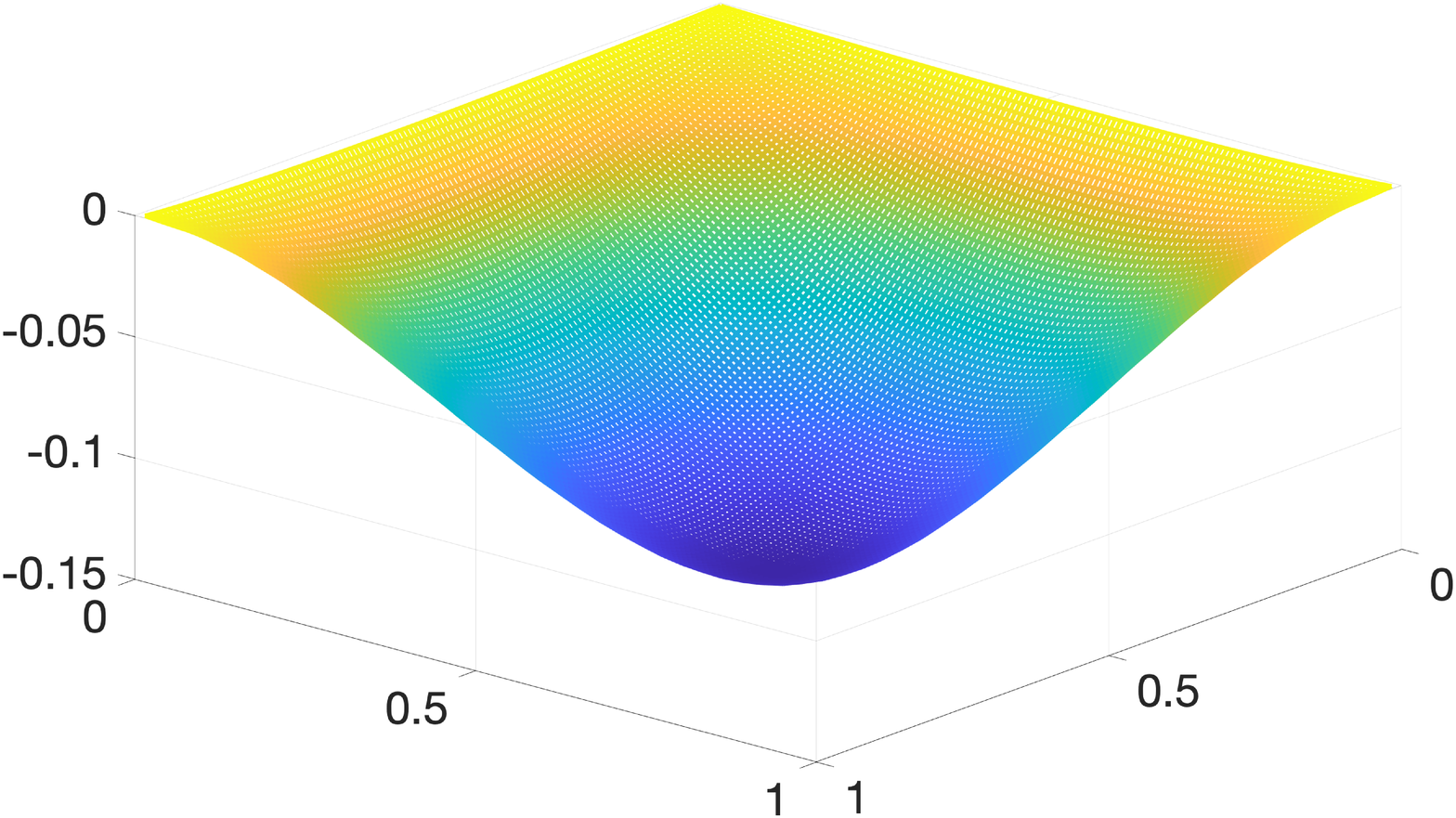} \\[10pt]
		$z^{L,5}$ & $z^{L,6}$\\
		\includegraphics[width=0.45\textwidth,clip]{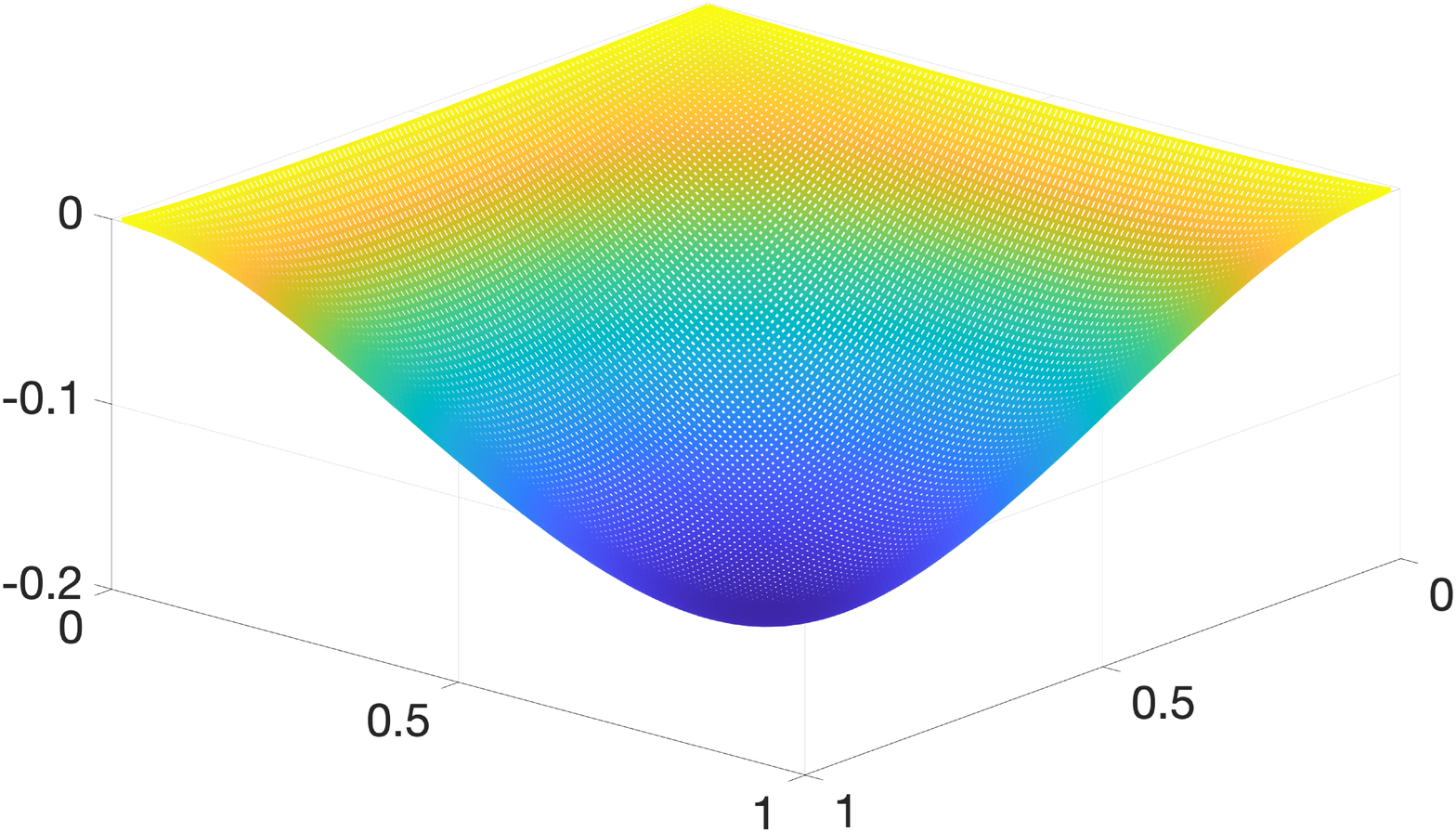} &
		\includegraphics[width=0.45\textwidth,clip]{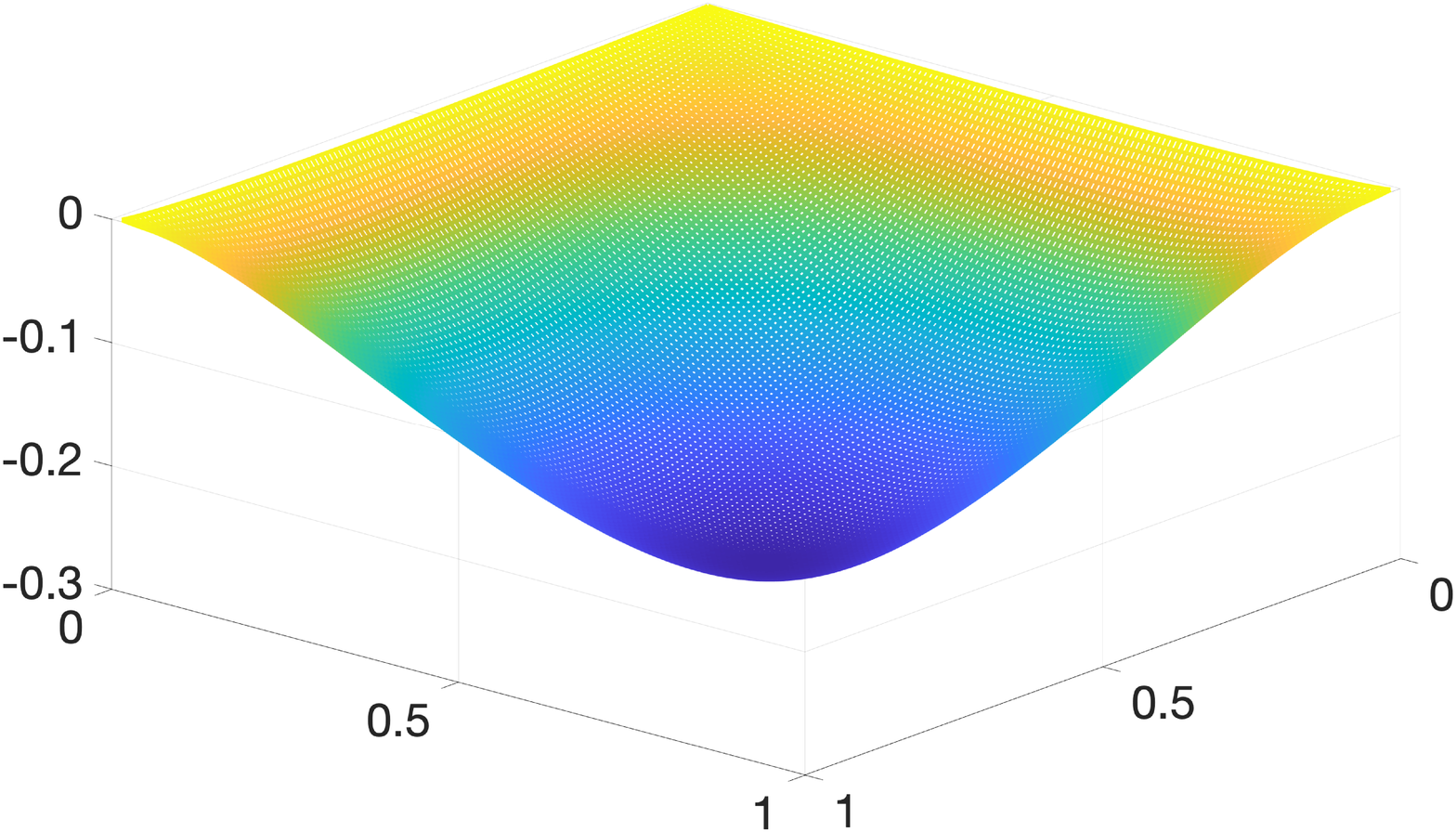}
	\end{tabular}
	\caption[Sub-optimals for MOREBV and $n=1$]{The suboptimal solutions to the problem `MOREBV' \cref{eq:morebv} for $n=1$.
	} \label{fig:suboptimalsn1}
\end{figure}

\begin{figure}[!h]
	\centering
	\begin{tabular}{cc} 
		$z^{L,1}$ & $z^{L,2}$\\
		\includegraphics[width=0.45\textwidth,clip]{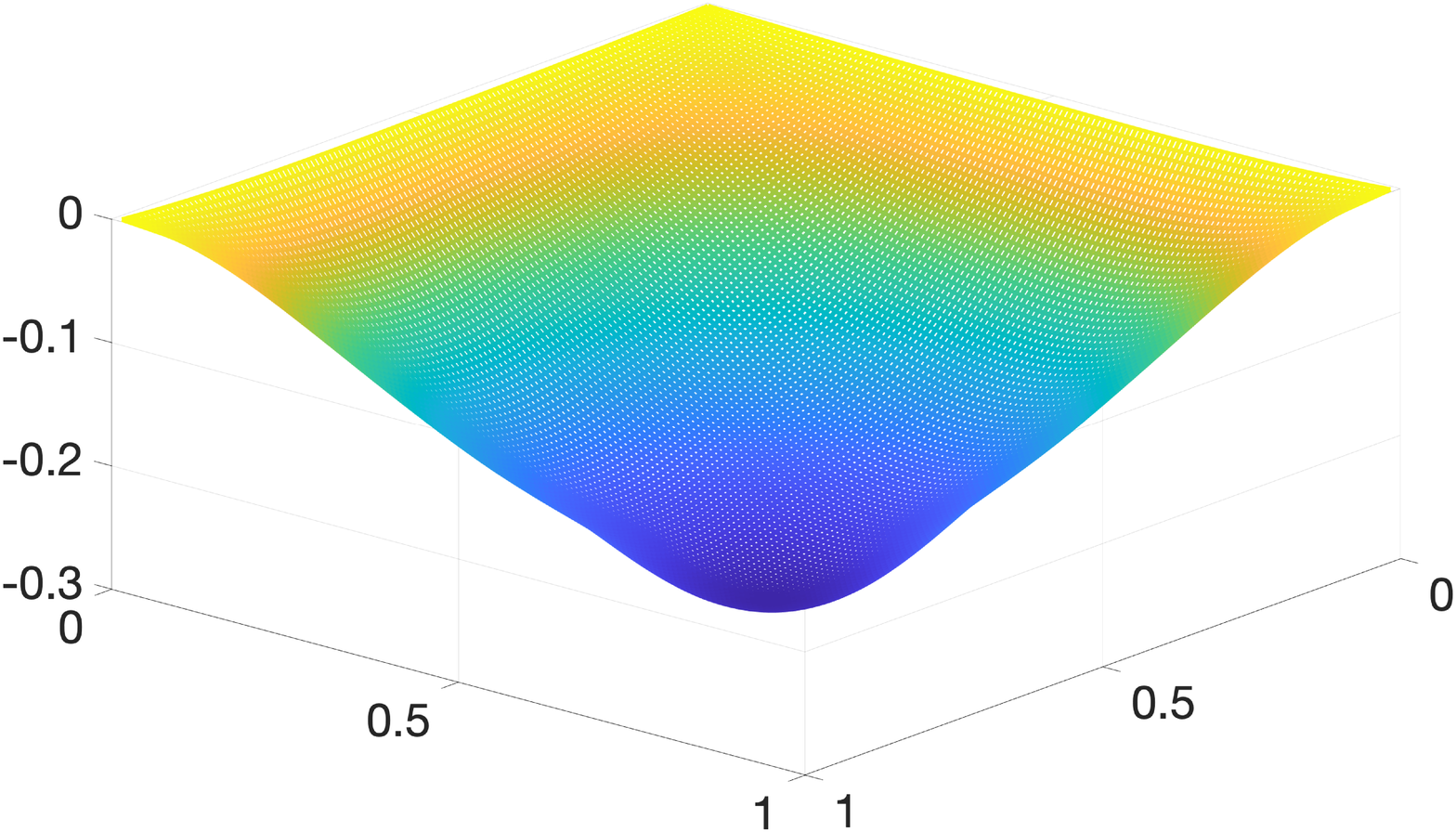} &
		\includegraphics[width=0.45\textwidth,clip]{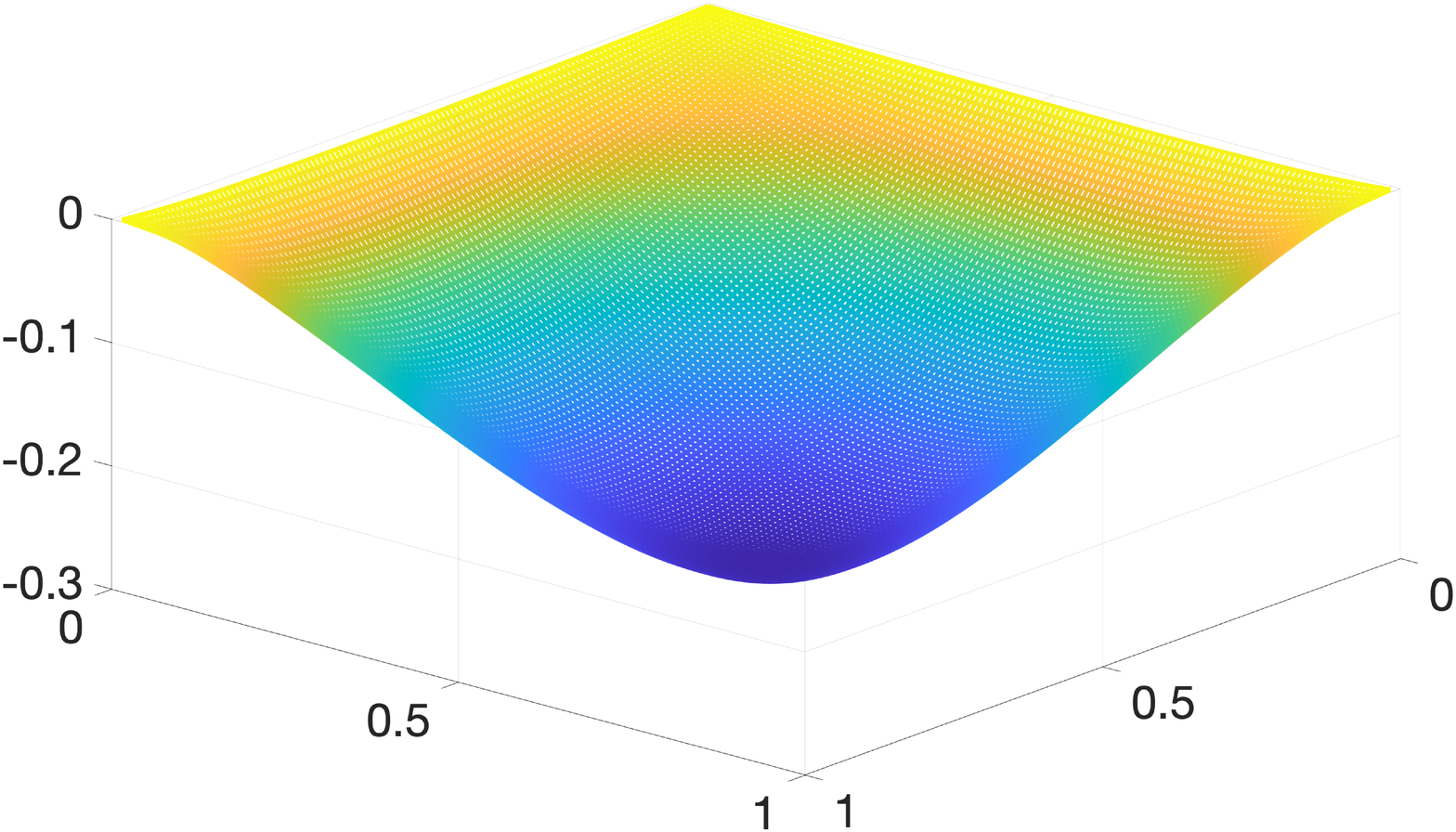} \\[10pt]
		$z^{L,3}$ & $z^{L,4}$\\
		\includegraphics[width=0.45\textwidth,clip]{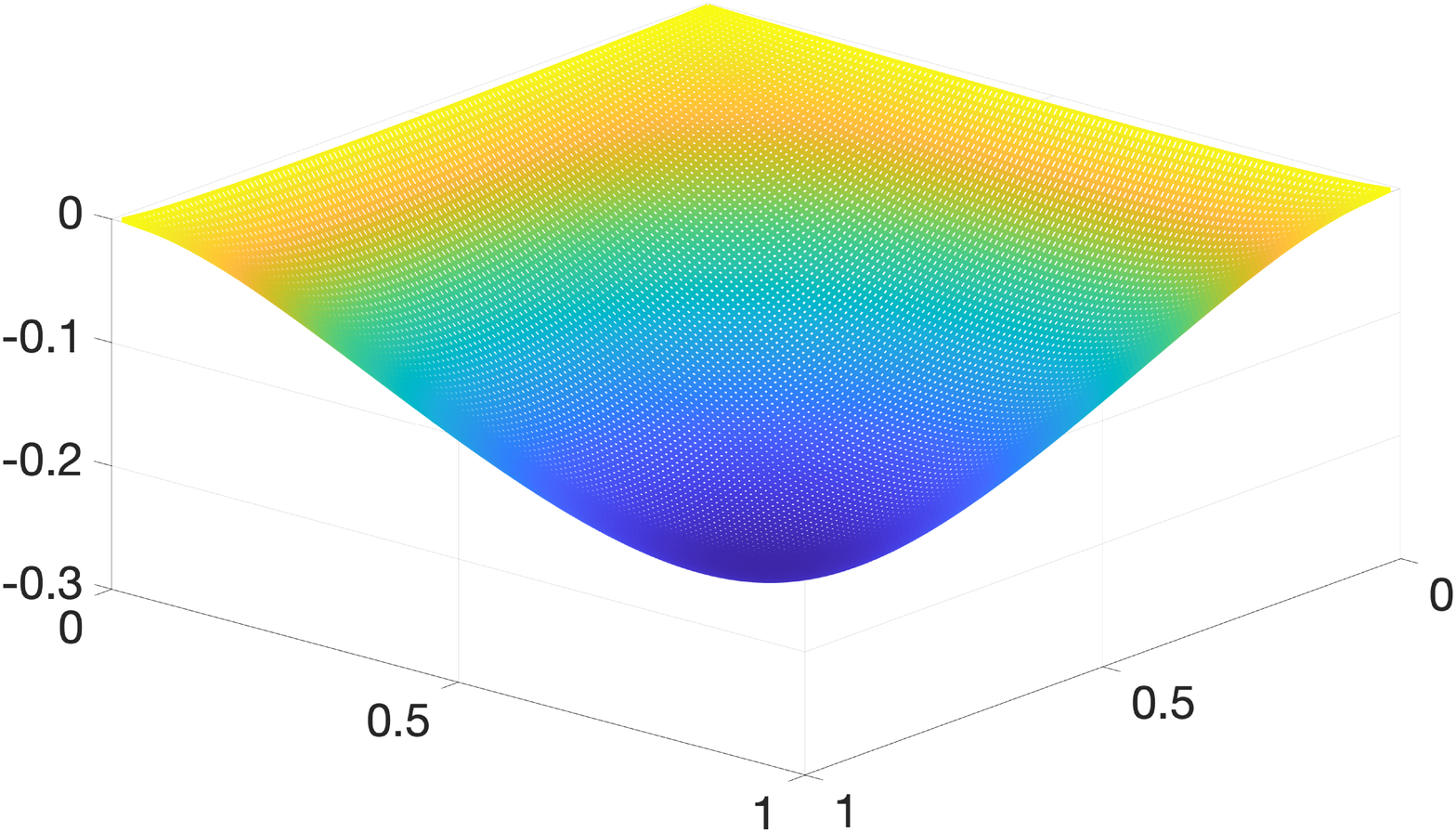} &
		\includegraphics[width=0.45\textwidth,clip]{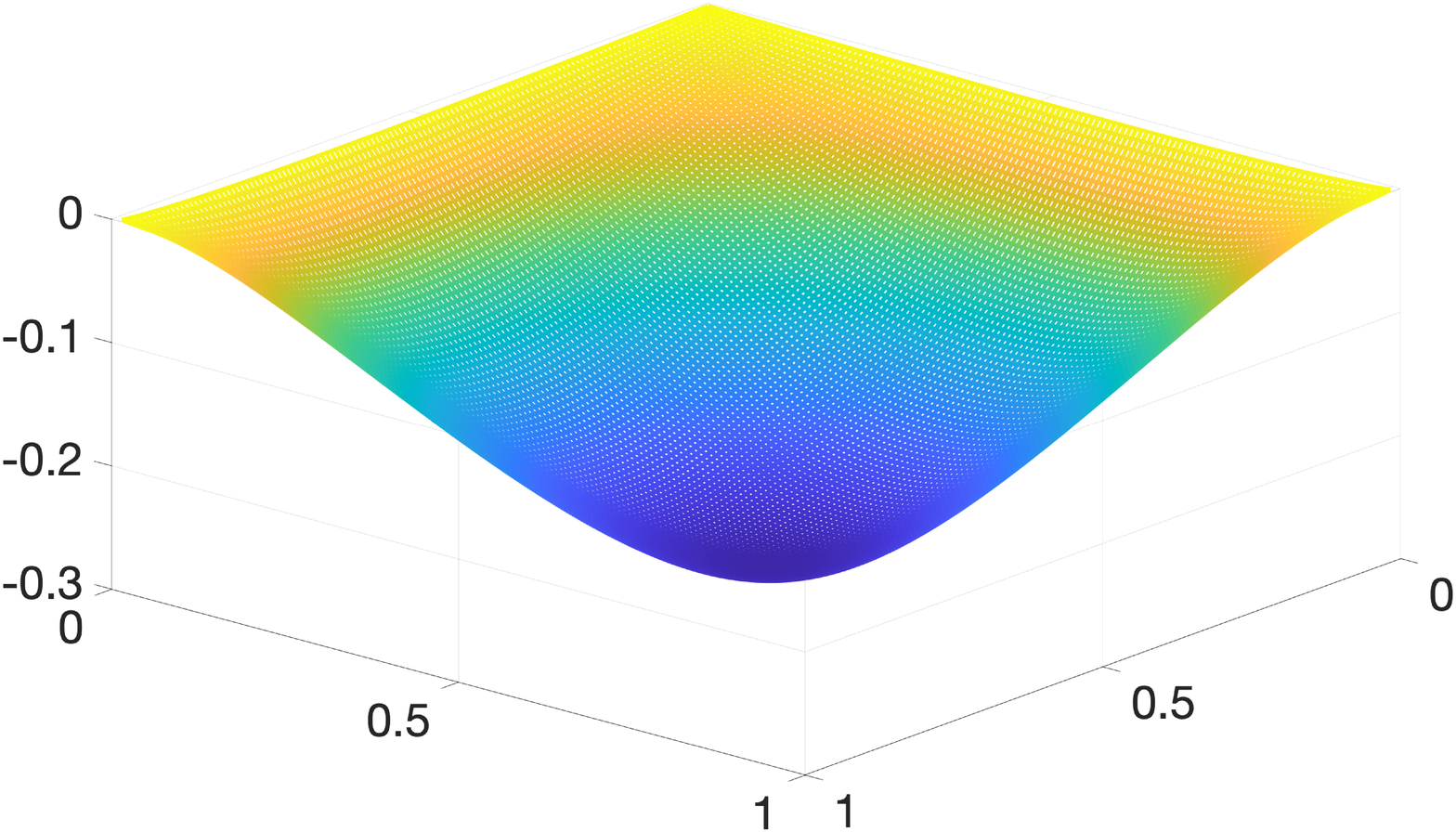}
	\end{tabular}
	\caption[Sub-optimals for MOREBV and $n=3$]{Some suboptimal solutions to the problem `MOREBV' \cref{eq:morebv} for $n=3$.
	} \label{fig:suboptimalsn3}
\end{figure}

\begin{figure}[!h]
	\centering
	\begin{tabular}{cc} 
		$z^{L,1}$ & $z^{L,2}$\\
		\includegraphics[width=0.45\textwidth,clip]{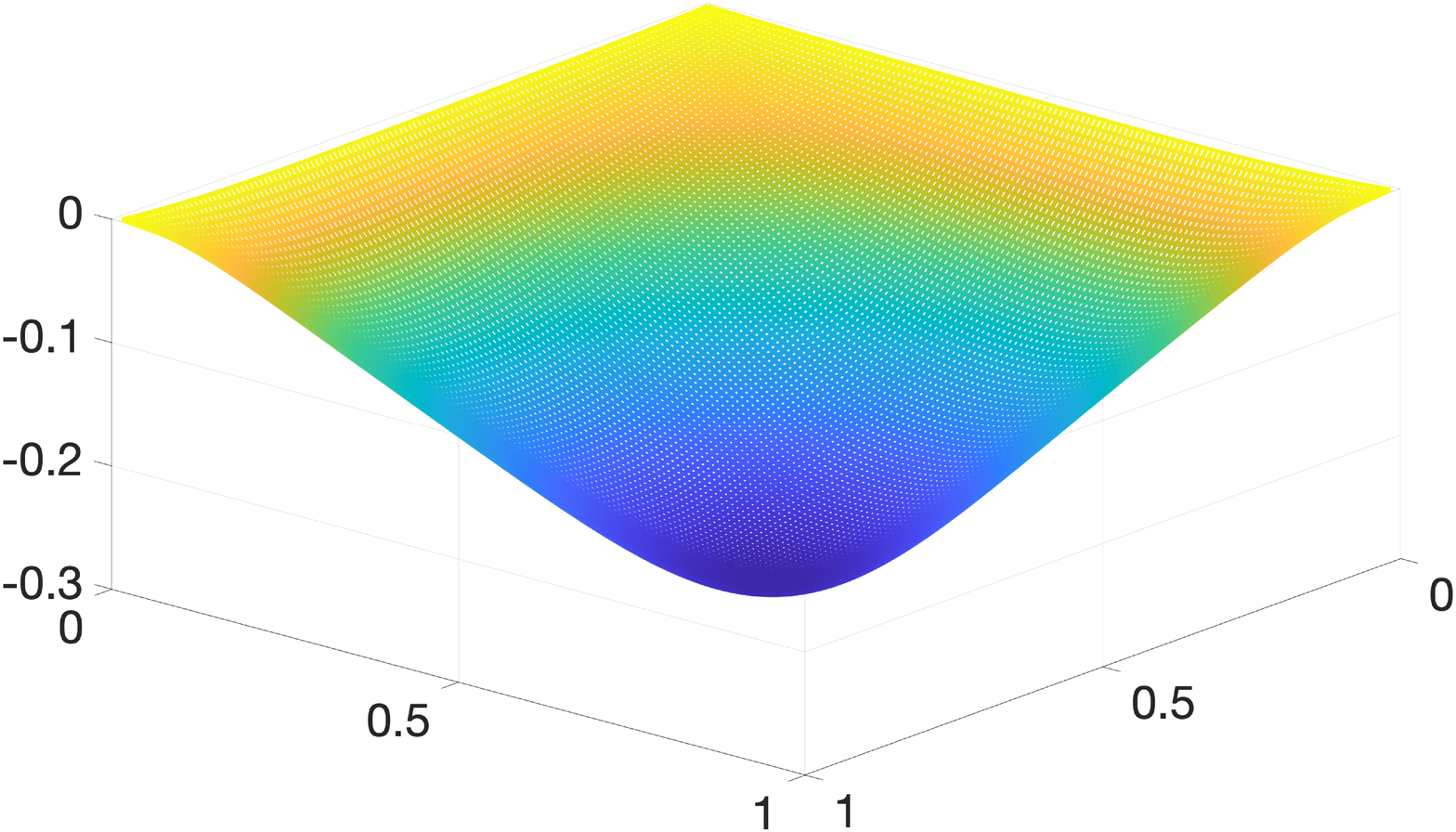} &
		\includegraphics[width=0.45\textwidth,clip]{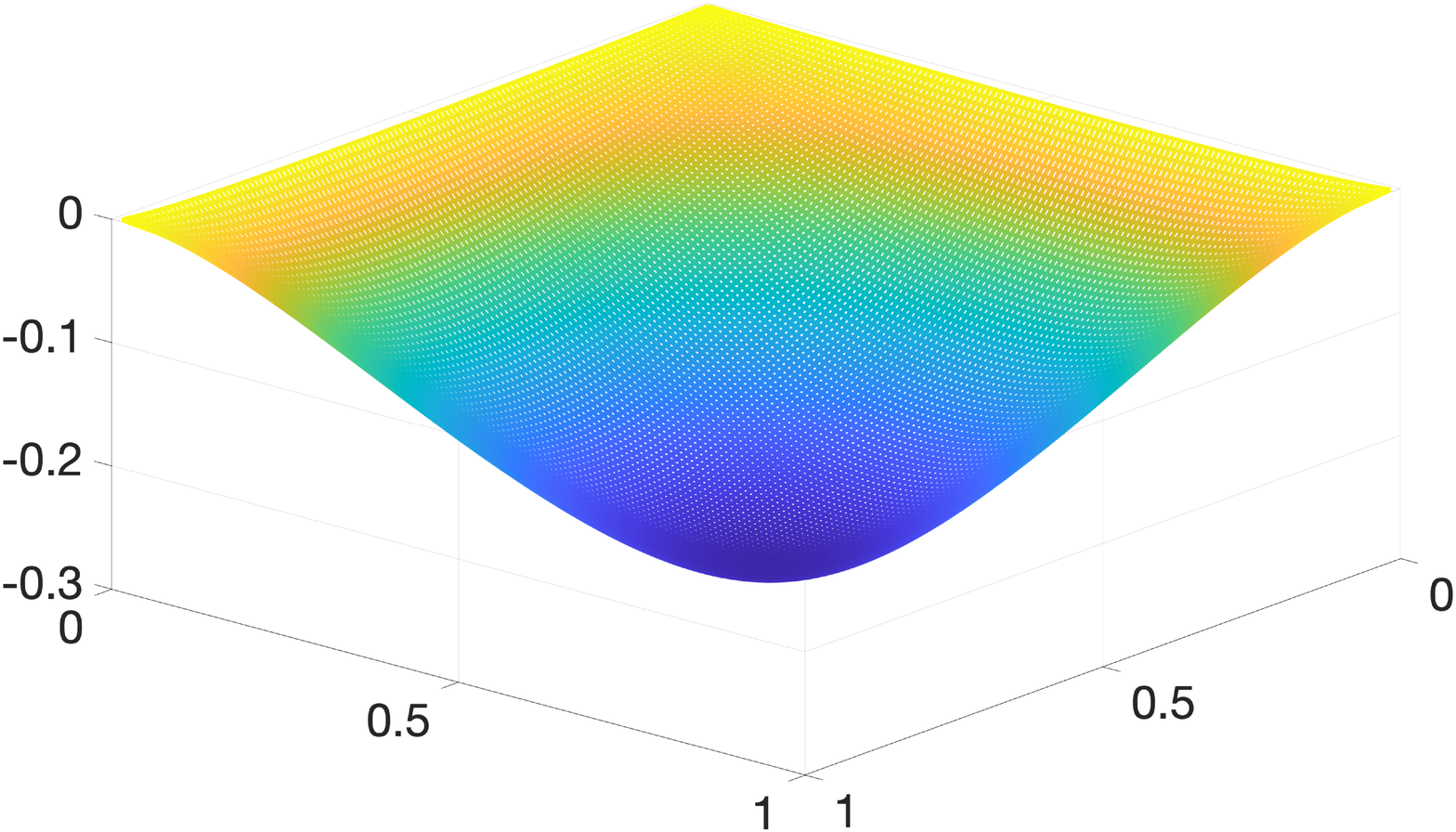} \\[10pt]
		$z^{L,3}$ & $z^{L,4}$\\
		\includegraphics[width=0.45\textwidth,clip]{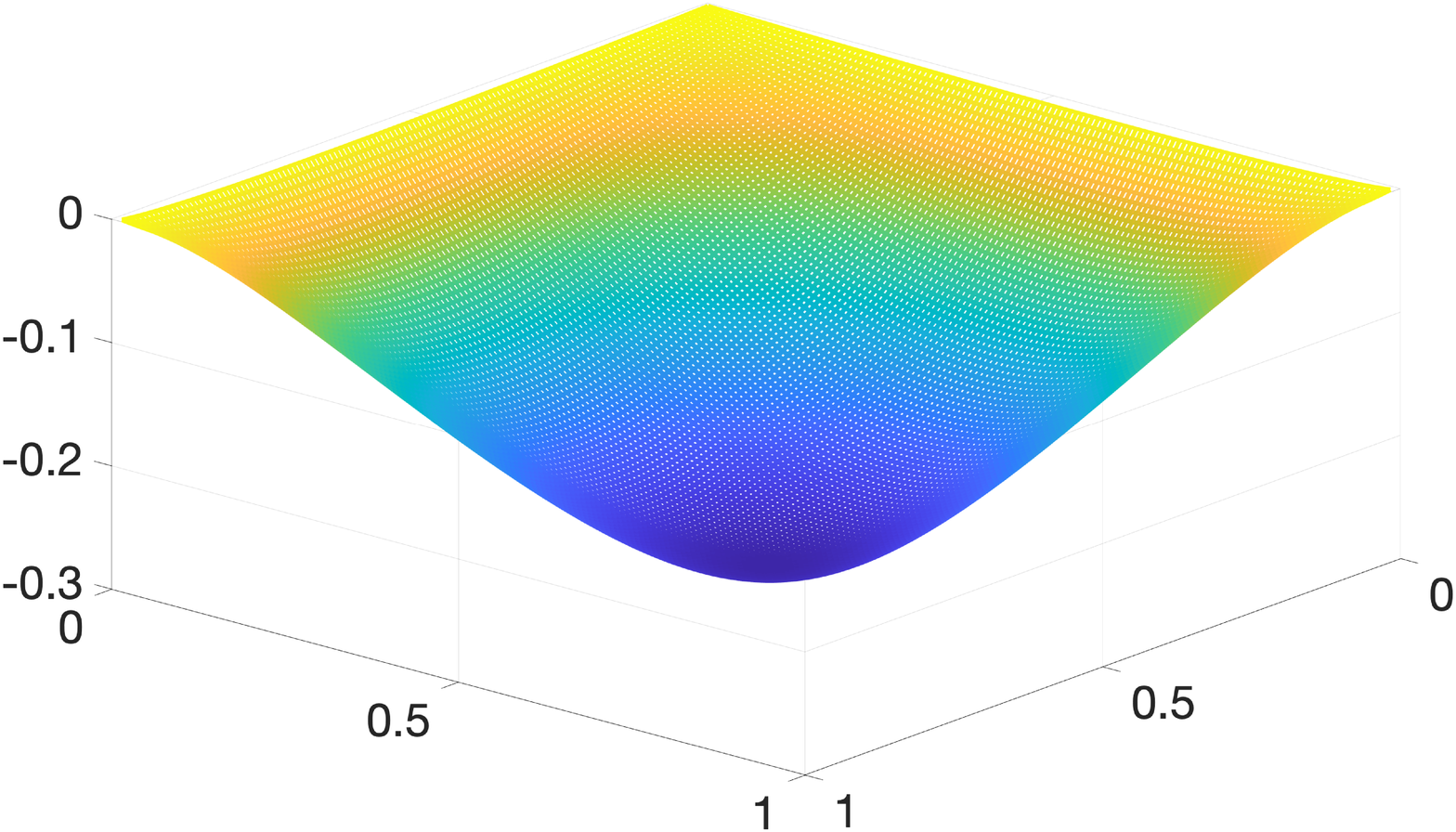} &
		\includegraphics[width=0.45\textwidth,clip]{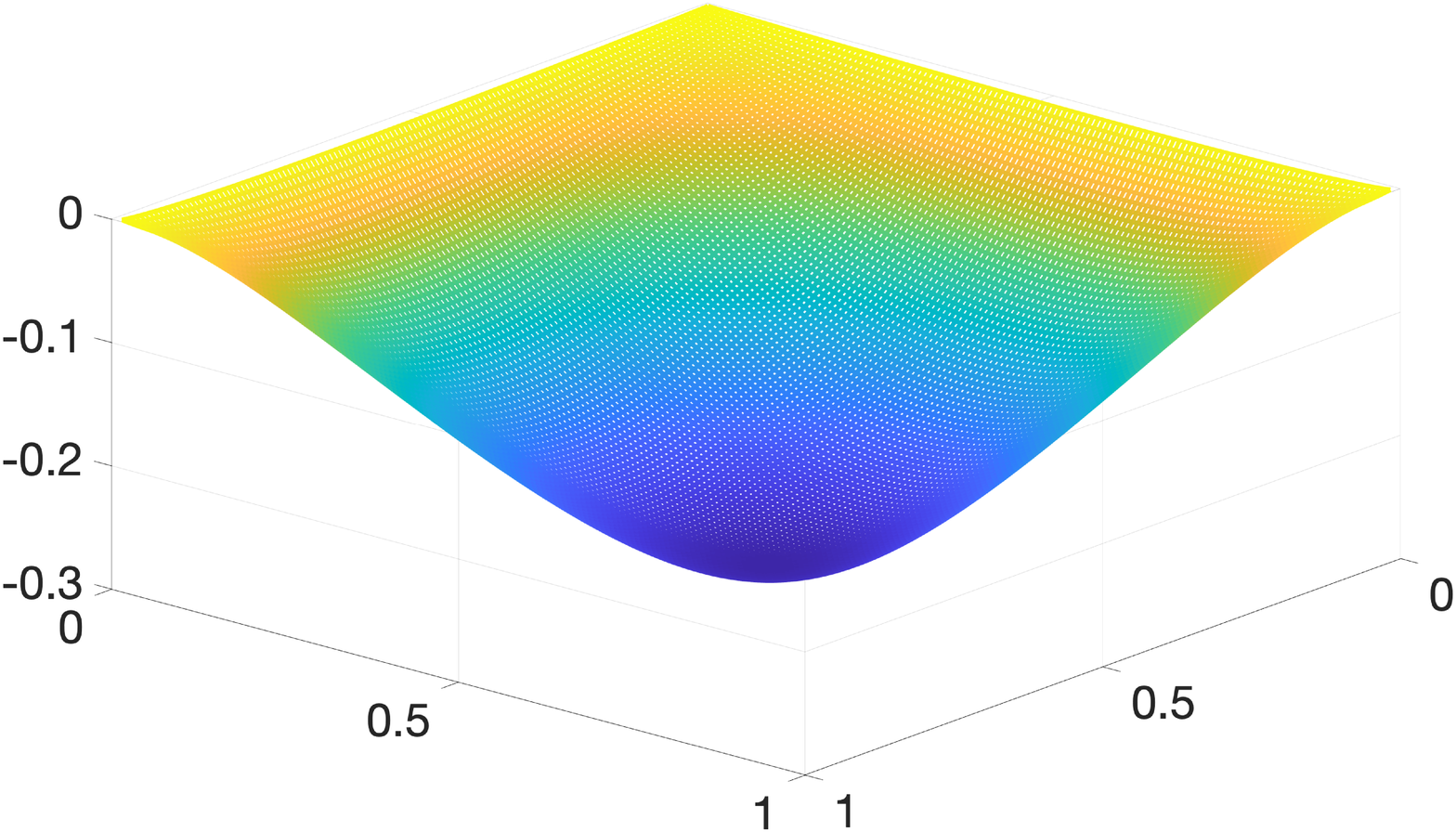}
	\end{tabular}
	\caption[Sub-optimals for MOREBV and $n=5$]{Some suboptimal solutions to the problem `MOREBV' \cref{eq:morebv} for $n=5$.
	} \label{fig:suboptimalsn5}
\end{figure}

\end{document}